\DeclareOldFontCommand{\sc}{\normalfont\scshape}{\@nomath\sc}
\newtheorem*{resumo}{Abstract}
\newtheorem{propp}{Proposition}
\newtheorem*{thmA}{Theorem A}
\newtheorem*{thmB}{Theorem B}
\newtheorem*{thmJou}{Theorem (Jouanolou)}
\newtheorem{theorem}{Theorem}
\newtheorem{pro}{Problem}
\newtheorem{thm}{Theorem}[section]
\newtheorem{lemma}[thm]{Lemma}
\newtheorem{cor}[thm]{Corollary}
\newtheorem{prop}[thm]{Proposition}
\newtheorem{OBS}[thm]{Remark}
\newtheorem{ex}[thm]{Example}
\newtheorem{dfn}[thm]{Definition}
\DeclareMathOperator{\pt}{pt}
\DeclareMathOperator{\corpo}{k}
\DeclareMathOperator{\Der}{Der}
\DeclareMathOperator{\mdc}{gcd}
\DeclareMathOperator{\codim}{codim}
\DeclareMathOperator{\sing}{sing}
\DeclareMathOperator{\od}{ord}
\DeclareMathOperator{\Pic}{Pic}
\DeclareMathOperator{\Div}{Div}
\DeclareMathOperator{\Num}{Num}
\DeclareMathOperator{\h}{H^{0}}
\DeclareMathOperator{\spm}{Spm}
\DeclareMathOperator{\Projdois}{\mathbb{P}_{k}^{2}}
\DeclareMathOperator{\Projum}{\mathbb{P}_{k}^{1}}
\DeclareMathOperator{\Projumprod}{\mathbb{P}_{k}^{1}\times\mathbb{P}_{k}^{1}}
\title{Foliations on smooth algebraic surfaces over positive characteristic}
\author{Wodson  Mendson}
\def\keywords{\xdef\@thefnmark{}\@footnotetext}
\begin{document}



\maketitle
\keywords{2010 \emph{Mathematics Subject Classification.} 32S65; 13N15; 13A35}%
    \keywords{\emph{Keywords.} Foliations; Foliations over positive characteristic; Frobenius morphism;  $p$-divisor}%
     \footnotetext{This work was developed at IMPA. W. Mendson acknowledges support of CNPq and Faperj}

\begin{resumo} \normalfont  We investigate the notion of the $p$-divisor for foliations on a smooth algebraic surface defined over a field of positive characteristic $p$ and we study some of their properties. We present a structure theorem for the $p$-divisor of foliations in the projective plane and the Hirzebruch surfaces where we show that, under certain conditions, such $p$-divisors are reduced.
\end{resumo}

\setcounter{tocdepth}{1}
\tableofcontents

\section{Introduction}

In his  monograph \cite{MR537038} Jouanolou showed that a very generic foliation of degree $d\geq2$ in the projective plane has no algebraic solutions. The crucial point of his argument consists in constructing examples, for each degree $d\geq2$, of foliations with no algebraic solutions. The term very generic means that there exists a countable union $F$ of closed sets in the space of holomorphic foliations of degree $d$ such that for any foliations $\mathcal{F}$ lying outside $F$ has no algebraic solutions.

In order to prove his theorem, Jouanolou showed the following result.

\begin{thmJou} For every $d \in \mathbb{Z}_{\geq 2}$ the foliation in $\mathbb{P}_{\mathbb{C}}^{2}$ defined by the vector field
     $$
        v_{d} = (xy^{d}-1)\frac{\partial}{\partial x}-(x^{d}-y^{d+1})\frac{\partial}{\partial y}
    $$
has no algebraic solutions.
\end{thmJou}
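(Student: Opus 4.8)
The plan is to prove this by reduction to positive characteristic, exploiting exactly the $p$-divisor machinery developed in this paper. First I would record that $v_d$ has integer coefficients, so the foliation $\mathcal{F}_d$ is defined over $\mathbb{Z}$; after inverting finitely many primes it extends to a foliation over $\mathrm{Spec}\,\mathbb{Z}[1/N]$ whose fibre over the generic point is $\mathcal{F}_d$ over $\mathbb{Q}$. If $\mathcal{F}_d$ admitted an algebraic solution over $\mathbb{C}$, then, since the locus of foliations of a fixed degree admitting an invariant curve of degree $m$ is cut out by equations with $\mathbb{Q}$-coefficients, there would already be an irreducible invariant curve $C$ of some degree $m$ defined over $\overline{\mathbb{Q}}$, hence over a number field $K$. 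Spreading $C$ out and reducing modulo a prime $\mathfrak{p}\mid p$ of good reduction (all but finitely many), one obtains an invariant curve $C_p$ of the reduced foliation $\mathcal{F}_{d,p}$ over $\overline{\mathbb{F}}_p$, of the same degree $m$. The whole point is that $m$ is now a fixed integer while $p$ is free to vary.

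The mechanism that traps $C_p$ is the following, which I would isolate as a lemma. In characteristic $p$ the $p$-th iterate $v^p$ of the generating vector field is again a derivation, and along any $\mathcal{F}$-invariant curve $D$ the vector field $v$ is tangent to $D$, so $v^p$, being a composition of $v$ with itself, is tangent to $D$ as well; hence $v\wedge v^p$ vanishes on $D$, i.e. every algebraic solution is a component of the $p$-divisor $\Delta_p(\mathcal{F})=(v\wedge v^p)_0$, provided $\mathcal{F}$ is not $p$-closed. Thus I must first check that $\mathcal{F}_{d,p}$ is not $p$-closed for infinitely many $p$: a $p$-closed foliation on $\mathbb{P}^2$ is a Frobenius pull-back and is therefore extremely special, so it suffices to verify $v\wedge v^p\not\equiv 0$, which I would do by a direct computation of the leading terms for a congruence class of primes. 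Granting this, $C_p$ is a component of $\Delta_p(\mathcal{F}_{d,p})$.

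Now I would play the degrees off against each other. Since $v\in H^0(\mathbb{P}^2,T_{\mathbb{P}^2}(d-1))$, one computes $v^p\in H^0(T_{\mathbb{P}^2}(p(d-1)))$ and hence $v\wedge v^p\in H^0(\mathcal{O}_{\mathbb{P}^2}((p+1)(d-1)+3))$, so $\deg\Delta_p(\mathcal{F}_{d,p})=(p+1)(d-1)+3$ grows linearly in $p$. If the $p$-divisor is irreducible, then $C_p\subseteq\Delta_p$ forces $C_p=(\Delta_p)_{\mathrm{red}}$ and therefore $m=\deg\Delta_p\to\infty$, contradicting that $m$ is fixed as soon as $p$ is large. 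This is where the structure theorem of the paper enters: the reducedness of $\Delta_p$, together with the large automorphism group $G$ of order $3(d^2+d+1)$ of the Jouanolou foliation (the cyclic symmetry $(X,Y,Z)\mapsto(Y,Z,X)$ of the homogeneous form $\Omega=(Y^{d+1}-X^dZ)\,dX-(XY^d-Z^{d+1})\,dY+(X^{d+1}-YZ^d)\,dZ$ together with the diagonal torus symmetries), which permutes the irreducible components of $\Delta_p$ and constrains their degrees, should pin the components down tightly enough to exclude a component of the fixed degree $m$ for infinitely many $p$.

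The main obstacle is precisely this last step: passing from reducedness of $\Delta_p$ to genuine control of its irreducible components. Reducedness alone does not preclude $\Delta_p$ from splitting into many low-degree pieces whose total degree grows with $p$, one of which could host the fixed curve $C_p$; to close the argument I must show that for infinitely many primes $p$ the $p$-divisor has no invariant component of bounded degree, ideally that $\Delta_p$ is irreducible, and this is where the symmetry group and the explicit form of $v\wedge v^p$ must be exploited. A secondary, more routine difficulty is the arithmetic bookkeeping: ensuring good reduction of $C$, that reduction preserves invariance and degree, and that the exceptional set of primes (bad reduction or $p$-closed behaviour) is finite so that infinitely many usable primes remain.
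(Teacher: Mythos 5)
You should first be aware that the paper does not prove this statement at all: it is quoted from Jouanolou's monograph \cite{MR537038}, where the proof is a direct characteristic-zero analysis of the singular scheme of $\mathcal{F}_d$ (eigenvalues at the $d^2+d+1$ singular points, the order-$3(d^2+d+1)$ symmetry group, and intersection-theoretic constraints on a putative invariant curve), with no reduction modulo $p$. The closest the paper comes to your strategy is Proposition \ref{aplicaI}, and your proposal is essentially that proposition re-derived: invariant curves sit inside $\Delta_{\mathcal{F}_p}$ (Proposition \ref{inv}), $\deg\Delta_{\mathcal{F}_p}=p(d-1)+d+2$, so irreducibility of $\Delta_{\mathcal{F}_p}$ forces a degree contradiction. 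One genuine simplification you missed: you do not need to fix the curve $C$ first and let $p$ vary over infinitely many primes. Since $\mathcal{F}_d$ is non-dicritical, Carnicer's theorem \cite{MR1298714} bounds any invariant curve by $\deg C\leq d+2$ \emph{a priori}, so a single prime with $\Delta_{\mathcal{F}_p}$ irreducible suffices (this is exactly how Proposition \ref{aplicaI} is organized; Proposition \ref{aplicaII} is the variant where the bound comes from Camacho--Sad indices and one needs $p$ large relative to that bound to rule out $p$-th power factors in the reduction of $C$).

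The genuine gap is the one you flagged yourself, and it is not a ``last step'' but the entire mathematical content: nothing in the paper, and nothing in your sketch, establishes irreducibility of $\Delta_{\mathcal{F}_{d,p}}$ for even one suitable prime when $d\geq 2$ is arbitrary. Theorem \ref{casoplano} is a genericity statement about the open set $U(\mathbb{P}^2_{\corpo},\mathcal{O}(1),N)$ and says nothing about the specific Jouanolou foliation; and reducedness, even if you verified it for $\mathcal{F}_{d,p}$, is useless here, since a reduced $\Delta_p$ may split into roughly $p/m$ invariant components of bounded degree $m$, any one of which could host $C_p$. Your hope that the symmetry group rescues this does not work as stated: the group has order $3(d^2+d+1)$, which is bounded independently of $p$, so its orbits on components are bounded and it cannot force component degrees to grow with $p$; excluding a bounded-degree invariant component requires new input (this is presumably why the paper claims its method only for degrees two and three, where the irreducibility of $\Delta_{\mathcal{F}_p}$ can be checked by explicit computation for a chosen prime, as in the degree-two example following Proposition \ref{inv}-style calculations). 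A secondary unfinished step is non-$p$-closedness of $\mathcal{F}_{d,p}$: your plan to compute $v\wedge v^p$ directly is plausible (or one can exhibit a $p$-reduced singularity and invoke Lemma \ref{intlocal}), but it is not carried out. As it stands your proposal reduces Jouanolou's theorem to an irreducibility claim that is open within this paper's framework, so it is not a proof; the actual proof must either supply that irreducibility or follow Jouanolou's original characteristic-zero route.
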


Recently, a similar problem was considered in \cite{lizarbe2021density} for Hirzebruch surfaces. By assuming certain conditions in the normal bundle it is shown that a foliation in the Hirzebruch surfaces has no algebraic solutions (see \cite[Theorem C]{lizarbe2021density}). 

Interestingly, the analogue of the Jouanolou's theorem over a field of positive characteristic is completely false (see \cite{MR1904485}). The results established in \cite{MR1904485} imply the following proposition.

\begin{propp}\label{JVP} Let $\corpo$ be an algebraically closed field of characteristic $p>0$. Let $\mathcal{F}$ be a foliation on $\mathbb{P}_{\corpo}^{2}$ and suppose that $\deg(\mathcal{F})<p-2$. Then $\mathcal{F}$ has an invariant algebraic curve.
\end{propp}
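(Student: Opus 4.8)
The plan is to exploit the restricted $p$-th power operation on vector fields, which has no characteristic-zero analogue. Working in an affine chart, I would represent $\mathcal{F}$ by a polynomial vector field $v = a\,\partial_x + b\,\partial_y$ with $\gcd(a,b)=1$; since $\operatorname{char}\corpo = p$, the iterate $v^{[p]}:=v^{p}$ is again a derivation, and as differential operators $v$ and $v^{[p]}$ commute, so $[v,v^{[p]}]=0$. The first step is to check that the bivector $v\wedge v^{[p]}$ is globally well defined even though $v^{[p]}$ alone is not: under a change of trivialisation $v\mapsto \lambda v$ one has $(\lambda v)^{[p]}=\lambda^{p}v^{[p]}+\bigl((\lambda v)^{p-1}(\lambda)\bigr)v\equiv \lambda^{p}v^{[p]}\pmod{\mathcal{O}\cdot v}$, so the correction lies along $v$ and dies in the wedge. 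Hence $v\wedge v^{[p]}$ transforms by $\lambda^{p+1}$ and descends to a section of $\wedge^{2}T_{\mathbb{P}^{2}}\otimes K_{\mathcal{F}}^{\otimes(p+1)}\cong\mathcal{O}\bigl((d-1)p+d+2\bigr)$, whose zero scheme is the $p$-divisor $\Delta$.

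Next I would split on whether $\Delta=0$. Suppose first $v\wedge v^{[p]}\not\equiv 0$. Then $\Delta$ is a nonzero effective divisor, hence contains a curve, and the crucial point is that it is $\mathcal{F}$-invariant. Writing $v\wedge v^{[p]}=W\,\partial_x\wedge\partial_y$ locally, the identity
\[
L_v\bigl(v\wedge v^{[p]}\bigr)=[v,v]\wedge v^{[p]}+v\wedge[v,v^{[p]}]=0,
\]
combined with $L_v(\partial_x\wedge\partial_y)=-\operatorname{div}(v)\,\partial_x\wedge\partial_y$, forces $v(W)=\operatorname{div}(v)\,W$; thus $\{W=0\}$ is invariant with cofactor $\operatorname{div}(v)$, and $\Delta$ is an invariant algebraic curve.

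If instead $v\wedge v^{[p]}\equiv 0$, then $v^{[p]}=g\,v$ for a regular function $g$ (here $\gcd(a,b)=1$ is used), i.e. $\mathcal{F}$ is $p$-closed. In this case I would invoke the correspondence between $p$-closed foliations and purely inseparable quotients: there is a finite purely inseparable morphism $\pi\colon\mathbb{P}^{2}\to Y$ of degree $p$ onto a normal surface with $\mathcal{F}=\ker(d\pi)$. Since $v$ kills functions pulled back from $Y$, the preimage $\pi^{*}D$ of a general curve $D\subset Y$ is an $\mathcal{F}$-invariant curve, settling this case.

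The hard part is not the formal invariance of $\Delta$ — which is automatic — but extracting from it a \emph{genuine} separatrix. Indeed, $v$ annihilates every $p$-th power, so any divisor of the form $p\,C=\{h(x^{p},y^{p})=0\}$ is formally invariant for every foliation; such non-reduced divisors are spurious, and a non-reduced $\Delta$ could in principle consist only of them. This is exactly where the hypothesis $\deg(\mathcal{F})=d<p-2$ is needed: it is the numerical condition under which the structure theorem of the paper guarantees that $\Delta$ is reduced. Once $\Delta=\{W=0\}$ is known to be reduced, $v(W)=\operatorname{div}(v)\,W$ shows that each irreducible factor of $W$ cuts out an honest $\mathcal{F}$-invariant curve, which is the desired conclusion. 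Thus I expect the real work to be the reducedness of the $p$-divisor under the degree bound, with the $p$-closed case disposed of separately by the quotient construction above.
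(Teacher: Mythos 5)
Your construction of $v\wedge v^{[p]}$ as a section of $\mathcal{O}_{\mathbb{P}^{2}}(p(d-1)+d+2)$, the Lie-derivative identity $v(W)=\operatorname{div}(v)\,W$, and the purely inseparable quotient in the $p$-closed case all match the paper's framework (the paper states Proposition \ref{JVP} as a consequence of the literature, but proves the essentially identical Corollary under the hypothesis $p\nmid d+2$, which $d<p-2$ implies since then $0<d+2<p$; the $p$-closed case is handled there by citing Brunella, which your quotient construction reproves). You also correctly isolate the genuine difficulty: components of $\Delta_{\mathcal{F}}$ whose multiplicity is divisible by $p$ carry no invariance information. But your resolution of that difficulty is wrong. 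The hypothesis $d<p-2$ does \emph{not} guarantee that $\Delta_{\mathcal{F}}$ is reduced: the structure theorem you invoke (Theorem \ref{casoplano}) concerns only \emph{generic} foliations, under the different arithmetic condition $p\nmid d-1$, whereas Proposition \ref{JVP} must hold for \emph{every} foliation of degree $d<p-2$. The paper itself refutes your claim with an explicit example: a degree-two foliation whose reduction modulo any prime $p>3$ has $p$-divisor $3\{y=0\}+(p+1)\{z=0\}$, which is non-reduced even though $2<p-2$ already for $p=5$. So the final step of your argument fails as stated.

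The correct finishing move is arithmetic rather than geometric, and you already have every tool needed for it. Since
\[
\deg(\Delta_{\mathcal{F}})=p(d-1)+d+2\equiv d+2\pmod{p},
\]
and $0<d+2<p$ by hypothesis, the total degree of $\Delta_{\mathcal{F}}$ is not divisible by $p$; hence not every prime component can occur with multiplicity divisible by $p$, and there exists a prime divisor $\mathfrak{P}$ with $\od_{\mathfrak{P}}(\Delta_{\mathcal{F}})=a$, $p\nmid a$. Your own cofactor identity then closes the argument: writing $W=f^{a}g$ with $f$ the irreducible equation of $\mathfrak{P}$ and $f\nmid g$, the relation $v(W)=\operatorname{div}(v)\,W$ yields $a\,f^{a-1}v(f)\,g\equiv 0\pmod{f^{a}}$, and since $a\neq 0$ in $\corpo$ and $f$ is prime, $v(f)\in\langle f\rangle$, i.e.\ $\mathfrak{P}$ is an honest $\mathcal{F}$-invariant curve. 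This is exactly the converse direction of Proposition \ref{inv}, which the paper proves instead via the closedness of $\Delta_{\mathcal{F}}^{p-1}\omega$. In short: you do not need $\Delta_{\mathcal{F}}$ reduced, only one component of multiplicity prime to $p$, and that is precisely what the degree bound $d<p-2$ buys through the degree formula.
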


In positive characteristic $p$ there are two classes of foliations: the foliations that are $p$-closed and the foliations that are not. Given a foliation $\mathcal{F}$ in a smooth algebraic surface $X$ defined over a field $\corpo$ of characteristic $p>0$, we say that it is $p$-closed if their tangent sheaf $T_{\mathcal{F}}$ is closed by $p$-powers. This is equivalent to say that the $\mathcal{O}_{X}$-morphism 
$$
    \psi_{\mathcal{F}} \colon F_{X}^{*}T_{\mathcal{F}}\longrightarrow N_{\mathcal{F}}\qquad v \mapsto v^{p}\mod T_{\mathcal{F}} 
$$
is the zero  morphism where $F_{X}$ is the absolute Frobenius morphism and $N_{\mathcal{F}}$ is the normal bundle of $\mathcal{F}$. The morphism $\psi_{\mathcal{F}}$ is called the \textbf{$p$-curvature} of the foliation $\mathcal{F}$.

When the foliation is $p$-closed it follows from \cite[Théorème 1]{Brunella1999SurLH} that there are infinitely many algebraic solutions. On the other hand, if the foliation $\mathcal{F}$ is not $p$-closed then there is a divisor $\Delta_{\mathcal{F}}$, the \textbf{$p$-divisor}, which is defined as the degeneracy locus of the $p$-curvature morphism $\psi_{\mathcal{F}}$. An interesting property of the $p$-divisor is that every irreducible algebraic solution of the foliations is contained in the support of the $p$-divisor.

Since the set of foliations that are not $p$-closed is open we can ask about the structure of the $p$-divisor for generic foliations. 

\begin{pro} Let $X$ be a smooth algebraic surface defined over a field $\corpo$ of characteristic $p>0$. What can we say about the $p$-divisor $\Delta_{\mathcal{F}}$ of a generic foliation $\mathcal{F}$? 
\end{pro}

We present results towards the solution of this problem in the case where $X$ is the projective plane or a Hirzebruch surface. More precisely, we show the following theorems.

\begin{theorem}\label{casoplano} Let $\corpo$ be an algebraically closed field of characteristic $p>0$. A generic foliation in the projective plane $\mathbb{P}_{\corpo}^{2}$ of degree $d\geq 1$ with $p\nmid d-1$ has reduced $p$-divisor.
\end{theorem}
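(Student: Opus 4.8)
The plan is to reduce the statement to the construction of a single well-chosen foliation and then to propagate reducedness by an openness argument.

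First I would make the assignment $\mathcal{F}\mapsto\Delta_{\mathcal{F}}$ into an algebraic family. Degree $d$ foliations on $\mathbb{P}^{2}_{\corpo}$ are parametrized by a projective space $\mathbb{P}(V)$ (the space of twisted vector fields, or equivalently of twisted $1$-forms), and by the fact recorded in the Introduction the non-$p$-closed foliations form an open set $U\subseteq\mathbb{P}(V)$. Since $T_{\mathcal{F}}=\mathcal{O}(1-d)$ and $N_{\mathcal{F}}=\mathcal{O}(d+2)$, and the absolute Frobenius sends $\mathcal{O}(n)$ to $\mathcal{O}(pn)$, the $p$-curvature $\psi_{\mathcal{F}}$ is a global section of $N_{\mathcal{F}}\otimes(F_{X}^{*}T_{\mathcal{F}})^{\vee}=\mathcal{O}(d+2+p(d-1))$; hence for $\mathcal{F}\in U$ the $p$-divisor $\Delta_{\mathcal{F}}$ is an effective curve of degree $N:=(p+1)d-(p-2)$. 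Because $\psi_{\mathcal{F}}$ is built from the coefficients of $\mathcal{F}$ by iterated differentiation and multiplication (the $p$-fold iteration followed by reduction modulo $T_{\mathcal{F}}$), the coefficients of a defining form of $\Delta_{\mathcal{F}}$ depend polynomially on those of $\mathcal{F}$. This yields a morphism $\Phi\colon U\to\mathbb{P}(H^{0}(\mathbb{P}^{2}_{\corpo},\mathcal{O}(N)))$, $\mathcal{F}\mapsto[\Delta_{\mathcal{F}}]$.

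Next I would use that the locus $\Sigma$ of non-reduced forms in $\mathbb{P}(H^{0}(\mathcal{O}(N)))$ is closed: it is the discriminant hypersurface, namely the finite union, over $1\le k\le\lfloor N/2\rfloor$, of the proper images of the multiplication maps $\mathbb{P}(H^{0}(\mathcal{O}(k)))\times\mathbb{P}(H^{0}(\mathcal{O}(N-2k)))\to\mathbb{P}(H^{0}(\mathcal{O}(N)))$, $(G,H)\mapsto[G^{2}H]$, each closed by properness. Consequently $\Phi^{-1}(\Sigma)$ is closed in $U$, so the set of $\mathcal{F}$ with reduced $\Delta_{\mathcal{F}}$ is open. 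Therefore it suffices to exhibit a single foliation $\mathcal{F}_{0}$ of degree $d$ that is not $p$-closed and whose $p$-divisor is reduced; density then gives the conclusion for a generic foliation (and simultaneously shows $U\neq\emptyset$).

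The main obstacle is this explicit construction. I would take a concrete degree $d$ field — for instance the Jouanolou-type field $v_{d}$ of the Introduction, or a suitable monomial modification of it — written in an affine chart as $v=a\,\partial_{x}+b\,\partial_{y}$, and compute its $p$-curvature by forming the restricted power $v^{[p]}=a^{[p]}\partial_{x}+b^{[p]}\partial_{y}$ (the derivation part of the $p$-fold composition of $v$) and identifying the local equation of $\Delta_{\mathcal{F}_{0}}$ with $a^{[p]}b-b^{[p]}a$, then homogenizing and accounting for the line at infinity. I expect the resulting form to factor as a controlled boundary part times a main part, and the work is to prove the main part squarefree. This is exactly where $p\nmid d-1$ is decisive: the iteration produces coefficients that are integer multiples of $d-1$ (arising from differentiating the degree-$d$ data, via the Euler relation $R^{[p]}=R$ for the radial field), and $p\nmid d-1$ forces these to be nonzero in $\corpo$, so no spurious repeated factor is imposed and the candidate form stays squarefree; one expects precisely such a repeated factor when $p\mid d-1$, which accounts for the hypothesis. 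The hard part will be this squarefree computation — controlling the factorization of $a^{[p]}b-b^{[p]}a$ and isolating the role of $d-1$ — while the degree bookkeeping, the algebraicity of $\Phi$, and the discriminant openness are all formal.
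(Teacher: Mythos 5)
Your openness reduction is sound and essentially parallels the paper's Lemma \ref{aberto}: you detect non-reducedness by the closed images of the multiplication maps $(G,H)\mapsto [G^{2}H]$ on $\mathbb{P}^{2}$, while the paper uses incidence loci $\{(C,\mathcal{F})\mid 2C\leq \Delta_{\mathcal{F}}\}$ inside $Z_{e}(X)\times \mathbb{F}ol_{\mathcal{N}}(X)$ and properness of the projection; both arguments are correct and interchangeable here. The degree bookkeeping $\deg(\Delta_{\mathcal{F}})=p(d-1)+d+2$ also matches the paper.

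The genuine gap is the nonemptiness step, which is the entire content of the theorem, and your proposal does not carry it out: you state that you ``expect'' the local equation $a^{[p]}b-b^{[p]}a$ of the $p$-divisor of a Jouanolou-type field to factor into a controlled boundary part times a squarefree main part, and you defer exactly this to ``the hard part.'' No mechanism is given for proving squarefreeness of that iterated-derivation determinant for general $d$ and $p$, and your heuristic for where $p\nmid d-1$ enters (coefficients that are integer multiples of $d-1$ via the Euler relation) is speculation that does not correspond to a workable argument --- nothing in the $p$-fold iteration forces such a uniform factor, and controlling the factorization of $v^{[p]}$ for an explicit nonlinear field across all $p$ is precisely the kind of computation one cannot do directly. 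The paper circumvents any such computation: for $d=1$ it takes the diagonal form $-yz\,dx+\alpha xz\,dy+(1-\alpha)xy\,dz$ with $\alpha\notin\mathbb{F}_{p}$, whose $p$-divisor is the reduced triangle $\{xyz=0\}$; for $d=2$ it builds a foliation with a radial point and four invariant lines of multiplicity one in $\Delta_{\mathcal{F}}$ (multiplicity one coming from $p$-reduced singularities at infinity), and identifies the residual component as an irreducible degree-$p$ invariant curve by intersection theory on the blow-up at the radial point --- never by computing $s_{\mathcal{F}}$; and for $d\geq 3$ it pulls the degree-$2$ example back under $\Phi_{e}\colon [x:y:z]\mapsto[x^{e}:y^{e}:z^{e}]$ with $e=d-1$, where the hypothesis $p\nmid d-1$ is used for what it actually does: it makes $\gcd(e,p)=1$ so that Lemma \ref{pullback} guarantees the pullbacks of the reduced invariant curves stay reduced, after which $\Delta_{\mathcal{H}}$ is pinned down by degree comparison. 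So the hypothesis governs the covering degree of a ramified pullback, not an arithmetic cancellation inside the $p$-curvature; without a replacement for this construction your proof does not close.
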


\begin{theorem}\label{Hirz} Let $\corpo$ be an algebraically closed field of characteristic $p>0$. Let $\Sigma_{d}$ be the $d$-Hirzebruch surface defined over $\corpo$ and let $d_{1},d_{2} \in \mathbb{Z}_{\geq 0}$ such that $p\nmid d_{i}$, if $d_i\neq 0$. Let $F$ be a fiber of the natural projection  $\pi\colon \Sigma_{d} \longrightarrow \mathbb{P}_{\corpo}^{1}$ and $M_{d}$ be a section which satisfies  $ F\cdot M_{d} = 1$ and $ M_{d}^{2} = d$. Then a generic foliation in  $\Sigma_{d}$ with normal bundle $N$ which is numerically equivalent to $(d_{1}-d+2)F+(d_{2}+2)M_{d}$ has reduced $p$-divisor. 
\end{theorem}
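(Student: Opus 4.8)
The plan is to prove Theorem \ref{Hirz} by the same strategy I would use for Theorem \ref{casoplano}, namely a genericity argument combined with an explicit computation of the $p$-divisor on a single well-chosen foliation. The key observation is that being \emph{reduced} is an open condition in the space of foliations with the prescribed normal bundle $N$: the $p$-divisor $\Delta_{\mathcal{F}}$ is the vanishing locus of the $p$-curvature section $\psi_{\mathcal{F}}$, which varies algebraically with $\mathcal{F}$, so the locus where $\Delta_{\mathcal{F}}$ fails to be reduced (i.e. where the defining section acquires a multiple component or a singular point along its zero scheme) is closed. Hence it suffices to exhibit \emph{one} foliation $\mathcal{F}_{0}$ on $\Sigma_{d}$ with normal bundle numerically equivalent to $(d_{1}-d+2)F+(d_{2}+2)M_{d}$ whose $p$-divisor is reduced; genericity then follows. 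First I would fix affine/homogeneous coordinates adapted to the toric structure of $\Sigma_{d}$, writing the foliation via a twisted vector field or a $1$-form with values in $N$, and identify the $p$-divisor concretely as the zero scheme of the $p$-curvature.

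The main computational step is to write down the $p$-curvature explicitly. Recall that $\psi_{\mathcal{F}}(v) = v^{p} \bmod T_{\mathcal{F}}$, so if $\mathcal{F}$ is generated locally by a derivation $v$, the section $\psi_{\mathcal{F}}$ is obtained by expressing the $p$-th iterate $v^{p}$ in the basis $\{v, w\}$ of the tangent space and reading off the coefficient of the transverse direction $w$. The arithmetic of iterating a derivation $p$ times is governed by Jacobson's formula and the Hochschild/Cartier identity $(\,\cdot\,)^{[p]}$, and on a surface the computation reduces to understanding $v^{p}$ for a derivation with polynomial coefficients. The hypothesis $p \nmid d_{i}$ (and $p \nmid d-1$ in the planar case) is exactly what is needed to guarantee that the leading terms of this $p$-th power do not collapse: these congruence conditions ensure that the relevant coefficient polynomials, after reduction mod $p$, retain the correct degree and do not vanish identically, so that $\Delta_{\mathcal{F}_{0}}$ has the expected numerical class and, more importantly, square-free defining equation.

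The plan for verifying reducedness on the chosen $\mathcal{F}_{0}$ is to pick a foliation whose $p$-curvature factors transparently—ideally one adapted to the fibration $\pi\colon \Sigma_{d} \to \mathbb{P}_{\corpo}^{1}$ or built from a monomial vector field analogous to Jouanolou's $v_{d}$—so that $\psi_{\mathcal{F}_{0}}$ becomes an explicit polynomial section of $N \otimes F_{X}^{*}T_{\mathcal{F}}^{\vee}$ whose zero locus I can analyze directly. I would then check reducedness by showing the defining equation is square-free, equivalently that it has no common factor with its partial derivatives along the zero scheme; here the conditions $p \nmid d_{i}$ reenter to certify that these partials are nonzero. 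The chief obstacle I anticipate is twofold: first, organizing the $p$-th iterate computation so that it remains tractable despite the $p$-dependence—this likely requires a clever choice of local coordinates or of $\mathcal{F}_{0}$ so that $v^{p}$ admits a closed form rather than an opaque sum; and second, controlling the behaviour of $\psi_{\mathcal{F}_{0}}$ along the two special curves $F$ and $M_{d}$ (the fiber and the negative/positive section), since the normal bundle's mixed class $(d_{1}-d+2)F + (d_{2}+2)M_{d}$ means the $p$-divisor could a priori pick up multiplicity along these invariant curves. Ruling out such spurious multiplicity—by a boundary/local analysis at the components of the toric divisor—is where the numerical hypotheses on $d_{1}, d_{2}, d$ must do the decisive work, and I expect this local control to be the technical heart of the argument.
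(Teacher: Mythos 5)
Your top-level skeleton (reducedness is open in $\mathbb{F}ol_{\mathcal{N}}(\Sigma_d)$, so it suffices to exhibit one foliation with reduced $p$-divisor) agrees with the paper's, but there are two genuine gaps. First, the openness claim is not as immediate as you suggest: the $p$-divisor is not even defined on the closed locus of $p$-closed foliations, and the paper's Lemma \ref{aberto} has to work for this, parametrizing curves of bounded degree by a projective variety $Z_e(X)$, using properness of the projection to conclude that the images of the incidence loci $\{2C\leq \Delta_{\mathcal{F}}\}$ are closed, and then removing the $p$-closed locus $\mathbb{F}$ by hand. Your one-line assertion that the non-reduced locus is closed because $\psi_{\mathcal{F}}$ varies algebraically skips exactly this.

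Second, and more seriously, your proposed method for producing the example---write down a monomial or Jouanolou-type vector field on $\Sigma_d$, compute $v^{p}$ in closed form, and check that the resulting section is square-free by comparing it with its partial derivatives---is not viable for general $p$ and general $(d_1,d_2,d)$, and it is not what makes the theorem true. Nowhere does the paper compute a $p$-curvature section explicitly beyond degree-one toy cases; instead, the structure of $\Delta_{\mathcal{F}}$ is pinned down \emph{indirectly}: one produces enough invariant curves (each forced into $\Delta_{\mathcal{F}}$ with multiplicity one by Proposition \ref{inv} together with $p$-reduced singularities at infinity), and then a degree count against the numerical class $pK_{\mathcal{F}}+N_{\mathcal{F}}$ forces equality, hence reducedness. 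Moreover, the passage to arbitrary $d$ is by induction via elementary transformations: blow up a $p$-reduced singularity at $F_0\cap M$ (Lemma \ref{explosao} preserves reducedness of $\Delta$), contract the strict transform of the fiber to land in $\Sigma_{d+1}$ (with Lemma \ref{elementar} tracking $K_{\mathcal{G}}$), and check that a new $p$-reduced singularity survives so the induction can continue; the base case $\Sigma_0=\mathbb{P}^1\times\mathbb{P}^1$ is handled in Theorem \ref{casop1p1} by Riccati foliations and cyclic covers. The hypotheses $p\nmid d_i$ enter there, through Lemma \ref{pullback} (pullback under $x_0\mapsto x_0^{l}$ preserves reducedness only when $\gcd(l,p)=1$), not through any non-collapsing of leading terms of $v^{p}$. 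Without this inductive mechanism, or some substitute for the invariant-curve-plus-degree-count argument, your plan leaves the hard content of the theorem---the existence of a single example on $\Sigma_d$---unproved.
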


\subsection{Organization of the paper} In Section \ref{notacoes} we fix the notations that will be used in the paper. In Section \ref{folheacoesemsuperficies}, we introduce the definition of $p$-divisor of foliations on algebraic surfaces defined  over a field of characteristic $p>0$. We recall the global representation of foliations in the projective plane and in the Hirzebruch surface. We present the definition of the $p$-divisor of a foliation and we investigate some of their properties, in particular, some applications to algebraicity of foliations on complex projective plane. In Section \ref{pdivisoremP2} we consider the problem of the structure of the $p$-divisor for foliations in the projective plane and in the Section \ref{pdivisorP1P1} we study the  problem for  $\mathbb{P}_{\corpo}^{1}\times\mathbb{P}_{\corpo}^{1}$. In Section \ref{pdivisorSigma} we finalize by considering  the Hirzebruch surface of type $d>0$.
\section{Notation} \label{notacoes}

\begin{itemize}
    \item $\corpo = $ algebraically closed field of characteristic $p>0$.
    
    \item $X = $ smooth algebraic surface defined over $\corpo$.
    
    \item Curve in $X$ $=$ effective divisor in $X$.
    
    \item $d$-Hirzebruch surface over $\corpo$ ($d\geq 0$)  $=$ $\Sigma_{d} = \mathbb{P}(\mathcal{O}_{\mathbb{P}_{\corpo}^{1}}\oplus 
    \mathcal{O}_{\mathbb{P}_{\corpo}^{1}}(d))$.
    
    \item $M = $ the curve in $\Sigma_{d}$ ($d>0$) such that $M^{2} = -d$.
    
    \item $\equiv$  $=$ numerical equivalence in $\Div(X)$.
    
    \item $\Num_{\mathbb{Q}}(X) = (\Div(X)/\equiv)\otimes
\mathbb{Q}$.

    \item If $(X,H)$ is a polarized surface and  $D$ is a divisor in $X$, $\deg(D) = D\cdot H$.
    
    \item $U(X,H,N), V(X,H,\mathcal{N}) = $ open sets of Lemma \ref{aberto}.
    
    \item $m_{Q}(C) =$ algebraic multiplicity of the curve $C \subset X$ at $Q \in X$.
    
    \item If $\pi_{Q}\colon Bl_{Q}(X)\longrightarrow X$ is the blow up at $Q$, $\mathcal{F}$ is a foliation in $X$, $\mathcal{G} = \pi_{Q}^{*}\mathcal{F}$ and $E$ is the exceptional divisor,  $l(Q) = \od_{E}(N_{\mathcal{G}}^{*}-\pi_{Q}^{*}N_{\mathcal{F}}^{*})$.
    
    \item $O(2) = $ terms of the order at least $2$.
    
\end{itemize}

\section{Foliations on surfaces and the $p$-divisor} \label{folheacoesemsuperficies}

Let $\corpo$ be an algebraically closed field and $X$ be a smooth algebraic surface defined over $\corpo$. A \textbf{foliation} $\mathcal{F}$ in $X$ is a coherent subsheaf  $T_{\mathcal{F}}\subset T_{X}$ of rank one which satisfies the following properties:
\begin{itemize}
    \item  The sheaf $T_{\mathcal{F}}$
 is closed by Lie brackets;    
    \item The  quotient $T_{X}/T_{\mathcal{F}}$ is torsion free, that is, $T_{\mathcal{F}}$ is saturated in $T_{X}$.
\end{itemize}
We can define a foliation in $X$ in more explicitly terms. In this terms, a foliation in $X$ consists of a system $\{(U_{i},\omega_{i},v_{i})\}_{i\in I}$ such that:

\begin{itemize}
    \item The collection $\{U_{i}\}_{i\in I}$ is an open cover of $X$;
    \item For each $i \in I$ we have $v_{i} \in T_{X}(U_{i})$, $\omega_{i} \in \Omega_{X/k}^{1}(U_{i})$ such that $i_{v_{i}}\omega_{i} = 0$;
    \item In $U_{i}\cap U_{j}$ we have $\omega_{i} = f_{ij}\omega_{j}$ and $v_{i} = g_{ij}v_{j}$ for some functions $f_{ij},g_{ij} \in \mathcal{O}_{X}(U_{ij})^{*}$;
    \item For each $i\in I$ we have $\codim \sing(\omega_{i})\geq 2$ and $\codim \sing(v_{i})\geq 2$.
\end{itemize}

Note that the second definition is an alternative version of the first. Indeed, given a foliation  $\{(U_{i},\omega_{i},v_{i})\}_{i\in I}$ in $X$ we can construct a saturated subsheaf of $T_{X}$ in the following way: For each open $U$ of $X$ we define $T_{\mathcal{F}}(U)$ by
$$
    T_{\mathcal{F}}(U) = \{v \in T_{X}(U)\mid i_{v}\omega_i|_{U_i} = 0 \mbox{ in $U\cap U_i$ for all $i\in I$}\}.
$$
We ensure that $T_{\mathcal{F}}$ is a saturated subsheaf of $T_{X}$ by using the condition imposed in the singular set of $\omega_i$.  Reciprocally, let $T_{\mathcal{F}}$ be a foliation in $X$ and consider the global section  $\omega \in \h(X,\Omega_{X}^{1}\otimes N_\mathcal{F})$ induced by the morphism $T_{X} \longrightarrow N_{\mathcal{F}}$. Then, by definition we obtains a open cover $\{U_{i}\}_{i\in I}$  of  $X$, $1$-forms $\omega_{i}\in \Omega_{X}^{1}(U_i)$
and functions $\{f_{ij}\}_{i,j}$ with $f_{ij} \in \mathcal{O}_{X}^{*}(U_{i}\cap U_{j})$ representing $N_\mathcal{F}^{*}$ such that $\omega_{i}= f_{ij}\omega_{j}$. Since $T_{\mathcal{F}}$ is a saturated subsheaf of $T_{X}$ we ensure that  $\codim \sing (\omega_i) \geq 2$ for every $i$. Note that the vector fields $v_{i}$ are obtained in a similar way by considering a global section $v \in \h(X,T_{X}\otimes T_{\mathcal{F}}^{*})$ induced by the inclusion $T_{\mathcal{F}}\subset T_{X}$. By construction, we obtain
vector fields $\{v_{i}\}_{i\in I}$ in $\{U_{i}\}_{i\in I}$ such that for every $i$ the vector field $v_{i}$ is tangent to the $1$-form $\omega_i$ that defines $\mathcal{F}$ in the open set $U_{i}$.

Let  $\{(U_{i},\omega_{i},v_{i})\}_{i\in I}$ be a foliation in $X$. The collection
$\{f_{ij}^{-1}\}, \{g_{ij}\}$ determines elements of $\h(X,\mathcal{O}_{X}^{*}) = \Pic(X)$ and the line bundles associated are the \textbf{conormal} bundle $\Omega^{1}_{X/\mathcal{F}}$ and the \textbf{cotangent} bundle $\Omega^{1}_{\mathcal{F}}$ associated to $\mathcal{F}$. Any divisor in the correspondent linear classes to $\Omega_{\mathcal{F}}^{1}$ and $(\Omega^{1}_{X/\mathcal{F}})^{*}$ will be called the \textbf{canonical divisor} and the \textbf{normal divisor} associated to $\mathcal{F}$ and will be denoted by $K_{\mathcal{F}}$ and $N_{\mathcal{F}}$.

\subsection{The \texorpdfstring{$p$}{p}-divisor for foliations on smooth algebraic surfaces}

We start this section recalling the following basic lemma about derivations over fields of positive characteristic.

\begin{lemma}\label{Basicao} Let $\corpo$ be an field of characteristic $p>0$ and $R$ be a $\corpo$-domain. Let  $D \in \Der_{k}(R)$ be a $\corpo$-derivation. Then $D^{p} \in \Der_{\corpo}(R)$. If $f \in R$ then
    $$
        (fD)^{p} = f^{p}D^{p}-fD^{p-1}(f^{p-1})D.
    $$
\end{lemma}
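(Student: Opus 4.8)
The plan is to treat the two assertions separately, proving the first by the iterated Leibniz rule and the second by computing inside the associative $\corpo$-algebra $\mathrm{End}_{\corpo}(R)$ of $\corpo$-linear operators on $R$. For $D^{p}\in\Der_{\corpo}(R)$ I would expand $D^{p}(ab)$ by the higher Leibniz rule $D^{n}(ab)=\sum_{i=0}^{n}\binom{n}{i}D^{i}(a)D^{n-i}(b)$ and use $\binom{p}{i}\equiv 0\pmod p$ for $0<i<p$: only the two extreme terms survive, so $D^{p}(ab)=aD^{p}(b)+bD^{p}(a)$, and $\corpo$-linearity is immediate. I would record two consequences used below: since a function times a derivation is a derivation, $f^{p}D^{p}\in\Der_{\corpo}(R)$; and since $fD\in\Der_{\corpo}(R)$, its $p$-th iterate $(fD)^{p}\in\Der_{\corpo}(R)$.

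For the formula, regard each $g\in R$ as the multiplication operator in $\mathrm{End}_{\corpo}(R)$; the single relation $D\circ g=g\circ D+D(g)$ lets one move every multiplication operator to the left, so $(fD)^{p}=\sum_{i=0}^{p}P_{i}D^{i}$ with coefficients $P_{i}\in R$ given by universal differential polynomials in $f$ with $\mathbb{F}_{p}$-coefficients. Evaluating on $1\in R$ gives $P_{0}=0$, and the leading term is visibly $P_{p}=f^{p}$, so the statement reduces to the two claims $P_{i}=0$ for $2\le i\le p-1$ and $P_{1}=-fD^{p-1}(f^{p-1})$.

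The hard part will be the vanishing of the middle coefficients. By the first paragraph $\Theta:=(fD)^{p}-f^{p}D^{p}=\sum_{i\ge 1}P_{i}D^{i}$ is a $\corpo$-derivation, and I would show that any $\corpo$-derivation, viewed as a differential operator, has order at most $1$. Writing out $\delta(gh)-g\delta(h)-h\delta(g)=0$ for $\delta=\sum_{i}a_{i}D^{i}$ cancels the extreme terms and leaves $\sum_{i}a_{i}\sum_{0<j<i}\binom{i}{j}D^{j}(g)D^{i-j}(h)=0$ for all $g,h$; passing to the universal ring $\mathbb{F}_{p}[x_{0},x_{1},\dots]$ with $x_{j}=D^{j}(f)$ and adjoining generic arguments makes the products $D^{j}(g)D^{i-j}(h)$ algebraically independent, so the top-order symbol forces $m\,a_{m}=0$, whence $a_{m}=0$ since $0<m\le p-1$ (so $m\neq 0$ in $\corpo$) and $R$ is a domain; a downward induction on the order then kills all $P_{i}$ with $i\ge 2$. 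Equivalently one may invoke Jacobson's $p$-th power formula in the ring of differential operators. This leaves the operator identity $(fD)^{p}=f^{p}D^{p}+P_{1}D$.

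Finally I would identify $P_{1}$ by applying this identity to a generic argument $g$: then $P_{1}$ is exactly the coefficient of $D(g)$, and iterating $fD$ on $g$ and solving the resulting recursion gives $P_{1}=(fD)^{p-1}(f)$. The remaining point, $(fD)^{p-1}(f)=-fD^{p-1}(f^{p-1})$, is a finite computation that I would carry out in the universal ring by expanding both sides with the higher Leibniz rule and matching coefficients; here the congruence $\binom{p-1}{j}\equiv(-1)^{j}\pmod p$ is what produces the exponent $p-1$ on $f$ and reconciles the signs. Substituting back yields $(fD)^{p}=f^{p}D^{p}-fD^{p-1}(f^{p-1})D$, as claimed.
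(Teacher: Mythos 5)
For this lemma the paper gives no argument at all: it simply cites \cite[Proposition (5.3)]{katz1} (the formula is Hochschild's), so any self-contained proof is necessarily a different route from the text, and most of yours is sound. The binomial argument for $D^{p}\in\Der_{\corpo}(R)$ is the standard one and is complete. Your middle section is also essentially correct: $(fD)^{p}-f^{p}D^{p}$ is a $\corpo$-derivation, and your plan to prove in the universal differential ring (fresh differential indeterminates for $f$, $g$, $h$, so that the products $D^{j}(g)D^{i-j}(h)$ are independent monomials) that a derivation of the form $\sum_{i}a_{i}D^{i}$ has $a_{i}=0$ for $2\le i\le p-1$ -- using $\binom{i}{1}=i\not\equiv 0 \pmod p$ -- and then to specialize, is workable; together with $P_{0}=0$, $P_{p}=f^{p}$ and your extraction of $P_{1}$ it correctly yields Jacobson's formula $(fD)^{p}=f^{p}D^{p}+\bigl((fD)^{p-1}f\bigr)D$.

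The genuine gap is the last step, and it is exactly the content of the lemma: the identity $(fD)^{p-1}(f)=-f\,D^{p-1}(f^{p-1})$. You dismiss it as ``a finite computation \ldots\ expanding both sides with the higher Leibniz rule and matching coefficients,'' but that description is not a proof. The higher Leibniz rule does expand $D^{p-1}(f^{p-1})$ into a multinomial sum, but $(fD)^{p-1}(f)$ is not expanded by any single application of a Leibniz rule: iterating $E=fD$ produces a sum over compositions with nested, Bell-polynomial-type coefficients, and proving that the two expansions agree modulo $p$ for an \emph{arbitrary} prime is precisely Hochschild's lemma, not a routine verification; the congruence $\binom{p-1}{j}\equiv(-1)^{j}\pmod p$ is one ingredient but does not by itself produce the matching. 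For each fixed $p$ the check is indeed finite (a polynomial identity in the indeterminates $f,Df,\ldots,D^{p-1}f$), but the statement quantifies over all $p$, so a uniform combinatorial or operator-theoretic argument is required -- for instance Hochschild's original computation, reproduced as \cite[Proposition (5.3)]{katz1}, or a derivation starting from the operator identity $(D+a)^{p}=D^{p}+a^{p}+D^{p-1}(a)$ for a multiplication operator $a$ (which does follow easily from Jacobson's formula, since every iterated bracket of $D$ and $a$ containing $a$ twice vanishes), though even that route still needs a nontrivial substitution argument to reach the stated formula. As it stands, your proposal correctly reduces the lemma to its crux and then asserts the crux; either supply such a uniform argument or cite \cite[Proposition (5.3)]{katz1} for this identity, which is what the paper itself does for the whole lemma.
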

\begin{proof} \cite[Proposition (5.3)]{katz1}.
\end{proof}

Let $X$ be an algebraic surface defined over a algebraically closed field $\corpo$ of characteristic $p>0$. Let $\mathcal{F} = \{(U_{i},\omega_{i},v_{i})\}$ be a foliation on $X$.

\begin{dfn} We say that $\mathcal{F}$ is \textbf{$p$-closed} if for some $i$ we have $v_{i}\wedge v_{i}^{p}= 0$.
\end{dfn}

\begin{OBS} By Lemma \ref{Basicao} we conclude that $v_{i}\wedge v_{i}^{p} = 0$ for some  $i$ if and only if $v_{j}\wedge v_{j}^{p} = 0$ for every $j$.
\end{OBS}

Suppose that $\mathcal{F}$ is not $p$-closed. In the open $U_{ij} =  U_{i}\cap U_{j}$ we have $\omega_{i} = f_{ij}\omega_{j}$ and $v_{i} = g_{ij}v_{j}$. Since we are assuming that $\mathcal{F}$ is not  $p$-closed we have for each  $i,j \in I$
    $$
        0 \neq i_{v_{i}^{p}}\omega_{i} = i_{(g_{ij}v_{j})^{p}}f_{ij}\omega_{j} = i_{(g_{ij}^{p}v_{j}^{p}+g_{ij}v_{j}^{p-1}(g_{ij}^{p-1})v_{j})}f_{ij}\omega_{j} = g_{ij}^{p}f_{ij}i_{v_{j}^{p}}\omega_{j}\neq 0.
    $$
So, the collection  $\{i_{v_{i}^{p}}\omega_{i}\}_{i\in I}$ determines a global section $0\neq s_{\mathcal{F}} \in H^{0}(X,(\Omega_{\mathcal{F}}^{1})^{\otimes p}\otimes N_{\mathcal{F}})$. 

\begin{dfn} Let $\mathcal{F}$ be a foliation in $X$ that is not $p$-closed. The \textbf{$p$-divisor} associated to $\mathcal{F}$ is the zero divisor of the section $s_{\mathcal{F}}$, that is,$
\Delta_{\mathcal{F}} = (s_{\mathcal{F}})_{0} \in \Div(X).$
\end{dfn}

\begin{OBS} Let $\mathcal{F}$ be a foliation in $\Projdois$ of degree $d>0$. Then, we have the formulas: $\Omega_{\mathcal{F}}^{1} = \mathcal{O}_{\Projdois}(d-1)$ and $N_{\mathcal{F}} = \mathcal{O}_{\Projdois}(d+2)$. In particular, if $\mathcal{F}$ is not $p$-closed then the $p$-divisor is a divisor of degree $\deg(\Delta_{\mathcal{F}}) = p(d-1)+d+2$.
\end{OBS}

\begin{ex} Let $X = \mathbb{A}_{\corpo}^{2}$ and $\alpha \in \corpo$. Let  $\mathcal{F}$ be a foliation defined by the vector field
$
v = x\partial_{x}+\alpha y\partial_{y}.
$
Then, $\mathcal{F}$ is $p$-closed if and only if $\alpha \in\mathbb{F}_{p}$ and if $\alpha\not\in \mathbb{F}_{p}$ we have $s_{\mathcal{F}} = (\alpha^{p}-\alpha)xy$.
\end{ex}

\begin{prop} \label{inv} Let $X$ be smooth algebraic surface defined over $\corpo$ and $\mathcal{F}$ be a foliation in $X$ that is not $p$-closed. Let $C$ be an irreducible algebraic curve on $X$. If $C$ is $\mathcal{F}$-invariant then $\od_{C}(\Delta_{\mathcal{F}})>0$. Reciprocally, if $p$ does not divides $\od_{C}(\Delta_{\mathcal{F}})$ then $C$ is $\mathcal{F}$-invariant. 
\end{prop}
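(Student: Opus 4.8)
The plan is to reduce everything to a local computation at the generic point of $C$. Since $X$ is smooth and $C$ is irreducible, $C$ is smooth at its generic point, so I may pick local coordinates $(x,y)$ with $C = \{x=0\}$; then $\od_C$ is the $x$-adic order in the DVR $\mathcal{O}_{X,C}$. Write the local generators of $\mathcal{F}$ as $v = A\partial_x + B\partial_y$ and $\omega = B\,dx - A\,dy$, so that $i_v\omega = 0$ and, up to the local frame, $s_{\mathcal{F}} = i_{v^p}\omega$. The criterion I will use throughout is that $C$ is $\mathcal{F}$-invariant if and only if $x \mid v(x) = A$.

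For the forward implication, assume $C$ is invariant, so $A = xA'$. Writing $v^p = P\partial_x + Q\partial_y$ with $P = v^p(x)$, the local section reads $s_{\mathcal{F}} = BP - xA'Q$. The only thing to check is that $x \mid v(x)$ forces $x \mid v^k(x)$ for every $k$, which follows by induction from $v^{k+1}(x) = v(x)\,g_k + x\,v(g_k)$ once $v^k(x) = xg_k$. In particular $x \mid P$, so $x$ divides $s_{\mathcal{F}}$ and hence $\od_C(\Delta_{\mathcal{F}}) \geq 1 > 0$.

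For the converse I prove the contrapositive: if $C$ is \emph{not} invariant then $p \mid \od_C(\Delta_{\mathcal{F}})$. Now $x \nmid A$, so $A$ is a unit in $\mathcal{O}_{X,C}$ and I may normalise $v = Aw$ with $w = \partial_x + c\,\partial_y$, $c = B/A$, and $i_w\omega = 0$. Since $w(x) = 1$ gives $w^k(x) = 0$ for $k \geq 2$, the derivation $w^p$ has vanishing $\partial_x$-component, i.e. $w^p = c_p\,\partial_y$ with $c_p := w^p(y)$. Expanding $v^p = (Aw)^p = A^p w^p - A\,w^{p-1}(A^{p-1})\,w$ by Lemma \ref{Basicao} and contracting with $\omega$, the term carrying $w$ vanishes because $i_w\omega = 0$, leaving $s_{\mathcal{F}} = -A^{p+1}c_p$. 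As $A$ is a unit along $C$, it remains to prove $p \mid \od_x(c_p)$, where $m := \od_x(c_p)$ is finite because $\mathcal{F}$ is not $p$-closed.

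This divisibility is the heart of the matter, and I expect to reach it through a differential relation for $c_p$. Comparing the two expressions $w\circ(c_p\partial_y)$ and $(c_p\partial_y)\circ w$ for $w^{p+1}$, together with $[\partial_y,w] = (\partial_y c)\,\partial_y$, should yield the transport equation
$$w(c_p) = (\partial_y c)\,c_p.$$
Writing $c_p = \sum_{k\geq m}\gamma_k x^k$ with $\gamma_m \neq 0$, both $c\,\partial_y c_p$ and $(\partial_y c)\,c_p$ have $x$-order $\geq m$, hence so does $\partial_x c_p = (\partial_y c)\,c_p - c\,\partial_y c_p$; but the expansion $\partial_x c_p = \sum_k k\gamma_k x^{k-1}$ has lowest term $m\gamma_m x^{m-1}$, forcing $m\gamma_m = 0$ and therefore $p \mid m$. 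The two points that need care are the commutator bookkeeping behind the transport equation and the finiteness of $m$; everything else is the bookkeeping of leading terms.
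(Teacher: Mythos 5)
Your proof is correct, and your forward direction coincides with the paper's: invariance propagates from $v(x)\in(x)$ to $v^{p}(x)\in(x)$ by the same induction, and one contracts with $\omega$. The converse, however, takes a genuinely different route. The paper assumes $\od_{C}(\Delta_{\mathcal{F}})=\alpha\not\equiv 0 \bmod p$ and imports the closedness statement $d(\Delta_{\mathcal{F}}^{p-1}\omega)=0$ from \cite[Theorem 6.2]{MR2324555}; writing $\Delta_{\mathcal{F}}=f^{\alpha}g$ locally and expanding gives $\alpha g\,df\wedge\omega=f(g\,d\omega-dg\wedge\omega)$, whence $f\mid df\wedge\omega$, i.e. invariance. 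You instead prove the contrapositive by a self-contained computation: non-invariance makes $A$ a unit along $C$, so you may normalize $v=Aw$ with $w=\partial_{x}+c\,\partial_{y}$; Lemma \ref{Basicao} and $i_{w}\omega=0$ give $s_{\mathcal{F}}=-A^{p+1}c_{p}$ with $w^{p}=c_{p}\partial_{y}$ (valid since $w^{p}$ is again a derivation and kills $x$), and the identity $[w,w^{p}]=0$ together with $[w,\partial_{y}]=-(\partial_{y}c)\partial_{y}$ yields exactly your transport equation $w(c_{p})=(\partial_{y}c)\,c_{p}$, whose leading-coefficient analysis forces $p\mid\od_{x}(c_{p})$; your second flagged point also checks out, since a nonzero section of a line bundle on the irreducible surface $X$ has nonzero restriction to any dense open set, so $m<\infty$. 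What each approach buys: the paper's argument is shorter modulo the citation and works directly with the order $\alpha$, while yours eliminates the external input entirely, using only the paper's own Lemma \ref{Basicao} plus commutator bookkeeping --- indeed your transport equation is precisely the local shadow of $dh\wedge\omega=h\,d\omega$ with $h=i_{v^{p}}\omega$, which is what $d(h^{p-1}\omega)=0$ unwinds to --- and it makes transparent the stronger fact that every non-invariant component of $\Delta_{\mathcal{F}}$ occurs with multiplicity divisible by $p$. The only step worth making explicit in a final write-up is that the expansion $c_{p}=\sum_{k\geq m}\gamma_{k}x^{k}$ takes place in a completion, e.g. $\widehat{\mathcal{O}}_{X,C}\cong \corpo(C)[[x]]$ at the generic point, or $\corpo[[x,y]]$ at a general closed point of $C$ chosen so that $A$ does not vanish there.
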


\begin{proof} Suppose that $C$ is $\mathcal{F}$-invariant and let $R = \mathcal{O}_{X,C}$ be the ring of regular functions of $X$ along $C$. Let $U$ be an affine open set such that $T_{\mathcal{F}}$ if given by a regular vector field $v$ and  $N_{\mathcal{F}}^{*}$ is given by a regular $1$-form $\omega$.  Let $\{f=  0\}$ be the local equation for
$C$ in $U$ and note that $f$ is an uniformizer parameter
to the ring $R$. We need to show that  $\od_{f}(\Delta_{\mathcal{F}})>0$. Now, we have
$v(f) = fH$ and $\omega \wedge df = f\sigma
$
for some $H \in R$ and $\sigma \in \Omega_{R/\corpo}^{2}$. Contraction with the vector field $v^{p}$ gives
    $
        fi_{v^{p}}(\sigma) = i_{v^{p}}(\omega\wedge df) = i_{v^{p}}\omega df-i_{v^{p}}(df)\omega.
    $
By using the equality $v(f) = fH$ we conclude that $v^{p}(f) = fH_{p}$ for some regular function $H_p$.  So,
$
fi_{v^{p}}\sigma+v^{p}(f)\omega = i_{v^{p}}\omega df$ and we conclude that $i_{v^{p}}\omega \in \langle f \rangle
$. It follows that $\od_{C}(\Delta_{\mathcal{F}})>0$.

Reciprocally, suppose that $\od_{C}(\Delta_{\mathcal{F}}) = \alpha \not\equiv 0 \mod p$ and write $\Delta_{\mathcal{F}} = f^{\alpha}g$ with $g \in R^{*}$. By \cite[Theorem 6.2]{MR2324555} we know that $d(\Delta_{\mathcal{F}}^{p-1}\omega) = 0$ and expanding this formula we obtain
$
\alpha\cdot gdf\wedge \omega  = f(gd\omega-dg\wedge \omega),
$
which implies $f|df\wedge \omega$. So, $C$ is $\mathcal{F}$-invariant.
\end{proof}

The Proposition \ref{inv} has the following consequence (compare with Proposition \ref{JVP}).

\begin{cor} Let $\corpo$ be an algebraically closed field of characteristic $p>0$. Let $\mathcal{F}$ be a foliation on $\mathbb{P}_{\corpo}^{2}$ and suppose that $p\nmid d+2$. Then $\mathcal{F}$ has an invariant algebraic curve.
\end{cor}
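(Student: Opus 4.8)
The plan is to split into two cases according to whether $\mathcal{F}$ is $p$-closed, and in the non-$p$-closed case to extract an invariant curve from the $p$-divisor by a congruence on degrees. If $\mathcal{F}$ is $p$-closed, then by \cite[Théorème 1]{Brunella1999SurLH} it has infinitely many algebraic solutions, so in particular it admits an invariant algebraic curve and there is nothing to prove. I may therefore assume that $\mathcal{F}$ is not $p$-closed, so that the $p$-divisor $\Delta_{\mathcal{F}}$ is well defined and effective.

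Writing $d = \deg(\mathcal{F})$, the formulas $\Omega_{\mathcal{F}}^{1} = \mathcal{O}_{\mathbb{P}_{\corpo}^{2}}(d-1)$ and $N_{\mathcal{F}} = \mathcal{O}_{\mathbb{P}_{\corpo}^{2}}(d+2)$ show that $s_{\mathcal{F}}$ is a nonzero section of $\mathcal{O}_{\mathbb{P}_{\corpo}^{2}}(p(d-1)+d+2)$, so that
$$
\deg(\Delta_{\mathcal{F}}) = p(d-1)+d+2 \equiv d+2 \pmod p .
$$
Since $p \nmid d+2$ by hypothesis, $\deg(\Delta_{\mathcal{F}}) \not\equiv 0 \pmod p$. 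I then decompose $\Delta_{\mathcal{F}} = \sum_{i} n_{i} C_{i}$ into irreducible components, with $n_{i} = \od_{C_{i}}(\Delta_{\mathcal{F}}) > 0$, so that $\deg(\Delta_{\mathcal{F}}) = \sum_{i} n_{i}\deg(C_{i})$. If $p$ divided every $n_{i}$, then $p$ would divide $\deg(\Delta_{\mathcal{F}})$, a contradiction; hence some component $C_{i_{0}}$ has $p \nmid n_{i_{0}}$, and the reciprocal part of Proposition \ref{inv} shows that $C_{i_{0}}$ is $\mathcal{F}$-invariant.

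I expect no serious obstacle here: once Proposition \ref{inv} and the degree formula are in hand the conclusion is essentially forced, and the content of the corollary lies entirely in those two inputs. The only points requiring care are to invoke the $p$-closed alternative \emph{before} using $\Delta_{\mathcal{F}}$, which is defined only for foliations that are not $p$-closed, and to note that the hypothesis $p\nmid d+2$ excludes the degenerate low-degree situations: when $\mathcal{F}$ is not $p$-closed the section $s_{\mathcal{F}}$ is nonzero, forcing $p(d-1)+d+2\geq 0$, which together with $p\nmid d+2$ forces $d\geq 1$ and keeps the congruence argument meaningful.
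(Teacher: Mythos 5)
Your proposal is correct and follows essentially the same route as the paper: dispatch the $p$-closed case via \cite[Th\'eor\`eme 1]{Brunella1999SurLH}, then use the degree formula $\deg(\Delta_{\mathcal{F}}) = p(d-1)+d+2 \not\equiv 0 \pmod p$ to find an irreducible component of $\Delta_{\mathcal{F}}$ whose multiplicity is prime to $p$, and conclude by the reciprocal part of Proposition \ref{inv}. Your version merely makes explicit the decomposition $\Delta_{\mathcal{F}} = \sum_i n_i C_i$ that the paper leaves implicit, which is a harmless expansion of the same argument.
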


\begin{proof}  If $\mathcal{F}$ is $p$-closed then $\mathcal{F}$ admits infinitely many invariant curves by \cite[Théorème 1]{Brunella1999SurLH}. We can assume that $\mathcal{F}$ is not $p$-closed. Since $p$ does not divide $d+2$ the degree formula: $\deg(\Delta_{\mathcal{F}}) = p(d-1)+d+2$ shows that  $\deg(\Delta_{\mathcal{F}}) \not\equiv 0 \mod p$. In particular, there exists a prime divisor $\mathfrak{P}$ in the support of $\Delta_{\mathcal{F}}$ such that $\od_{\mathfrak{P}}(\Delta_{\mathcal{F}}) \not\equiv 0 \mod p$. By Proposition \ref{inv}, the divisor $\mathfrak{P}$ defines a $\mathcal{F}$-invariant irreducible curve.
\end{proof}

Let $\mathcal{F}$ be a foliation in a smooth algebraic surface $X$ and $Q \in \sing(\mathcal{F})$. We say that $Q$ is \textbf{not degenerated} with \textbf{eigenvalue} $\alpha \in \corpo$ if there exists an affine open subset $U\subset \mathbb{A}_{\corpo}^{2}$ which contains $Q$ such that $\mathcal{F}|_{U}$ can be represent by a polynomial vector field  $v = v_{1}+O(2)$ with $v_1 = x\partial_{x}+\alpha y\partial_{y}$ and $\alpha\neq0$.

\begin{dfn} Let $\mathcal{F}$ be a foliation on  $X$ and $Q \in \sing(\mathcal{F})$. We say that $Q$ is $p$-\textbf{reduced} if $Q$ is not degenerated and has type $\alpha(Q)$ satisfying the condition: $\alpha(Q) \not\in \mathbb{F}_{p}$.
\end{dfn}
 
 \begin{lemma}\label{intlocal} Let $\mathcal{F}$ be a foliation in a algebraic surface $X$ and $Q \in \sing(\mathcal{F})$ be a $p$-reduced singularity of $\mathcal{F}$. Then, $\mathcal{F}$ is not $p$-closed.
\end{lemma}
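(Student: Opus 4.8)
The plan is to reduce the statement to a purely local computation in the chart $U$ where the $p$-reduced singularity $Q$ lives, and then to extract the lowest-order term of $v\wedge v^p$. By definition $\mathcal{F}$ is $p$-closed precisely when $v_i\wedge v_i^p=0$ for the local vector field on some (equivalently, by the Remark following the definition, every) chart, so it suffices to produce a single chart on which $v\wedge v^p\neq 0$. Since $Q$ is $p$-reduced, I may choose coordinates $(x,y)$ centered at $Q$ in which $\mathcal{F}$ is defined by $v=x\partial_x+\alpha y\partial_y+O(2)$ with $\alpha\notin\mathbb{F}_p$, and the whole argument takes place in this chart.

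First I would record that $v^p$ is again a $\corpo$-derivation by Lemma~\ref{Basicao}, so it is completely determined by the two functions $v^p(x)$ and $v^p(y)$; likewise $v=v(x)\partial_x+v(y)\partial_y$. The key computation is to identify these functions modulo terms of order $\geq 2$. Writing $v=v_1+w$ with $v_1=x\partial_x+\alpha y\partial_y$ and $w=O(2)$, one observes that $v_1$ preserves the grading by degree (it acts on a monomial $x^ay^b$ by the scalar $a+\alpha b$), while $w$ strictly raises degree. Consequently $v$ never lowers the degree of a function, so the degree-one part of $v^n(x)$ is computed purely by the degree-preserving piece $v_1$; since $v_1^n(x)=x$ and $v_1^n(y)=\alpha^n y$, this yields
$$
v^p(x)=x+O(2),\qquad v^p(y)=\alpha^p y+O(2).
$$

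With these in hand I would expand
$$
v\wedge v^p=\bigl(v(x)\,v^p(y)-v(y)\,v^p(x)\bigr)\,\partial_x\wedge\partial_y,
$$
and substitute $v(x)=x+O(2)$, $v(y)=\alpha y+O(2)$ together with the formulas above. The products give $v(x)\,v^p(y)=\alpha^p xy+O(3)$ and $v(y)\,v^p(x)=\alpha xy+O(3)$, so the coefficient of $\partial_x\wedge\partial_y$ equals $(\alpha^p-\alpha)xy+O(3)$. Because $\corpo$ is algebraically closed of characteristic $p$, the identity $\alpha^p=\alpha$ holds exactly when $\alpha\in\mathbb{F}_p$; as $\alpha\notin\mathbb{F}_p$ we get $\alpha^p-\alpha\neq 0$, so the degree-two term $(\alpha^p-\alpha)xy$ is nonzero and $v\wedge v^p\neq 0$. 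Hence $\mathcal{F}$ is not $p$-closed.

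The only delicate point, and the step I expect to require the most care, is the grading argument establishing $v^p(x)=x+O(2)$ and $v^p(y)=\alpha^p y+O(2)$: one must be sure that the higher-order perturbation $w$ cannot feed back into the degree-one part, and hence, after forming the wedge, cannot corrupt the degree-two term. This is guaranteed precisely because neither $v_1$ nor $w$ decreases degree, so no cancellation of the leading term $(\alpha^p-\alpha)xy$ can occur. Everything else is a direct computation modeled on the linear example $v=x\partial_x+\alpha y\partial_y$ worked out earlier in the text.
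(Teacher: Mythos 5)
Your proof is correct and follows essentially the same route as the paper: both decompose $v = v_1 + O(2)$ with $v_1 = x\partial_x + \alpha y\partial_y$ and identify $(\alpha^p-\alpha)xy$ as the lowest-degree term of the coefficient of $v\wedge v^p$, which is nonzero precisely because $\alpha\notin\mathbb{F}_p$. Your degree-filtration argument merely spells out the step the paper asserts without proof, namely that the higher-order perturbation cannot contribute to the leading homogeneous component of $v^p$.
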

\begin{proof} Let $U$ be an affine open set that contains $Q$ and $x,y \in \mathcal{O}_{X,Q}$ local parameters system at $Q$ in $U$. In the open $U$ the foliation is given by a vector field $v = v_{1}+\tilde{v}$ where $v_{1} = x\partial_{x}+\alpha y\partial_{y}$ with $\alpha \not\in \mathbb{F}_{p}$ and $\tilde{v}$ consists of terms that has order at least two. Then,
$v^{p} = v_{1}^{p}+\tilde{v}_{p}$ where $\tilde{v}_{p}$
contains only homogeneous terms with order at least two. Observe that $v_{1}^{p} = x\partial_{x}+\alpha^{p} y\partial_{y}$ and $v_{1}\wedge v_{1}^{p}$ is the homogeneous component of the smallest degree that occurs in $v\wedge v^{p}$. Since $\alpha \not\in \mathbb{F}_{p}$ we ensure that $v_{1}\wedge v_{1}^{p} = (\alpha^{p}-\alpha)\partial_{x}\wedge \partial_{y}\neq 0$ and so $v\wedge v^{p}\neq0$. 
 \end{proof}

\begin{lemma}\label{explosao} Let $X$ be a smooth projective surface  defined over $\corpo$ and $\mathcal{F}$ be a foliation in $X$ that is not $p$-closed. Let $Q \in \sing(\mathcal{F})$ be a $p$-reduced singular point of $\mathcal{F}$. Suppose that  $\Delta_{\mathcal{F}}$ is a reduced divisor and let $\pi_{Q}: Bl_{Q}(X) \longrightarrow X$ the blowup with center at $Q$. Then, $\pi^{*}\mathcal{F}$ defines a foliation in $Bl_{Q}(X)$ with reduced $p$-divisor.
\end{lemma}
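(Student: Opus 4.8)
The plan is to track the $p$-divisor through the blowup $\pi_Q$ by comparing how $\Delta_\mathcal{F}$ pulls back against the intrinsic $p$-divisor of $\pi_Q^*\mathcal{F}$, and then to verify reducedness separately away from and along the exceptional divisor $E$. Writing $\mathcal{G} = \pi_Q^*\mathcal{F}$, the relation between the two $p$-divisors is governed by the section $s_\mathcal{G} \in H^0(Bl_Q(X), (\Omega_\mathcal{G}^1)^{\otimes p}\otimes N_\mathcal{G})$ versus $\pi_Q^* s_\mathcal{F}$. Since $\pi_Q$ is an isomorphism over $X \setminus \{Q\}$, the strict transform of $\Delta_\mathcal{F}$ agrees with $\Delta_\mathcal{G}$ there, so on $Bl_Q(X)\setminus E$ the divisor $\Delta_\mathcal{G}$ is reduced because $\Delta_\mathcal{F}$ is. The entire problem therefore concentrates on the exceptional divisor $E$: I must compute the multiplicity $\od_E(\Delta_\mathcal{G})$ and show it is $0$ or $1$.

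First I would set up local coordinates. By the $p$-reduced hypothesis, choose an affine chart around $Q$ in which $\mathcal{F}$ is given by $v = x\partial_x + \alpha y\partial_y + O(2)$ with $\alpha\notin\mathbb{F}_p$. The exceptional divisor has two standard charts; in the chart $(x, t)$ with $y = xt$, the pullback foliation $\mathcal{G}$ is generated by $\pi_Q^* v$ after dividing out by the appropriate power of the exceptional equation $x$ to keep $T_\mathcal{G}$ saturated, and the quantity $l(Q) = \od_E(N_\mathcal{G}^* - \pi_Q^* N_\mathcal{F}^*)$ from the Notation section records exactly this normalization. Concretely, pulling back the linear part $x\partial_x + \alpha y\partial_y$ and rewriting in $(x,t)$ coordinates yields, after saturation, a vector field whose restriction to $E = \{x=0\}$ has the form $(\alpha - 1)t\partial_t$ plus higher-order data, exhibiting the two singularities of $\mathcal{G}$ on $E$ sitting over $Q$ with eigenvalue ratios $\alpha - 1$ and its reciprocal-type counterpart in the other chart.

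Next I would compute $s_\mathcal{G}$ explicitly in these coordinates. The key identity is the cocycle computation already carried out in the excerpt, $i_{v_i^p}\omega_i = g^p f\, i_{v_j^p}\omega_j$, which shows $s_\mathcal{F}$ is built from $i_{v^p}\omega$; pulling this through the blowup, the section $s_\mathcal{G}$ equals $\pi_Q^* s_\mathcal{F}$ twisted by a power of the exceptional divisor determined by $l(Q)$ and the rank-one saturation exponent. The heart of the matter is to evaluate $\od_E(s_\mathcal{G})$: since $\alpha\notin\mathbb{F}_p$, the local $p$-curvature does not vanish identically on $E$ (this is morally the $\mathbb{A}^2$ example $s_\mathcal{F} = (\alpha^p-\alpha)xy$, whose strict transform meets $E$ transversally), so I expect $\od_E(\Delta_\mathcal{G}) \in \{0,1\}$. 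Combined with the fact that the strict transform of the reduced $\Delta_\mathcal{F}$ remains reduced and, by the transversality of the two branches through a $p$-reduced singularity, meets $E$ without creating multiplicity, the full divisor $\Delta_\mathcal{G} = \pi_Q^*\Delta_\mathcal{F} - (\text{correction})\,E$ stays reduced along $E$.

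The main obstacle I anticipate is the bookkeeping at $E$ itself: precisely determining the exceptional coefficient, i.e. disentangling the contribution $l(Q)$ coming from the twist $N_\mathcal{G}^* - \pi_Q^* N_\mathcal{F}^*$ from the contribution coming from the $p$-th power structure in $(\Omega_\mathcal{G}^1)^{\otimes p}$, and confirming these combine to leave $\od_E(\Delta_\mathcal{G}) \leq 1$. One must check that the two new singularities of $\mathcal{G}$ on $E$ are again $p$-reduced (their eigenvalues being essentially $\alpha - 1$ and $(1-\alpha)/\alpha$, which lie outside $\mathbb{F}_p$ exactly because $\alpha\notin\mathbb{F}_p$), so that by Proposition~\ref{inv} and Lemma~\ref{intlocal} the local order of vanishing of the $p$-divisor at each is controlled and the reducedness is not spoiled. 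The delicate point is ruling out that $E$ appears with multiplicity $\geq 2$, which reduces to the non-vanishing of a single explicit scalar of the form $\alpha^p - \alpha$, exactly the condition $\alpha\notin\mathbb{F}_p$ provided by $p$-reducedness.
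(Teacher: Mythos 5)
Your proposal is correct and takes essentially the same route as the paper's proof: both reduce the statement to the coefficient of $E$, using $K_{\mathcal{G}}=\pi_{Q}^{*}K_{\mathcal{F}}$ and $N_{\mathcal{G}}=\pi_{Q}^{*}N_{\mathcal{F}}-E$ (your $l(Q)=1$ bookkeeping) to obtain $\Delta_{\mathcal{G}}=\pi_{Q}^{*}\Delta_{\mathcal{F}}-E=\tilde{\Delta}_{\mathcal{F}}+(m_{Q}(\Delta_{\mathcal{F}})-1)E$, and then to the multiplicity identity $m_{Q}(\Delta_{\mathcal{F}})=2$ at the $p$-reduced point. The only divergence is that the paper quotes this multiplicity from \cite[Fact 2.8]{MR3687427}, whereas you derive it directly from the nonvanishing of the quadratic term $(\alpha^{p}-\alpha)xy$ of $s_{\mathcal{F}}$ --- the same computation made self-contained; your additional verification that the two singularities of $\mathcal{G}$ on $E$ are $p$-reduced and your transversality remark are correct but not needed for reducedness (they reappear later, in the induction of Theorem B).
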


\begin{proof} Let $\mathcal{G} = \pi_{Q}^{*}\mathcal{F}$ be a foliation induced in $Bl_{Q}(X)$. Since $Q$ is $p$-reduced we have $ K_{\mathcal{G}}-\pi_{Q}^{*}K_{\mathcal{F}} = 0$ and $N_{\mathcal{G}} -\pi_{Q}^{*}N_{\mathcal{F}} = -E.
$
Indeed, in an affine open set $U\subset \mathbb{A}_{\corpo}^{2}$ the foliation is represented by a $1$-form $\omega = \omega_{1}+O(2)$ where $\omega_{1} = ydx-\alpha xdy$ with $\alpha \not \in \mathbb{F}_{p}$ and $O(2)$ containing only terms with order at least two. In a convenient coordinates system the map $\pi_{Q}:Bl_{Q}(X)\longrightarrow X$ associate $(x,t)\mapsto (x,xt)$. Since $\pi_{Q}^{*}\omega$ is a local section of  $N^{*}_{\pi_{Q}^{*}\mathcal{F}}$ we have $[\pi_{Q}^{*}\omega]_0 = N^{*}_{\pi_{Q}^{*}\mathcal{F}}$. In the other hand, since $Q$ is $p$-reduced we have $$\pi_{Q}^{*}\omega = \pi_{Q}^{*}\omega_{1}+O(2) = (xtdx-\alpha x(xdt+tdx))+O(2) = x(t(1-\alpha)dx+xdt+O(2)).$$ Denote $\tilde{\omega} = t(1-\alpha)dx+xdt+O(2)$ and note that $\tilde{\omega}$ is a local section of  $\pi_{Q}^{*}N_{\mathcal{F}}$. So, $N_{\mathcal{G}}^{*} = N^{*}_{\pi_{Q}^{*}\mathcal{F}} = [\pi_{Q}^{*}\omega]_0 = [x]_0+[\tilde{\omega}]_0  = E+\pi_{Q}^{*}N_{\mathcal{F}}^{*}$. The formula that compares $\pi_{Q}^{*}K_{\mathcal{F}}$ and $K_{\pi_{Q}^{*}\mathcal{F}}$ follows by the adjunction formula: $K_{X} = K_{\mathcal{F}}-N_{\mathcal{F}}$.
So, we have 
$$
[\Delta_{\mathcal{G}}] = pK_{\mathcal{G}}+N_{\mathcal{G}} = p\pi_{Q}^{*}K_{\mathcal{F}}+\pi_{Q}^{*}N_{\mathcal{F}}-E = \pi_{Q}^{*}[\Delta_{\mathcal{F}}] -E = \Tilde{\Delta}_{\mathcal{F}}+(m_{Q}(\Delta_{\mathcal{F}})-1)E
$$
where $\Tilde{\Delta}_{\mathcal{F}}$ denotes the strict transform of the divisor $\Delta_{\mathcal{F}}$ and $m_{Q}(\Delta_{\mathcal{F}})$ is the algebraic multiplicity of $\Delta_{\mathcal{F}}$ at $Q$. Since $Q$ is $p$-reduced, by using \cite[Fact 2.8]{MR3687427} we conclude that $m_{Q}(\Delta_{\mathcal{F}}) = 2$ so that $[\Delta_{\mathcal{G}}] = \Tilde{\Delta}_{\mathcal{F}}+E$.
\end{proof}

\begin{lemma}\label{aberto} Let $(X,H)$ be a polarized smooth  projective  surface defined over  algebraically closed field $\corpo$ with characteristic $p>2$. Let $\mathcal{N}$ be an invertible sheaf and let $\mathbb{F}ol_{\mathcal{N}}(X)  =\mathbb{P}(H^{0}(X,\Omega_{X}^{1}\otimes \mathcal{N}))$ the space of foliations on $X$ that has normal bundle $\mathcal{N}$. Suppose that that space is not empty and consider the following sets 
$$U= \{\mathcal{F} \in\mathbb{F}ol_{\mathcal{N}}(X)  \mid \Delta_{\mathcal{F}} \mbox{ is reduced } \} \quad \mbox{and}\quad V= \{\mathcal{F} \in\mathbb{F}ol_{\mathcal{N}}(X)  \mid \Delta_{\mathcal{F}} \mbox{ is prime} \}.$$
Then, $U$ and $V$ are open sets.
\end{lemma}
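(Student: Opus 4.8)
The plan is to realize every $p$-curvature section as a section of one fixed line bundle and then to argue fibrewise over a universal family. Since every $\mathcal{F}\in\mathbb{F}ol_{\mathcal{N}}(X)$ has $N_{\mathcal{F}}=\mathcal{N}$, the adjunction formula $K_{X}=K_{\mathcal{F}}-N_{\mathcal{F}}$ gives that, as a line bundle, $\Omega_{\mathcal{F}}^{1}=K_{\mathcal{F}}=K_{X}\otimes\mathcal{N}$; hence the target bundle of the $p$-curvature, $(\Omega_{\mathcal{F}}^{1})^{\otimes p}\otimes N_{\mathcal{F}}=K_{X}^{\otimes p}\otimes\mathcal{N}^{\otimes(p+1)}=:\mathcal{L}$, does not depend on $\mathcal{F}$. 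Thus every section $s_{\mathcal{F}}$ lives in the fixed space $H^{0}(X,\mathcal{L})$, and because the subscheme $\Delta_{\mathcal{F}}=(s_{\mathcal{F}})_{0}\subset X$ is reduced (resp. integral) precisely when it is a reduced divisor (resp. a prime divisor), the lemma reduces to controlling how the scheme $(s_{\mathcal{F}})_{0}$ varies with $\mathcal{F}$.

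Next I would build the universal $p$-curvature. Write $B=\mathbb{F}ol_{\mathcal{N}}(X)$, let $q\colon X\times B\to X$ and $\pi\colon X\times B\to B$ be the projections, and let $\omega_{\mathrm{univ}}\in H^{0}(X\times B,\,q^{*}(\Omega_{X}^{1}\otimes\mathcal{N})\otimes\pi^{*}\mathcal{O}_{B}(1))$ be the tautological twisted $1$-form. Carrying out the local construction of Section \ref{folheacoesemsuperficies} relatively over $B$ — replacing $v,\omega$ by the relative vector field and relative $1$-form attached to $\omega_{\mathrm{univ}}$ and forming $i_{v^{p}}\omega$ — produces a global section $\mathcal{S}$ of $q^{*}\mathcal{L}\otimes\pi^{*}\mathcal{O}_{B}(p+1)$. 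The twist $\mathcal{O}_{B}(p+1)$ is forced by the scaling $\omega\mapsto\lambda\omega$, under which $v\mapsto\lambda v$, hence $v^{p}\mapsto\lambda^{p}v^{p}$ (the correction term in Lemma \ref{Basicao} vanishes for constant $\lambda$) and $s_{\mathcal{F}}\mapsto\lambda^{p+1}s_{\mathcal{F}}$. By construction the restriction of $\mathcal{S}$ to the fibre $X\times\{[\omega]\}$ is $s_{\mathcal{F}}$, so the non-$p$-closed locus $W=\{[\omega]\in B:\mathcal{S}|_{X\times\{[\omega]\}}\neq 0\}$ is open and contains both $U$ and $V$.

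Then I would pass to the zero scheme. Over $W$ the section $\mathcal{S}$ vanishes on no fibre, so its zero scheme $\mathcal{D}_{p}:=Z(\mathcal{S})\cap(X\times W)$ is an effective Cartier divisor in $X\times W$ whose restriction to each fibre $X\times\{b\}$ is again a Cartier divisor (a nonzero section of $\mathcal{L}$ on the integral surface $X$ is a nonzerodivisor). By the standard criterion for relative effective Cartier divisors this makes $\mathcal{D}_{p}\to W$ flat, and it is proper and of finite presentation since $X$ is projective; its fibre over $b=[\omega]$ is exactly the scheme $\Delta_{\mathcal{F}}$. Grothendieck's theorem that, for a proper flat morphism of finite presentation, the loci of the base over which the geometric fibre is reduced, resp. integral, are open (EGA~IV, 12.2.4) then shows that $\{b\in W:(\mathcal{D}_{p})_{b}\ \text{geom. reduced}\}$ and $\{b\in W:(\mathcal{D}_{p})_{b}\ \text{geom. integral}\}$ are open. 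As $\corpo$ is algebraically closed these are precisely $U$ and $V$, which are therefore open in $W$, hence in $B$.

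The main obstacle is the second step: verifying that the relative $p$-curvature is a genuine algebraic global section of the asserted line bundle, and not merely a fibrewise recipe. I would handle this by checking that the gluing identity $i_{v_{i}^{p}}\omega_{i}=g_{ij}^{p}f_{ij}\,i_{v_{j}^{p}}\omega_{j}$ already established for a single foliation is compatible with base change over $B$, and that by Lemma \ref{Basicao} the operator $v^{p}$, and hence $i_{v^{p}}\omega$, depends on the coefficients of $\omega_{\mathrm{univ}}$ through an explicit (Frobenius-twisted but still algebraic) polynomial expression. This simultaneously yields the algebraicity of $\mathcal{S}$ and the $\mathcal{O}_{B}(p+1)$-homogeneity recorded above; the remaining ingredients — flatness via relative Cartier divisors and the openness statement from EGA — are then formal.
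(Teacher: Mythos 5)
Your proof is correct, but it takes a genuinely different route from the paper. The paper works with cycle spaces: it fixes the degree $\deg(\Delta_{\mathcal{F}})=pK_{X}\cdot H+(p+1)\mathcal{N}\cdot H$ (the same computation as your identification $\mathcal{L}=K_{X}^{\otimes p}\otimes\mathcal{N}^{\otimes(p+1)}$), invokes the Chow-type spaces $Z_{e}(X)$ of curves of degree $e\leq\deg(\Delta_{\mathcal{F}})$, observes that the incidence sets $S_{e}=\{(C,\mathcal{F}):2C\leq\Delta_{\mathcal{F}}\}$ and $\tilde{S}_{e}=\{(C,\mathcal{F}):C\leq\Delta_{\mathcal{F}}\}$ are closed, and writes $U$ and $V$ as complements of the $p$-closed locus together with the finitely many images $\pi_{e}(S_{e})$, $\pi_{e}(\tilde{S}_{e})$, which are closed since $\pi_{e}$ is proper. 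You instead construct the universal $p$-curvature section $\mathcal{S}$ over $X\times\mathbb{F}ol_{\mathcal{N}}(X)$, check its bihomogeneity (correctly, with degree $p+1$ in the base via Lemma \ref{Basicao}), show the universal zero scheme is a relative effective Cartier divisor, hence flat and proper over the non-$p$-closed locus $W$, and conclude by EGA IV, 12.2.4. Your route trades the Chow-variety input and the degree bookkeeping for the heavier openness theorem from EGA, and it makes explicit an ingredient the paper leaves implicit: the assertion that $2C\leq\Delta_{\mathcal{F}}$ and $C\leq\Delta_{\mathcal{F}}$ are ``closed relations'' already presupposes that $\Delta_{\mathcal{F}}$ varies algebraically with $\mathcal{F}$, which is exactly your universal section. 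One shared caveat: at points $[\omega]$ where the section $\omega$ is not saturated (it vanishes along a divisor $D$), the fibre of your universal zero scheme is $\Delta_{\mathcal{F}'}+(p+1)D$ for the saturated foliation $\mathcal{F}'$, hence automatically non-reduced; so your $U$ and $V$ agree with the paper's only under the convention that $\Delta_{\mathcal{F}}$ at such points means the naive zero divisor of $s_{\mathcal{F}}$ — the paper glosses over the same point, so this is a matter of convention rather than a gap in either argument.
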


\begin{proof} First, note that $\deg(\Delta_{\mathcal{F}})$ depends only of $X$ and $\mathcal{N}$. Indeed, we have the following formula:
$$
\deg(\Delta_{\mathcal{F}}) = pK_{\mathcal{F}}\cdot H+\mathcal{N}\cdot H = p(K_{X}+\mathcal{N})\cdot H+\mathcal{N}\cdot H = pK_{X}\cdot H+(p+1)\mathcal{N}\cdot H.
$$
Given $e \in \mathbb{Z}_{\geq1}$ let
$Z_{e}(X)$ the space that consists of all curves in
$X$ that has degree $e$. In the following, we will use the fact that $Z_{e}(X)$ is a projective algebraic variety  over  $\corpo$ (see \cite[Theorem 1.4]{MR1440180}). Define the following sets
$$
S_{e}= \{(C,\mathcal{F}) \in Z_{e}(X)\times \mathbb{F}ol_{\mathcal{N}}(X)\mid \mbox{$2C\leq \Delta_{\mathcal{F}}$} \},
$$
$$
\Tilde{S}_{e} = \{(C,\mathcal{F}) \in Z_{e}(X)\times \mathbb{F}ol_{\mathcal{N}}(X)\mid \mbox{$C\leq \Delta_{\mathcal{F}}$}\}.
$$
Since the conditions $2C\leq \Delta_{\mathcal{F}}$ and $C\leq \Delta_{\mathcal{F}}$ are closed relations we have that  $S_{e}$ and $\Tilde{S}_{e}$ are closed sets in $Z_{e}(X)\times \mathbb{F}ol_{\mathcal{N}}(X)$. Let $\pi_{e}: Z_{e}(X)\times \mathbb{F}ol_{\mathcal{N}}(X) \longrightarrow \mathbb{F}ol_{\mathcal{N}}(X)$ the natural projection. Since $\pi_{e}$ is a proper morphism we ensure that $\pi_{e}(S_{e})$ and $\pi_{e}(\Tilde{S}_{e})$ are closed sets in $\mathbb{F}ol_{\mathcal{N}}(X)$. Denote by $\mathbb{F} \subset \mathbb{F}ol_{\mathcal{N}}(X)$ the closed set consisting of $p$-closed foliations and consider the following sets:
$$
T_{1} = \mathbb{F}ol_{\mathcal{N}}(X)-(\mathbb{F}\cup\bigcup_{j=1}^{\lfloor \frac{\deg(\Delta_{\mathcal{F}})}{2}\rfloor}\pi_{j}(S_{j})) \qquad\mbox{and} \qquad T_{2} = \mathbb{F}ol_{\mathcal{N}}(X)-(\mathbb{F}\cup\bigcup_{j=1}^{\deg(\Delta_{\mathcal{F}})-1}\pi_{j}(\Tilde{S}_{j})).
$$
We claim that the following identities holds: $U = T_{1}$ and $V = T_{2}$. Indeed, the inclusions
$U \subset T_{1}$ and $V \subset T_{2}$ are trivial. Now, let  $\mathcal{G}_{1} \in T_{1}$ and $\mathcal{G}_{2} \in T_{2}$.  Suppose, by contradiction, that  $\Delta_{\mathcal{G}_{1}}$ is not reduced and that $\Delta_{\mathcal{G}_{2}}$ is not a prime divisor. In particular, there are curves $C_{1}$ and $C_{2}$ in $X$ such that $2C_{1}\leq \Delta_{\mathcal{G}_{1}}$ and $C_{2}\leq \Delta_{\mathcal{G}_{2}}$ with $\deg(C_{2})<\deg(\Delta_{\mathcal{G}_{2}})$. Since $\deg(\Delta_{\mathcal{G}_{i}})$ depends only on $K_{X}$ and $\mathcal{N}$ by computing the degrees we have:
$$
2\deg(C_{1})\leq \deg(\Delta_{\mathcal{G}_{1}}) = \deg(\Delta_{\mathcal{F}}) \qquad \mbox{and}\qquad \deg(C_{2})\leq \deg(\Delta_{\mathcal{G}_{2}})-1 =\deg(\Delta_{\mathcal{F}})-1
$$
and this implies
$$\mathcal{G}_{1}  \in \mathbb{F}\cup\bigcup_{j=1}^{\lfloor \frac{\deg(\Delta_{\mathcal{F}})}{2}\rfloor}\pi_{j}(S_{j}) \qquad \mbox{and} \qquad  \mathcal{G}_{2} \in \mathbb{F}\cup\bigcup_{j=1}^{\deg(\Delta_{\mathcal{F}})-1}\pi_{j}(\Tilde{S}_{j})$$
a contradiction.
\end{proof}

In the following we will denote the open sets above by $U(X,H,\mathcal{N})$ e $V(X,H,\mathcal{N})$.

\subsection{Global equations for foliations on \texorpdfstring{$\Projdois$}{P2}}

Let $e\in \mathbb{Z}_{\geq 0}$ and $\corpo$ be an algebraically closed field. A foliation in $\Projdois$ of degree $e$ is determined by a global section of $\Omega_{\Projdois}^{1}(e+2)$. By using the Euler exact sequence  (see \cite[Theorem 8.13]{AGHarthorne}) it follows that a foliation in $\Projdois$ is given, module elements of $\corpo^{*}$, by a $1$-form
$
\Omega = Adx+Bdy+Cdz
$ where  $A,B,C \in \corpo[x,y,z]_{e+1}$ with $\mathcal{Z}(A,B,C)\subset \Projdois$ finite set and such that $i_{R}\Omega = 0$, where $R$ is the radial vector field:$ R = x\partial_{x}+y\partial_{y}+z\partial_{z}. $

Suppose that $\corpo$ has characteristic $p>0$ and let $\mathcal{F}$ be a foliation in $\Projdois$ with normal bundle $N = \mathcal{O}_{\Projdois}(d+2)$ and suppose that $p\nmid \deg(N)$. Suppose that $\mathcal{F}$ is defined by the homogeneous $1$-form:
$
\omega = Adx+Bdy+Cdz
$
and put:
$
d\omega = (d+2)(Ldy\wedge dz-Mdx\wedge dz+Ndx\wedge dy).
$
Let $v \in \mathfrak{X}_{d}(\mathbb{A}_{\corpo}^{3})$ be the homogeneous vector field defined by:
$
v_{\omega} = L\partial_{x}+M\partial_{y}+N\partial_{z}.
$
By the \cite[Proposition 1.1.4]{MR537038} the association $\omega \mapsto v_{\omega}$ defines a bijection between the set of projective $1$-forms of degree $d+2$ and the set of homogeneous vector fields
in $\mathbb{A}_{\corpo}^{3}$ of degree $d$ that has zero divergent
$
div(v_{\omega}) = L_{x}+M_{y}+N_{z}=0.
$
The $p$-divisor is explicitly given by
$
\Delta_{\mathcal{F}} = [i_{v^{p}}\omega] \in \Div(\Projdois). 
$


\begin{ex} Let $\mathcal{F}$ be a foliation of degree two in $\mathbb{P}_{\mathbb{C}}^{2}$ defined by the projective $1$-form
    $$
        \omega = yz^{2}dx-z(4yz+2xz+2y^2)dy+(xyz+4y^2z+2y^3)dz.
    $$
Given a prime number $p\in \mathbb{Z}_{>3}$ consider $\mathcal{F}_{p}$ the foliation obtained by reducing modulo $p$ the $1$-form that defines $\mathcal{F}$. Then, $\mathcal{F}_p$ is not $p$-closed and $\Delta_{\mathcal{F}_p} = 3\{y=0\}+(p+1)\{z=0\}.$    
\end{ex}

\begin{proof} We will show first that  $\mathcal{F}_{p}$ is not $p$-closed for every prime $p>3$. Fix a prime number $p>3$ and consider the foliation $\mathcal{F}_p$ defined in $\Projdois$ by reduction modulo $p$ of the coefficients of the $1$-form $\omega.$ 
Since this problem is local, we can restrict the foliation to the open set
$U = D_{+}(z)\cong \mathbb{A}_{\corpo}^{2}$. In $U$, the foliation is given by the  vector field $v = (4y+2x+2y^{2})\partial_{x}+y\partial_{y}$. Observe that $v(x) = 4y+2x+2y^{2}$. An inductive argument shows that for every $k\geq 3$ we have
$$v^{k}(x) = 2^{2}(2(2^{k-2}+2^{k-3}+\cdots+2+1)+1)y+k2^{k}y^{2}+2^{k}x.$$ 
So, we conclude $v^{p}(x) = 4y+2^{p}x = 4y+2x$, since $0 = 2^{p-1}-1 = (2^{p-2}+\cdots+1)$ in $\mathbb{F}_p$. In particular, we have 
$$
v^{p}(x)v(y)-v^{p}(y)v(x) = y(v^{p}(x)-4y-2y^{2}-2x) = y(-2y^{2}) = -2y^{3} \neq 0.
$$ 
So, the foliation defined by $v$ is not $p$-closed, if $p>3$. This implies that the $p$-divisor of $\mathcal{F}_p$ is given by $\Delta_{\mathcal{F}_p} = 3\{y=0\}+(p+1)\{z=0\}$.
\end{proof}

\subsection{Applications: foliations on \texorpdfstring{$\mathbb{P}_{\mathbb{C}}^{2}$}{P2} without algebraic invariant curves}

The next  two propositions show that, in some cases, the irreducibility of the $p$-divisor for foliations in $\Projdois$ is relate with holomorphic foliations in $\mathbb{P}_{\mathbb{C}}^{2}$ without algebraic invariant curves.

\begin{prop}\label{aplicaI} Let $\mathcal{F}$ be a non-dicritical foliation in $\mathbb{P}_{\mathbb{C}}^{2}$ defined by a projective $1$-form $\Omega = Adx+Bdy+Cdz$. Let $K$ be a number field and suppose that $A,B,C \in \mathcal{O}_{K}[x,y,z]_{d+1}$, where $\mathcal{O}_{K}$ is the integer ring of $K$. Let  $\mathfrak{m} \in \spm(\mathcal{O}_{K})$ of characteristic $p$ and suppose that $p$ does not divides $d+2$. Let $\mathcal{F}_{p}$ be a foliation in $\mathbb{P}_{k(\mathfrak{m})}^{2}$ obtained by reduction modulo  $\mathfrak{m}$ of the coefficients of $\Omega$. If $\Delta_{\mathcal{F}_{p}}$ is irreducible then $\mathcal{F}$ has no algebraic solutions. This can be used to give a simple proof that the Jouanolou foliations of degree two and tree has no algebraic solutions.
\end{prop}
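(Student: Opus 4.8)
The plan is to argue by contradiction via reduction modulo $\mathfrak{m}$, confronting the \emph{a priori} bound on the degree of invariant curves of a non-dicritical foliation in characteristic zero with the large degree that irreducibility of $\Delta_{\mathcal{F}_p}$ imposes on invariant curves of $\mathcal{F}_p$. Assume $\mathcal{F}$ has an algebraic solution. Passing to an irreducible component, I fix an irreducible $\mathcal{F}$-invariant curve $C\subset \mathbb{P}^2_{\mathbb{C}}$. Since $\mathcal{F}$ is non-dicritical, Carnicer's theorem on the Poincaré problem in the non-dicritical case yields $\deg C\le d+2$. Invariance of a curve $\{F=0\}$ of bounded degree is a polynomial condition on the coefficients of $F$ defined over $K\subset\overline{\mathbb{Q}}$, so the (nonempty) scheme of invariant curves of degree $\le d+2$ has a $\overline{\mathbb{Q}}$-point; hence I may assume $C$ is defined over a number field $L\supseteq K$.

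I would then reduce $C$ modulo a prime $\mathfrak{q}$ of $L$ lying over $\mathfrak{m}$. Choosing a homogeneous equation $F\in\mathcal{O}_L[x,y,z]$ of $C$ which is primitive, its reduction $\bar F$ is nonzero and homogeneous of the same degree, so $\bar C=\{\bar F=0\}$ is a curve in $\mathbb{P}^2_{\overline{\mathbb{F}_p}}$ with $\deg\bar C=\deg C\le d+2$. Reducing the invariance relation $v(F)\in (F)$ (for a vector field $v$ defining $\mathcal{F}$) modulo $\mathfrak{q}$ shows that $\bar C$ is $\mathcal{F}_p$-invariant, and therefore so is each of its irreducible components.

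Now I invoke Proposition \ref{inv}: each irreducible component of $\bar C$ lies in the support of $\Delta_{\mathcal{F}_p}$. As $\Delta_{\mathcal{F}_p}$ is irreducible it equals a single prime divisor $P$, so every component of $\bar C$ coincides with $P$ and $\deg\bar C\ge\deg P=\deg\Delta_{\mathcal{F}_p}$. Using $p\nmid d+2$ to apply the explicit description of the $p$-divisor from the preceding subsection gives $\deg\Delta_{\mathcal{F}_p}=p(d-1)+d+2$. Combining the two estimates yields $p(d-1)+d+2\le \deg\bar C=\deg C\le d+2$, hence $p(d-1)\le 0$, which is impossible for $d\ge 2$. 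This contradiction proves that $\mathcal{F}$ has no algebraic solution. (The hypothesis that $\Delta_{\mathcal{F}_p}$ is a divisor already forces $\mathcal{F}_p$ not to be $p$-closed, ruling out the case where $\mathcal{F}$ has a rational first integral, whose reduction would be $p$-closed.)

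The main obstacle, and the reason the non-dicritical hypothesis is indispensable, is the degree bound for $C$: without Carnicer's theorem there is no control on $\deg C$, and it is precisely the clash $\deg C\le d+2 < p(d-1)+d+2=\deg P$ that forbids a solution already for the single prime $\mathfrak{m}$. The remaining points are routine but must be checked carefully: the descent of an invariant curve to $\overline{\mathbb{Q}}$, and the fact that a primitive integral equation reduces to an invariant curve of the same degree, so that the reduction step is unconditional. For the Jouanolou foliations of degrees two and three one then chooses a convenient prime and verifies by direct computation that $\Delta_{\mathcal{F}_p}$ is irreducible.
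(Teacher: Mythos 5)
Your proposal is correct and follows essentially the same route as the paper: contradiction via Carnicer's bound $\deg C\le d+2$ for the non-dicritical case, descent of the invariant curve to a number field, reduction modulo a prime over $\mathfrak{m}$ preserving invariance, Proposition \ref{inv} forcing each component of the reduction into $\Delta_{\mathcal{F}_p}$, and the degree clash $p(d-1)+d+2>d+2$. Your handling of the descent (enlarging to $L\supseteq K$ and reducing at $\mathfrak{q}\mid\mathfrak{m}$, with a primitivity argument for the reduction) is a slightly more careful rendering of the paper's appeal to Galois automorphisms, and your explicit remark that the contradiction needs $d\ge 2$ matches an implicit assumption in the paper's own degree comparison.
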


\begin{proof} Suppose, by contradiction, that $\mathcal{F}$ has an algebraic solution $C$. By using Galois automorphism, we can assume that $C$ is defined by an irreducible polynomial over $\mathcal{O}_{K}$. In particular, $C$ is reduced as a curve in $\mathbb{P}_{\mathbb{C}}^{2}$. Let $F \in \mathcal{O}_{K}[x,y,z]$ be the irreducible polynomial defining $C$. By \cite[theorem]{MR1298714} we know that $\deg(F) \leq d+2$. Let $F\otimes k(\mathfrak{m})$ be the polynomial obtained by reduction modulo $\mathfrak{m}$ of $F$. Note that the reduction modulo  $\mathfrak{m}$ preserves the invariance in the sense that the curve describe by $F\otimes k(\mathfrak{m})$ is invariant by the foliation $\mathcal{F}_{p}$. Let $G \in k(\mathfrak{m})[x,y,z]$ be an irreducible factor of $F$. We have that the curve $\{G = 0\} \subset \mathbb{P}_{k(\mathfrak{m})}^{2}$ is $\mathcal{F}_{p}$-invariant and by the Proposition \ref{inv} we conclude that $\{G = 0\} \leq \Delta_{\mathcal{F}_{p}}.$ Since $\Delta_{\mathcal{F}}$ is irreducible we have $\{G=0\} = \Delta_{\mathcal{F}}$. But, this is a contraction by comparison of degrees, since  $p(d-1)+d+2 = \deg(\Delta_{\mathcal{F}_p})>d+2\geq \deg(G)$.
\end{proof}

Let $\mathcal{F}$ be a foliation in $\mathbb{P}_{\mathbb{C}}^{2}$ and $Q$ a reduced singularity of $\mathcal{F}$. Suppose that $Q$ is not degenerated. In this case, we know that if 
$\alpha$ is the eigenvalue of $Q$ then $\alpha \not\in \mathbb{Q}_{+}$. By \cite[Appendice II]{MM2}
we know that there is an analytic coordinate system 
such that the foliation is given by the $1$-form
$
\omega = -\alpha y(1+b(x,y))dx+x(1+a(x,y))dy
$
with $a,b \in \langle x,y \rangle$. In particular,
if $C$ is a reduced algebraic curve that is 
$\mathcal{F}$-invariant with $Q\in C$ then in the analytic coordinate system above the $C$  is given by  $\{x= 0\}$, $\{y = 0\}$ or 
$\{xy = 0\}$. So, computing the Camacho-Sad index we conclude that

\[
   CS(\mathcal{F},C,Q)= \begin{cases} 
            1/\alpha & \text{if}\ C = \{x = 0\},\\ 
            \alpha & \text{if}\ C = \{y = 0\},\\
            \alpha+\alpha^{-1}+2 & \text{if}\ C = \{xy =0\}.
        \end{cases}
\]
In particular, we have that the norm of the index is bounded by a constant which depends only on eigenvalue $\alpha$ of $Q$. More precisely, we have
$|CS(\mathcal{F},C,Q)| \leq |\alpha|+|\alpha|^{-1}+2$. We will use this in the following example.

\begin{prop}\label{aplicaII}  Let $\mathcal{F}$ be a non-degenerated foliation in $\mathbb{P}_{\mathbb{C}}^{2}$ defined by a projective $1$-form $\omega$ with coefficients in $\mathbb{Z}$. Suppose that $\omega$ is primitive, that is, the greatest common divisor of the coefficients of $\omega$ is equal to $1$. Suppose that $\mathcal{F}$ is reduced and define
$$\alpha_{\mathcal{F}}:= \sup\{|\alpha(Q)|\mid Q \in \sing(\mathcal{F})\} \quad \mbox{and}\quad \overline{\alpha}_{\mathcal{F}}:= \sup\{|\alpha(Q)|^{-1}\mid Q \in \sing(\mathcal{F})\}.$$
Let $\beta_{\mathcal{F}} = \alpha_{\mathcal{F}}+\overline{\alpha}_{\mathcal{F}}+2$ and $p$ be a prime number such that $p>(d_{\mathcal{F}}+1)\beta_{\mathcal{F}}^{1/2}$ and suppose that $\Delta_{\mathcal{F}_p}$ is a prime divisor. Then, $\mathcal{F}$ has no algebraic solutions.
\end{prop}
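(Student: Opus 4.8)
The plan is to argue by contradiction, extracting a numerical bound on the degree of any algebraic solution from the complex geometry (via the Camacho--Sad index theorem) and then contradicting it with the size of the $p$-divisor after reduction modulo $p$. Write $d := d_{\mathcal{F}}$. Suppose $\mathcal{F}$ admits an algebraic solution. Exactly as in the proof of Proposition \ref{aplicaI}, after using Galois automorphisms and clearing denominators we may take it to be a reduced curve $C$ cut out by a primitive polynomial $F\in\mathbb{Z}[x,y,z]$ of degree $e=\deg(C)$; since $C$ is $\mathcal{F}$-invariant it is smooth away from $\sing(\mathcal{F})$, so its singular points lie among those of $\mathcal{F}$.

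First I would bound $e$ over $\mathbb{C}$. Because $C$ is an invariant compact curve in $\mathbb{P}_{\mathbb{C}}^{2}$, the Camacho--Sad index theorem gives $e^{2}=C\cdot C=\sum_{Q\in\sing(\mathcal{F})\cap C} CS(\mathcal{F},C,Q)$. At each such $Q$ the singularity of $\mathcal{F}$ is non-degenerate and reduced, so the local normal form $\omega=-\alpha y(1+\cdots)dx+x(1+\cdots)dy$ recalled before the statement applies, and the curve is locally $\{x=0\}$, $\{y=0\}$ or $\{xy=0\}$; hence $|CS(\mathcal{F},C,Q)|\leq |\alpha(Q)|+|\alpha(Q)|^{-1}+2\leq \alpha_{\mathcal{F}}+\overline{\alpha}_{\mathcal{F}}+2=\beta_{\mathcal{F}}$. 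A degree-$d$ foliation on the projective plane has exactly $d^{2}+d+1\leq (d+1)^{2}$ singular points, so the Camacho--Sad sum has at most $(d+1)^{2}$ terms. Therefore $e^{2}\leq (d+1)^{2}\beta_{\mathcal{F}}$, and the hypothesis on $p$ yields $\deg(C)=e\leq (d_{\mathcal{F}}+1)\beta_{\mathcal{F}}^{1/2}<p$.

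Now I would pass to characteristic $p$. Since $\Delta_{\mathcal{F}_{p}}$ is assumed to be a prime divisor it is in particular defined, so $\mathcal{F}_{p}$ is not $p$-closed; and the reduction $C_{p}=\{F\equiv 0\bmod p\}$ is $\mathcal{F}_{p}$-invariant of degree $e$ (primitivity of $F$ gives $F\bmod p\neq 0$, and a nonzero homogeneous polynomial keeps its degree). Let $G$ be any irreducible factor of $F\bmod p$. Then $\{G=0\}$ is an irreducible $\mathcal{F}_{p}$-invariant curve, so Proposition \ref{inv} gives $\{G=0\}\leq \Delta_{\mathcal{F}_{p}}$; primality of $\Delta_{\mathcal{F}_{p}}$ forces $\{G=0\}=\Delta_{\mathcal{F}_{p}}$, whence $\deg(G)=\deg(\Delta_{\mathcal{F}_{p}})=p(d-1)+d+2$ by the degree formula for the $p$-divisor. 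For $d\geq 2$ we then obtain $p(d-1)+d+2=\deg(G)\leq \deg(C_{p})=e<p\leq p(d-1)+d+2$, a contradiction, so $\mathcal{F}$ has no algebraic solutions.

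The reduction to an integral, irreducible $C$ and the bookkeeping of degrees are routine. The step I expect to be the main obstacle is the degree bound of the second paragraph: one must be careful that the Camacho--Sad sum is taken precisely over $\sing(\mathcal{F})\cap C$ (so that the uniform bound $\beta_{\mathcal{F}}$ applies term by term, including the nodal case $CS=\alpha+\alpha^{-1}+2$), and that replacing the number of singular points of $\mathcal{F}$ on $C$ by the crude total $(d+1)^{2}$ is legitimate. This is exactly where non-degeneracy and reducedness of $\mathcal{F}$ enter, and it is the only place where the quantitative hypothesis $p>(d_{\mathcal{F}}+1)\beta_{\mathcal{F}}^{1/2}$ is consumed.
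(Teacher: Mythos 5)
Your proof is correct and follows essentially the same route as the paper: contradiction via Galois descent to an integral invariant curve, the Camacho--Sad bound $d_C^2\leq\#\sing(\mathcal{F})\,\beta_{\mathcal{F}}\leq(d_{\mathcal{F}}+1)^2\beta_{\mathcal{F}}$ forcing $\deg(C)<p$, then reduction modulo $p$, invariance of each irreducible factor (your bound $e<p$ is exactly what rules out $p$-fold factors, the point the paper makes explicitly via its ``no $p$-factor'' remark), Proposition \ref{inv}, primality of $\Delta_{\mathcal{F}_p}$, and the degree comparison $\deg(\Delta_{\mathcal{F}_p})=p(d-1)+d+2$. Your explicit restriction to $d\geq 2$ in the final comparison is shared implicitly by the paper's own ``contradiction by degree comparison,'' so it is not a deviation.
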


\begin{proof} Suppose by contraction that $\mathcal{F}$ has an algebraic solution $C$. By using Galois automorphism we can assume that $C$ is given by an irreducible polynomial defined over $\mathbb{Z}$. Since $\mathcal{F}$ is not degenerated and reduced, using the Camacho-Sad formula (see \cite[Theorem 3.2]{MR3328860}) we have
$$
d_{C}^{2} = C^{2} \leq \sum_{Q\in \sing(\mathcal{F})\cap C} |CS(\mathcal{F},C;Q)|
\leq \#\sing(\mathcal{F})\beta_{\mathcal{F}} = (d_{\mathcal{F}}^{2}+d_{\mathcal{F}}+1)\beta_{\mathcal{F}}\leq (d_{\mathcal{F}}+1)^{2}\beta_{\mathcal{F}} 
$$
The formula above implies that $d_{C}\leq (d_{\mathcal{F}}+1)\beta_{\mathcal{F}}^{1/2} < p$. So, reducing $C$ modulo $p$ we ensure that the reduction 
$C\otimes \mathbb{F}_{p}$ has no $p$-factor, that  is, there is no an effective divisor, $D$, such that $pD\leq C\otimes \mathbb{F}_p$. Let $E$ be an irreducible factor of $C\otimes \mathbb{F}_p$.  By Proposition \ref{inv} we know that $E$ is a $\mathcal{F}_p$-invariant irreducible curve. Since the $p$-divisor of  $\mathcal{F}_{p}$ is irreducible we conclude that $\Delta_{\mathcal{F}_{p}} = E$. But, this is a contradiction by degree comparison.
\end{proof}

The interesting fact about Proposition \ref{aplicaI} and Proposition \ref{aplicaII} is that we get information about the algebraicity of foliations on $\mathbb{P}_{\mathbb{C}}^{2}$ by using only a maximal ideal.

\subsection{Global equations for foliations on \texorpdfstring{$\Sigma_d$}{Sd}}

In this subsection we recall the construction of the Hirzebruch surfaces and how to represent globally foliations in that surfaces. The reference for this section is \cite{galindo2020foliations}.

Let $\mathbb{G}_{m} = \corpo^{*}$ the multiplicative  group of $\corpo$ and $d \in \mathbb{Z}_{\geq 0}$.  Let  $\mu_{d}$ 
the action of $\mathbb{G}_{m}^{2}$ in $X = (\mathbb{A}_{\corpo}^{2}-0)\times(\mathbb{A}_{\corpo}^{2}-0)$ defined by the morphism:
$$\begin{aligned}
  \mu_{d}: \mathbb{G}_{m}^{2} \times X & \longrightarrow X &  \\
  ((a,b),(x_{0},x_{1};y_{0},y_{1})) &\mapsto (ax_{0},ax_{1},by_{0},\frac{b}{a^{d}}y_{1}).
\end{aligned}$$
The quotient $\Sigma_{d} = X/\mu_{d}$ is a smooth surface defined over $\corpo$ and is it isomorphic to the $d$-Hirzebruch surface, that is, the surface $ \mathbb{P}(\mathcal{O}_{\mathbb{P}_{\corpo}^{1}}\oplus \mathcal{O}_{\mathbb{P}_{\corpo}^{1}}(d))$. It is a ruled surface 
with structural morphism to  $\Projum$ defined by
$$\begin{aligned}
         \pi \colon \Sigma_{d}  & \longrightarrow   \Projum   &  \\
       \overline{(x_{0},x_{1};y_{0},y_{1})}  & \mapsto [x_{0}:x_{1}].
    \end{aligned}$$
Let $d_{1},d_{2} \in \mathbb{Z}$ and $G \in \corpo[x_{0},x_{1},y_{0},y_{1}]$. We say that $G$ is bi-homogeneous of the bi-degree $(d_{1},d_{2})$ if for any monomial
 $x_{0}^{a_{0}}x_{1}^{a_{1}}y_{0}^{b_{0}}y_{1}^{b_{1}}$in the support of $G$ we have $d_{1} = a_{0}+a_{1}-db_{1}$ and $d_{2} = b_{0}+b_{1}$. Let $F$ and $M_{d}$ curves in $\Sigma_{d}$ such that $F$ is a fiber of the structural projection $\pi:\Sigma_{d}\longrightarrow \Projum$ and $M_{d}$ is a section of $\pi$ which satisfies the conditions $M_d\cdot F = 1$ and $M_d^{2}=d$. Then $\{F,M_{d}\}$ forms a base for the vector space $\Num_{\mathbb{Q}}(\Sigma)$. If $D \in \Div(\Sigma_{d})$ is a divisor such that $D\equiv d_{1}F+d_{2}M_d$ then the global sections of the $\mathcal{O}_{\Sigma_{d}}(D)$ correspond to  bi-homogeneous polynomials of  bi-degree $(d_{1},d_{2})$ in $\Sigma_{d}$. In the following, we will use the following description of foliations in Hirzebruch surfaces (see \cite[Proposition 3.2]{galindo2020foliations}).

\begin{prop} Let $d\in \mathbb{Z}_{\geq 0}$, $d_{1},d_{2} \in \mathbb{Z}$ and $N = \mathcal{O}_{\Sigma_{d}}(d_{1}-d+2,d_{2}+2)$. Then, any foliation $\mathcal{F}$ in $\Sigma_{d}$ with normal bundle 
 $N$ is uniquely determined, module $\corpo^{*}$, by a differential $1$-form of the type $\Omega = A_{0}dx_{0}+A_{1}dx_{1}+B_{0}dy_{0}+B_{1}dy_{1}$
where $A_{0},A_{1} \in H^{0}(\Sigma_{d},\mathcal{O}(d_{1}-d+1,d_{2}+2))$, $B_{0} \in H^{0}(\Sigma_{d},\mathcal{O}_{\Sigma_{d}}(d_{1}-d+2,d_{2}+1))$ and $B_{1} \in H^{0}(\Sigma_{d},\mathcal{O}_{\Sigma_{d}}(d_{1}+2,d_{2}+1))$ are bi-homogeneous and satisfies the following conditions: $
x_{0}A_{0}+x_{1}A_{1}-dy_{1}B_{1} = 0$ and $
y_{0}B_{0}+y_{1}B_{1} = 0.$
\end{prop}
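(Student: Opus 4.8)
The plan is to compute the vector space $H^{0}(\Sigma_{d},\Omega_{\Sigma_{d}}^{1}\otimes N)$ directly in the homogeneous (Cox) coordinates $x_{0},x_{1},y_{0},y_{1}$, in exactly the same spirit in which the Euler sequence and the radial field $R$ were used to describe foliations on $\Projdois$. Recall that $\Sigma_{d}=X/\mu_{d}$ with $X=(\mathbb{A}_{\corpo}^{2}-0)\times(\mathbb{A}_{\corpo}^{2}-0)$ and quotient map $q\colon X\to \Sigma_{d}$. First I would check that the $\mathbb{G}_{m}^{2}$-action is free: if $(a,b)$ fixes a point then $a=1$ because $(x_{0},x_{1})\neq 0$, and then $b=1$ because $(y_{0},y_{1})\neq 0$. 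Hence $q$ is a principal $\mathbb{G}_{m}^{2}$-bundle, in particular a smooth surjective morphism of relative dimension $2$. Since a foliation with normal bundle $N$ is a nonzero class in $\mathbb{P}(H^{0}(\Sigma_{d},\Omega_{\Sigma_{d}}^{1}\otimes N))$, the statement reduces to identifying this space with the space of $1$-forms $\Omega=A_{0}dx_{0}+A_{1}dx_{1}+B_{0}dy_{0}+B_{1}dy_{1}$ of the prescribed type, the factor $\corpo^{*}$ coming from the projectivization.

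Next I would record the two infinitesimal generators of the action, obtained by differentiating $\mu_{d}$ in $a$ and $b$ at the identity: the vertical (radial) vector fields $R_{1}=x_{0}\partial_{x_{0}}+x_{1}\partial_{x_{1}}-dy_{1}\partial_{y_{1}}$ and $R_{2}=y_{0}\partial_{y_{0}}+y_{1}\partial_{y_{1}}$. They trivialize the relative tangent sheaf $T_{X/\Sigma_{d}}$, so from the relative cotangent sequence
\begin{equation*}
0\longrightarrow q^{*}\Omega_{\Sigma_{d}}^{1}\longrightarrow \Omega_{X}^{1}\longrightarrow \Omega_{X/\Sigma_{d}}^{1}\longrightarrow 0
\end{equation*}
a $1$-form on $X$ lies in $q^{*}\Omega_{\Sigma_{d}}^{1}$ precisely when it is horizontal, that is annihilated by $R_{1}$ and $R_{2}$. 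By equivariant descent for the principal bundle $q$, a global section of $\Omega_{\Sigma_{d}}^{1}\otimes N$ pulls back to a $\mathbb{G}_{m}^{2}$-semi-invariant horizontal $1$-form on $X$ whose weight is the class of $N$, and conversely every such form descends. Since $X$ is the complement of a codimension-two subset of $\mathbb{A}_{\corpo}^{4}$, normality lets these forms extend across the deleted loci, so the coefficients $A_{i},B_{i}$ are genuine polynomials, i.e. global sections of the indicated line bundles.

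It then remains to read off the degrees and the relations. The differentials transform with the same weights as the coordinates: $dx_{0},dx_{1}$ have bi-degree $(1,0)$, $dy_{0}$ has bi-degree $(0,1)$, and $dy_{1}$ has bi-degree $(-d,1)$. Requiring $\Omega$ to be bi-homogeneous of bi-degree $(d_{1}-d+2,d_{2}+2)$, the class of $N$, forces $A_{0},A_{1}$ to have bi-degree $(d_{1}-d+1,d_{2}+2)$, $B_{0}$ to have bi-degree $(d_{1}-d+2,d_{2}+1)$, and $B_{1}$ to have bi-degree $(d_{1}+2,d_{2}+1)$, which are exactly the stated memberships. Contracting with the two radial fields gives
\begin{equation*}
i_{R_{1}}\Omega=x_{0}A_{0}+x_{1}A_{1}-dy_{1}B_{1},\qquad i_{R_{2}}\Omega=y_{0}B_{0}+y_{1}B_{1},
\end{equation*}
so horizontality is precisely the pair of relations $x_{0}A_{0}+x_{1}A_{1}-dy_{1}B_{1}=0$ and $y_{0}B_{0}+y_{1}B_{1}=0$.

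The main obstacle is the descent step of the second paragraph: one must pin down the equivariant structure on $q^{*}(\Omega_{\Sigma_{d}}^{1}\otimes N)$ so that invariant horizontal forms correspond to sections downstairs of the correct weight, and justify the polynomial extension across the removed loci via normality. Once this identification is in place, the computation of bi-degrees and the two contraction identities is routine bookkeeping, entirely parallel to the radial-field description on $\Projdois$.
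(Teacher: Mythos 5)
Your proposal is correct, but it cannot be compared with an internal argument because the paper does not prove this proposition at all: it simply imports the statement from \cite[Proposition 3.2]{galindo2020foliations}. What you have written is the natural self-contained proof, and it is the exact analogue of the paper's own treatment of $\Projdois$ via the Euler sequence and the radial field $R$: the quotient presentation $\Sigma_{d}=X/\mu_{d}$ makes $q\colon X\to\Sigma_{d}$ a $\mathbb{G}_{m}^{2}$-torsor (Zariski-locally trivial, since torsors under tori satisfy Hilbert 90), the two infinitesimal generators $R_{1}=x_{0}\partial_{x_{0}}+x_{1}\partial_{x_{1}}-dy_{1}\partial_{y_{1}}$ and $R_{2}=y_{0}\partial_{y_{0}}+y_{1}\partial_{y_{1}}$ trivialize $T_{X/\Sigma_{d}}$, horizontality is contraction against them, and Hartogs across the codimension-two locus plus semi-invariance yields bi-homogeneous polynomial coefficients; your weight bookkeeping ($dx_{i}$ of bi-degree $(1,0)$, $dy_{0}$ of $(0,1)$, $dy_{1}$ of $(-d,1)$) reproduces the stated degrees and the two Euler-type relations exactly, matching the paper's convention for bi-degree. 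Two small points are worth making explicit since the ambient field has characteristic $p$. First, when $p\mid d$ the field $R_{1}$ degenerates to $x_{0}\partial_{x_{0}}+x_{1}\partial_{x_{1}}$, but the pair $(R_{1},R_{2})$ remains pointwise linearly independent on $X$ (some $x_{i}\neq 0$ forces the first coefficient to vanish, then some $y_{j}\neq 0$ forces the second), so the vertical sheaf is still freely generated and the descent argument is characteristic-free; the relation $x_{0}A_{0}+x_{1}A_{1}-dy_{1}B_{1}=0$ is then read with $d$ reduced in $\corpo$, consistently with the statement. Second, your reduction "a foliation is a nonzero class in $\mathbb{P}(H^{0}(\Sigma_{d},\Omega_{\Sigma_{d}}^{1}\otimes N))$" is slightly loose: foliations correspond only to sections whose zero locus has codimension two (the saturated ones), but since the proposition asserts only that a foliation \emph{is determined by} such a form modulo $\corpo^{*}$, identifying the ambient space of twisted $1$-forms with the space of bi-homogeneous $\Omega$ satisfying the two relations is all that is needed, and your argument delivers precisely that.
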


\section{The \texorpdfstring{$p$}{p}-divisor for foliations on \texorpdfstring{$\Projdois$}{P2}}\label{pdivisoremP2}

In this section, we investigate the structure of the $p$-divisor for generic foliations in the projective plane. In the next section, $\corpo$ will be denote an algebraically closed field that has characteristic $p>0$. The following lemma will be important to the next sections.

\begin{lemma}\label{pullback} Let $F \in \corpo[x_{0},\ldots,x_{n}]$ be a reduced polynomial and $l \in \mathbb{Z}$ positive integer with $\mdc(l,p) = 1$. Suppose that $x_{0}\nmid F$. Then, $F(x_{0}^{l},x_{1},\ldots,x_{n})$ is reduced.
\end{lemma}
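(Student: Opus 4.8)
The plan is to read the substitution $x_0\mapsto x_0^{l}$ as a finite morphism that is \emph{étale away from the hyperplane} $\{x_0=0\}$, exploit the fact that étale pullback preserves reducedness, and then deal with the hyperplane $\{x_0=0\}$ by hand using the hypothesis $x_0\nmid F$. Concretely, I set $R=\corpo[x_0,\dots,x_n]$ and let $\phi\colon R\to R$ be the $\corpo$-algebra endomorphism with $\phi(x_0)=x_0^{l}$ and $\phi(x_i)=x_i$ for $i\geq 1$, so that $G:=F(x_0^{l},x_1,\dots,x_n)=\phi(F)$. This $\phi$ is the comorphism of the finite map $f\colon\mathbb{A}_{\corpo}^{n+1}\to\mathbb{A}_{\corpo}^{n+1}$, $(x_0,\dots,x_n)\mapsto(x_0^{l},x_1,\dots,x_n)$, and scheme-theoretically $f^{-1}(V(F))=V(G)$ since $(F)$ pulls back to $(G)$. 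As $R$ is a UFD, proving that $G$ is reduced is the same as proving $V(G)$ reduced, i.e. that $G$ is squarefree.

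First I would restrict $f$ to the open set $D(x_0)=\{x_0\neq 0\}$ of the source. There $f$ is finite, flat (free of rank $l$), and unramified: the relative differentials are generated by $dx_0$ modulo $d(x_0^{l})=l\,x_0^{l-1}\,dx_0$, and $l\,x_0^{l-1}$ is a unit on $D(x_0)$ precisely because $\mdc(l,p)=1$ forces $l\neq 0$ in $\corpo$. Hence $f$ is étale over $\{x_0\neq 0\}$. Since $F$ is reduced, $V(F)$ is reduced, so its restriction to $\{x_0^{l}\neq 0\}$ is reduced; pulling back along the étale map $f$ keeps it reduced, so $V(G)\cap D(x_0)$ is reduced. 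Translating this into the localization $R_{x_0}=\corpo[x_0^{\pm1},x_1,\dots,x_n]$, the ideal $(G)$ is radical there. Because $R_{x_0}$ is the localization of the UFD $R$ inverting the prime $x_0$, its irreducibles are (up to units) those of $R$ other than $x_0$; writing $G=x_0^{a}\prod_i P_i^{e_i}$ with the $P_i\neq x_0$ distinct irreducibles, radicality of $(G)$ in $R_{x_0}$ forces every $e_i=1$. Thus $G=x_0^{a}\cdot(\text{squarefree and coprime to }x_0)$.

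It then remains to eliminate the factor $x_0^{a}$ with $a\geq 1$, and this is exactly where $x_0\nmid F$ enters. Writing $F=\sum_j c_j(x_1,\dots,x_n)\,x_0^{j}$, the hypothesis says $c_0\neq 0$; since $G=\sum_j c_j\,x_0^{lj}$ has the same nonzero constant term $c_0$ in $x_0$, we get $x_0\nmid G$, hence $a=0$. Combining the two parts shows $G$ is squarefree, i.e. reduced.

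The step that needs the most care is the étale assertion: the tameness hypothesis $\mdc(l,p)=1$ is exactly what makes $f$ unramified over $\{x_0\neq 0\}$ (it fails when $p\mid l$, and indeed $F(x_0^{p})=F(x_0)^{p}$-type phenomena would destroy reducedness), and one must confine the argument to $\{x_0\neq 0\}$ because $f$ genuinely ramifies along $x_0=0$. If one prefers to avoid invoking that étale morphisms preserve reducedness, the same conclusion follows valuation-theoretically: for an irreducible $H\neq x_0$ dividing $G$, the inclusion of DVRs $\corpo[x_0^{l},x_1,\dots,x_n]_{\mathfrak p}\subset R_{(H)}$ is unramified away from $x_0=0$, so $v_H(G)$ equals the order of $F$ along the contracted prime $\mathfrak p$, which is at most $1$ by reducedness of $F$.
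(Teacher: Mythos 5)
Your proof is correct, but it follows a genuinely different route from the paper's. The paper argues purely algebraically: by the Gauss Lemma it reduces to one variable over the function field $K=\corpo(x_{1},\ldots,x_{n})$, handles an irreducible $F$ via the criterion $\mdc(g,dg/dx_{0})=1$ for $g(x_{0})=F(x_{0}^{l})$, and handles the general reduced case by taking two distinct irreducible factors $G,H$ of $F$, writing a B\'ezout identity $A(x_{0})G+B(x_{0})H=1$ in $K[x_{0}]$ and specializing $x_{0}\mapsto x_{0}^{l}$ to see that the substituted factors remain coprime. You instead read the substitution as the finite map $f\colon(x_{0},\ldots,x_{n})\mapsto(x_{0}^{l},x_{1},\ldots,x_{n})$, observe that it is finite \'etale of degree $l$ over $D(x_{0})$ exactly because $\mdc(l,p)=1$ makes $l\,x_{0}^{l-1}$ a unit there, invoke the standard fact that \'etale pullback preserves reducedness, and finish with elementary UFD bookkeeping in the localization $R_{x_{0}}$ together with the constant-term observation that $x_{0}\nmid F$ forces $x_{0}\nmid G$ --- the same place where the paper uses the hypothesis $x_{0}\nmid F$, though the paper uses it only to guarantee $x_{0}$ actually occurs. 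Both proofs locate the tameness hypothesis $\mdc(l,p)=1$ in the invertibility of $l$, but your route buys some robustness: the paper's derivative step asserts $dg/dx_{0}=lF'(x_{0}^{l})x_{0}^{l-1}\neq 0$, which presupposes $\partial F/\partial x_{0}\neq 0$, and this is delicate because $K$ is not perfect --- an irreducible polynomial such as $F=x_{0}^{p}-x_{1}$ has $\partial F/\partial x_{0}=0$, so the $\mdc(g,dg/dx_{0})$ test is vacuous there even though the conclusion of the lemma still holds (and is delivered by your argument, which never differentiates in the $x_{0}$-direction alone). What the paper's proof buys in exchange is that it is entirely elementary, needing only the Gauss Lemma and a B\'ezout identity, whereas you must quote descent of reducedness along \'etale morphisms; your closing valuation-theoretic remark is essentially a hands-on substitute for that input, so either version of your argument is complete.
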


\begin{proof} First, note that if $x_0$ does not occurs in $F$ then is nothing to prove. So, we can suppose that $x_0$ occurs in $F$. We first consider the case where $F$ is irreducible. Note that it is sufficient to consider the case where $n=0$. 
Indeed, if $R = \corpo[x_{1},\ldots,x_{n}]$ and $K$ is their fraction field then by the Gauss Lemma (see \cite[Theorem 2.1]{MR1878556}) we know that $F \in R[x_{0}]$ is irreducible if and only if $F = a_{0}+a_{1}x_{0}+\cdots+a_{d}x_{0}^{d} \in K[x_{0}]$ is irreducible and has content $1$, that is, the greatest common divisor of the coefficients is equal to $1$. Since we are assuming that $F$ is irreducible we have, in particular, that $F$ is irreducible over $K[x_{0}]$ where $K = \corpo(x_{1},\ldots,x_{n})$. Let $g(x_{0})= F(x_{0}^{l}) \in K[x_{0}]$. Note that the content of $g$ is equal to $1$, since the coefficients of $g$ are equal to the coefficients of $F$. In the other hand, the irreducibility of $F$ implies that $g$ is reduced: indeed, to see this note that it is sufficient to proof that $\mdc(g(x_{0}),\frac{dg}{dx_{0}}) = 1$. Now, by taking derivatives we have
$
\frac{dg}{dx_{0}} = lF^{'}(x_{0}^{l})x_{0}^{l-1}\neq 0.
$
Since $F$ is irreducible we ensure that $F^{'}(x_{0}^{l})$ and $F(x_{0}^{l})$ are coprime. In particular, we have $\mdc(g(x_{0}),g^{'}(x_{0})) = 1$. So,  $g(x_{0})$ is reduced in $R[x_{0}] = \corpo[x_{0},x_{1},\ldots,x_{n}]$.

Now we will consider the general case, where $F$ is reduced. Let $R = \corpo[x_1,\ldots,x_n]$ and $K$ their fraction field. Note that, without loss of generality, we can assume that 
$F\in R[x_0]$ has content equal to $1$. Let $G$ and $H$ two irreducible factors of $F$. We will show that $\tilde{G} = G(x_{0}^{l},x_{1}\ldots,x_{n})$ and $\tilde{H} = H(x_{0}^{l},x_{1}\ldots,x_{n})$ do not have common irreducible factors. By the Gauss Lemma (see \cite[Theorem 2.1]{MR1878556}) is it sufficient to show that $\tilde{G}$ and $\tilde{H}$
do not have common irreducible factors over 
$K[x_{0}]$, where $K = \corpo(x_{1},\ldots,x_{n})$. In the other hand, since  $G$ and $H$ has not common factors over $\corpo[x_{0},\ldots,x_{n}]$ we have, in particular, 
that there is no common irreducible factors over $K[x_{0}]$. So, there exists $A(x_{0}), B(x_{0}) \in K[x_{0}]$ such that
$
A(x_{0})G+B(x_{0})H = 1.
$
Specializing the identity above to $x_{0}^{l}$ we conclude that
$
A(x_{0}^{l})\tilde{G}+B(x_{0}^{l})\tilde{H} = 1
$
and so $\tilde{G}$,$\tilde{H}$ have no common irreducible factors over $K[x_{0}]$. So, we consider only the case where the polynomial $F$ is irreducible by considering their decomposition in irreducible factors.
\end{proof}

\begin{OBS} Let $l\in \mathbb{Z}_{>0}$ with $\mdc(l,p) =1$. In general, we do not ensure that if  $f \in \corpo[x]$ is irreducible then $g(x) = f(x^{l})$ is irreducible. Indeed, let $p>2$ and consider $f(x) = x-1$. If $l \in 2\mathbb{Z}$ then we have
$
g(x) =f(x^{l}) = (x^{\frac{l}{2}}-1)(x^{\frac{l}{2}}+1).
$
\end{OBS}

In the following, we present the proof of the Theorem \ref{casoplano}. Recall the statement.

\begin{thmA} Let $\corpo$ be an algebraically closed field of characteristic $p>0$. A generic foliation in $\Projdois$ of degree $d\geq 1$ with $p\nmid d-1$ has reduced $p$-divisor. More precisely, if $N$ is a fixed bundle then \footnote{Section \ref{notacoes} for notations}$U(\mathbb{P}_{\corpo}^{2},\mathcal{O}_{\mathbb{P}_{\corpo}^{2}}(1),N) \neq \emptyset$, if $\deg(N) = 3$ or if $\deg(N)>3$ with $p\nmid \deg(N)-3$.
\end{thmA}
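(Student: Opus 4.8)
The plan is to reduce everything to a non-emptiness statement and then to an explicit construction. By Lemma~\ref{aberto} the locus $U(\mathbb{P}_{\corpo}^{2},\mathcal{O}_{\mathbb{P}_{\corpo}^{2}}(1),N)$ is open in the space $\mathbb{F}ol_{N}(\mathbb{P}_{\corpo}^{2})$ of foliations with normal bundle $N$; hence it suffices to exhibit, for each admissible $N$, a single foliation of that type whose $p$-divisor is reduced. I would treat the two ranges of $\deg N$ separately.

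For $\deg N = 3$, i.e. $d=1$, I would take the diagonal foliation given by $\Omega = \lambda_{1}yz\,dx+\lambda_{2}xz\,dy+\lambda_{3}xy\,dz$ with $\lambda_{1}+\lambda_{2}+\lambda_{3}=0$. Its associated vector field $v_{\Omega}$ is diagonal, so $v_{\Omega}^{p}$ is obtained by raising the eigenvalues to the $p$-th power and $i_{v_{\Omega}^{p}}\Omega = c\cdot xyz$ for an explicit constant $c$. Choosing the $\lambda_{i}$ so that the coordinate singularities are $p$-reduced, Lemma~\ref{intlocal} guarantees that the foliation is not $p$-closed, which forces $c\neq 0$, so $\Delta_{\mathcal{F}} = \{xyz = 0\}$ is reduced.

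For $\deg N > 3$, i.e. $d\geq 2$ with $p\nmid d-1$, the idea is to choose a foliation whose $p$-curvature is computable in closed form and whose $p$-divisor is built out of $(d-1)$-st powers, so that Lemma~\ref{pullback} applies. A good model is the separable field $v = P(x)\partial_{x}+S(y)\partial_{y}$ on $D_{+}(z)$ with $P(x)=x^{d}-x$ and $S(y)=y^{d}-\mu y$ for generic $\mu\in\corpo^{*}$; homogenizing the dual form $\omega = S\,dx - P\,dy$ shows that $v$ defines a foliation of degree exactly $d$. Since $\partial_{x}^{p}=0$ on $\corpo[x]$, Lemma~\ref{Basicao} gives $v^{p}(x) = -P\,h_{P}$ with $h_{P}=\partial_{x}^{p-1}(P^{p-1})$, and symmetrically $v^{p}(y) = -S\,h_{S}$, so that $i_{v^{p}}\omega = -P(x)S(y)\bigl(h_{P}(x)-h_{S}(y)\bigr)$. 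Comparing the degree of this affine polynomial with $\deg\Delta_{\mathcal{F}} = p(d-1)+d+2$ shows that the line at infinity occurs in $\Delta_{\mathcal{F}}$ with multiplicity one.

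It then remains to prove that $P(x)S(y)(h_{P}(x)-h_{S}(y))$ is reduced. The factors $P=x(x^{d-1}-1)$ and $S=y(y^{d-1}-1)$ are square-free exactly when $\mdc(d-1,p)=1$, which is Lemma~\ref{pullback} applied to the reduced polynomial $w-1$ under $w\mapsto x^{d-1}$ (respectively $y^{d-1}$). The step I expect to be the real obstacle is the square-freeness of the high-degree factor $h_{P}(x)-h_{S}(y)$, of degree $(d-1)(p-1)$, together with its coprimality to $P$ and $S$: this factor encodes the whole $p$-curvature and has no a priori reason to be reduced. I would attack it by studying $\mdc(h_{P},h_{P}')$ and the locus $\{h_{P}'(x)=h_{S}'(y)=0\}\cap\{h_{P}(x)=h_{S}(y)\}$, using once more that $p\nmid d-1$ and a generic choice of $\mu$ to separate the branches and to avoid accidental common components with $P$ and $S$. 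Granting this, the affine part of $\Delta_{\mathcal{F}}$ is reduced and the line at infinity is simple, so $\Delta_{\mathcal{F}}$ is reduced and $U(\mathbb{P}_{\corpo}^{2},\mathcal{O}_{\mathbb{P}_{\corpo}^{2}}(1),N)\neq\emptyset$, which is the assertion of the theorem.
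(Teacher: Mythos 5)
Your reduction to non-emptiness via Lemma~\ref{aberto} and your $d=1$ example are correct and coincide with the paper's. The case $d\geq 2$, however, contains a step that is not just unproven but provably false: the claim that degree comparison forces the line at infinity to occur in $\Delta_{\mathcal{F}}$ with multiplicity one. Your computation $i_{v^{p}}\omega = -P(x)S(y)\bigl(h_{P}(x)-h_{S}(y)\bigr)$ is right (the two summands of $v$ commute, so $v^p=(P\partial_x)^p+(S\partial_y)^p$ and Lemma~\ref{Basicao} applies to each), and indeed $\od_{\ell_\infty}(\Delta_{\mathcal{F}})$ equals $p(d-1)+d+2$ minus the \emph{actual} degree of this affine polynomial, so multiplicity one requires $\deg(h_{P}-h_{S})=(d-1)(p-1)$ exactly. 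But the leading coefficient of $h_{P}=\partial_{x}^{p-1}(P^{p-1})$ is $(p-1)!\binom{d(p-1)}{p-1}\equiv-\binom{d(p-1)}{p-1}\pmod p$, and by Lucas' theorem $\binom{d(p-1)}{p-1}\not\equiv 0\pmod p$ if and only if $p\mid d-1$: writing $(d-1)(p-1)\equiv 1-d\pmod p$, adding $p-1$ to it produces a carry in base $p$ exactly when $d\not\equiv 1\pmod p$. So under the theorem's standing hypothesis $p\nmid d-1$ — the very hypothesis you invoke — the top coefficients of $h_P$ and $h_S$ both vanish, $\deg(h_P-h_S)<(d-1)(p-1)$, and $\od_{\ell_\infty}(\Delta_{\mathcal{F}})\geq 2$ for every choice of $\mu$. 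Concretely, for $p=3$, $d=2$, $P=x^{2}-x$, $S=y^{2}-\mu y$ one finds $h_{P}=2$ and $h_{S}=2\mu^{2}$ are constants, so $i_{v^{p}}\omega=-2(1-\mu^{2})PS$ has degree $4$ while $\deg\Delta_{\mathcal{F}}=7$, giving $\Delta_{\mathcal{F}}=\{x=0\}+\{x=1\}+\{y=0\}+\{y=\mu\}+3\ell_{\infty}$: not reduced. This also shows that your admitted remaining gap (square-freeness of $h_P-h_S$) is the lesser problem — that factor can collapse to a constant — and genericity of $\mu$ cannot help, since the vanishing of the leading coefficients involves only the monic top terms of $P$ and $S$. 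The separable product fields are simply the wrong family here.

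For contrast, the paper takes an essentially opposite route for $d\geq 2$: it first settles $d=2$ with a generic perturbation $ydx-xdy+\omega_{2}$ of the radial foliation, proving via a blow-up at the radial point and intersection theory on $Bl_{Q}(\Projdois)$ that the $p$-divisor is a sum of four distinct invariant lines and an \emph{irreducible} invariant curve of degree $p$; reducedness then follows by degree count. The general case $d=e+1$ with $\mdc(e,p)=1$ is obtained by pulling back this degree-$2$ example under $[x:y:z]\mapsto[x^{e}:y^{e}:z^{e}]$, and this is where Lemma~\ref{pullback} genuinely earns its keep — applied to the irreducible degree-$p$ curve and the fourth line, not, as in your sketch, to factors like $x^{d-1}-1$. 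If you wish to salvage a direct construction, you must build a model in which the behaviour along the curve at infinity is controlled a priori (e.g.\ by making all its intersections with the other separatrices $p$-reduced, in the spirit of Lemma~\ref{intlocal} and \cite[Fact 2.8]{MR3687427}), rather than hoping to read the multiplicity off a naive degree count.
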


We will consider first the case where $d\in \{1,2\}$ and after that we will use the case $d = 2$ to show the general case. We will divide the proof in propositions.

\begin{prop} A generic foliation in $\Projdois$ of degree $d\in \{1,2\}$ has reduced $p$-divisor. More precisely, we have $U(\mathbb{P}_{\corpo}^{2},\mathcal{O}_{\mathbb{P}_{\corpo}^{2}}(1),N) \neq \emptyset$, if $\deg(N) =\{3,4\}$.
\end{prop}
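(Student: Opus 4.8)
The plan is to leverage the openness from Lemma \ref{aberto}: the set $U(\Projdois,\mathcal{O}_{\Projdois}(1),N)$ is open in the irreducible space $\mathbb{F}ol_{N}(\Projdois)$, so to prove it is nonempty it suffices to exhibit, for $\deg(N)=3$ and for $\deg(N)=4$, a single foliation with reduced $p$-divisor. The computational engine will be the homogeneous description: if $\mathcal{F}$ is given by the dual homogeneous field $v=v_{\omega}=L\partial_{x}+M\partial_{y}+N\partial_{z}$ of degree $d$ with zero divergence, then writing $\omega=i_{R}i_{v}(dx\wedge dy\wedge dz)$ the triple contraction of the volume form yields, up to a nonzero scalar,
\[
\Delta_{\mathcal{F}}=\big[\,i_{v^{p}}\omega\,\big]=\left[\,\det\begin{pmatrix} L_{p} & M_{p} & N_{p} \\ x & y & z \\ L & M & N \end{pmatrix}\,\right]\in\Div(\Projdois),\qquad v^{p}=L_{p}\partial_{x}+M_{p}\partial_{y}+N_{p}\partial_{z}.
\]
By Lemma \ref{Basicao} the iterate $v^{p}$ is again a homogeneous field, with coefficients of degree $p(d-1)+1$, so $\Delta_{\mathcal{F}}$ has degree $p(d-1)+d+2$; reducedness of $\Delta_{\mathcal{F}}$ is exactly squarefreeness of this determinant.

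For $\deg(N)=3$, i.e. $d=1$, I would take the diagonal field $v=ax\partial_{x}+by\partial_{y}+cz\partial_{z}$ with $a+b+c=0$, chosen so that in the chart $\{z=1\}$ the singularity at the origin is $p$-reduced (eigenvalue ratio $\tfrac{b-c}{a-c}\notin\mathbb{F}_{p}$, possible since $\corpo$ is infinite). Then $v$ is not $p$-closed by Lemma \ref{intlocal}, and since $v$ is diagonal $v^{p}=a^{p}x\partial_{x}+b^{p}y\partial_{y}+c^{p}z\partial_{z}$, so the determinant factors as $xyz\cdot\det\begin{pmatrix} a^{p} & b^{p} & c^{p} \\ 1 & 1 & 1 \\ a & b & c \end{pmatrix}$ with nonzero scalar. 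Hence $\Delta_{\mathcal{F}}=\{xyz=0\}$ is a reduced triangle of three lines and $U(\Projdois,\mathcal{O}_{\Projdois}(1),\mathcal{O}_{\Projdois}(3))\neq\emptyset$.

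The case $\deg(N)=4$, i.e. $d=2$, is the heart of the matter and where I expect the genuine difficulty. The constraint is structural: by Proposition \ref{inv} every component of a \emph{reduced} $\Delta_{\mathcal{F}}$ is $\mathcal{F}$-invariant, so a reduced $p$-divisor of degree $p+4$ forces the example to carry invariant curves of total degree exactly $p+4$ — impossible for a degree-two foliation in characteristic zero, but available here. The worked example in the text exhibits the typical failure mode, where the line at infinity enters $\Delta_{\mathcal{F}}$ with multiplicity $p+1$, and any candidate must avoid it. I would therefore fix an explicit degree-two field with zero divergence — the homogeneous Jouanolou field $v=z^{2}\partial_{x}+x^{2}\partial_{y}+y^{2}\partial_{z}$ is the natural candidate, being non-$p$-closed and having no invariant lines — and compute $\det(v^{p},R,v)$ directly, organizing the iteration $v\mapsto v^{p}$ by the cyclic symmetry $x\mapsto y\mapsto z\mapsto x$.

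The main obstacle is then proving that this degree-$(p+4)$ determinant is squarefree uniformly in $p$. Concretely I would aim to show $\mdc(\Delta_{\mathcal{F}},\partial_{x}\Delta_{\mathcal{F}})=1$, which amounts to excluding a repeated invariant component and, by Proposition \ref{inv}, also excluding any non-invariant component (the latter would necessarily occur with multiplicity divisible by $p$). After restricting $\Delta_{\mathcal{F}}$ to each invariant curve in its support, this should reduce to checking that the $p$-curvature induced along that curve does not vanish identically — a finite list of non-vanishing conditions that I expect to hold for all but finitely many small primes, the exceptional primes being settled by direct computation.
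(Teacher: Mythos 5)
Your degree-$1$ construction is essentially the paper's (the same invariant triangle $\{xyz=0\}$; the paper writes it as the $1$-form $\Omega_{\alpha} = -yz\,dx+\alpha xz\,dy+(1-\alpha)xy\,dz$ with $\alpha\notin\mathbb{F}_{p}$), but your trace-zero normalization introduces a gap at $p=3$: the divergence-zero dual-field correspondence is only valid when $p\nmid d+2$, which for $d=1$ excludes $p=3$, and there your family genuinely degenerates. Indeed your scalar factor equals, up to sign, $(a-c)(b-c)\bigl((a-c)^{p-1}-(b-c)^{p-1}\bigr)$, and for $p=3$ with $a+b+c=0$ one has $(a-c)^{2}-(b-c)^{2}=(a-b)(a+b-2c)=(a-b)(a+b+c)=0$, so \emph{every} trace-zero diagonal field is $3$-closed. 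The fix is to work with the $1$-form directly, as the paper does; then the example is valid in every characteristic.

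The serious gap is the degree-$2$ case, which is the heart of the proposition and which your proposal does not prove. You fix the Jouanolou field and then state that you \emph{expect} the degree-$(p+4)$ determinant to be squarefree, with exceptional small primes ``settled by direct computation''. That cannot be completed as written: the proposition requires, for \emph{every} fixed characteristic $p$, an example with reduced $p$-divisor, and you neither prove the squarefreeness for large $p$ nor identify the exceptional set; moreover your sub-plan of restricting the $p$-curvature to ``each invariant curve in the support of $\Delta_{\mathcal{F}}$'' is circular for Jouanolou, since those components are exactly the unknown objects (over $\mathbb{C}$ there are no invariant curves, and mod $p$ their description is the whole difficulty — the paper only uses irreducibility of $\Delta_{\mathcal{F}_{p}}$ for Jouanolou as a \emph{hypothesis} in Proposition \ref{aplicaI}, checked at individual primes). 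The paper instead chooses an example engineered so that the $p$-divisor decomposes visibly: $\omega = y\,dx - x\,dy + a\,dx + b\,dy$ with $a,b$ generic quadrics has a radial singularity at $Q=[0:0:1]$ and exactly four invariant lines $l_{1},l_{2},l_{3},l_{\infty}$, each of multiplicity one in $\Delta_{\mathcal{F}}$ because genericity places $p$-reduced singularities at $l_{i}\cap l_{\infty}$ (via \cite[Fact 2.8]{MR3687427}); hence $\Delta_{\mathcal{F}} = l_{1}+l_{2}+l_{3}+l_{\infty}+C$ with $\deg C = p$, and irreducibility of $C$ is proved by blowing up $Q$: on $Bl_{Q}(\Projdois)$ one computes $\Delta_{\mathcal{H}}\equiv (p+4)F+2E$, extracts an invariant component $C\equiv E+pF$ with $C\cdot F=1$, and rules out residual invariant fibers using that the four lines are the only invariant lines, finally projecting to get an irreducible plane curve of degree $p$ with $m_{Q}(C)=p-1$. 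This argument is uniform in $p$, which is precisely what your determinant computation lacks; to repair your proposal you would need either such a geometric mechanism forcing the decomposition, or an actual proof of squarefreeness of $\det(v^{p},R,v)$ for the Jouanolou field valid for all $p$, neither of which is sketched.
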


\begin{proof} If $\deg(N) = 3$, that is, $d =1$ then there are examples of foliations with reduced $p$-divisor. Indeed, if $\alpha \in \corpo-\mathbb{F}_{p}$ consider the foliation 
$\mathcal{F}_{\alpha}$ in $\Projdois$ given by the $1$-form $\Omega_{\alpha} = -yzdx+\alpha xzdy+(1-\alpha)xydz$. The condition $\alpha \not\in \mathbb{F}_{p}$ implies that $\mathcal{F}_{\alpha}$ is not $p$-closed (see Lemma \ref{intlocal}) with $\Delta_{\mathcal{F}_{\alpha}} = \{x=0\}+\{y=0\}+\{z=0\}$. Now, consider the case where $\deg(N) = 4$ and let $D_{+}(z) = \{[x:y:z]\in \Projdois\mid z\neq 0\}$. We will get the example by compactification via the isomorphism
$$\begin{aligned}
          \Phi\colon D_{+}(z)& \longrightarrow   \mathbb{A}_{\corpo}^{2} &  \\
        [x:y:z]   & \mapsto \left(\frac{x}{z},\frac{y}{z}\right)
    \end{aligned}$$
    
\noindent of a foliation $\mathcal{G}$ in $\mathbb{A}_{\corpo}^{2}$ given by the $1$-form: $\omega = ydx-xdy+\omega_{2}$ where $\omega_{2} = a(x,y)dx+b(x,y)dy$ for $a,b \in \corpo[x,y]$ generic homogeneous polynomial of degree two. Note that $\mathcal{G}$ has tree $\mathcal{G}$-invariant lines which contains the point $(0,0)$. Indeed, the lines are given explicitly by the polynomial  $i_{R}\omega_{2} = l_{1}l_{2}l_{3}$, where $R$ is the radial vector field in $\mathbb{A}_{\corpo}^{2}$: $R = x\partial_{x}+y\partial_{y}$.
\noindent Since $a$ and $b$ are generic, we can assume that $l_{1}, l_{2}$ and $l_{3}$ has multiplicity $1$ along $\Delta_{\mathcal{G}}$. Indeed, we choose the lines $l_{1},l_{2},l_{3}$ such that for each 
$i$ we ensure that $l_{i}\cap l_{\infty} = \{P_{i}\}$  is a $p$-reduced  singularity of $\mathcal{G}$. In this case, by 
\cite[Fact 2.8]{MR3687427} we ensure that $l_{i}$ occurs with multiplicity $1$ in the $p$-divisor $\Delta_{\mathcal{G}}$. Let 
$\mathcal{F}$ be a foliation obtained via compactification of $\mathcal{G}$ in $\Projdois$, via $\Phi$. By Lemma \ref{intlocal} we know that $\mathcal{F}$ is not $p$-closed with four invariant lines, $l_{1},l_{2},l_{3}$ and $l_{\infty} = \{z= 0\}$, and so $\Delta_{\mathcal{F}} = l_{1}+l_{2}+l_{3}+l_{\infty}+C
$ for some curve $C$ of degree $p$. 

We will show that a generic choice of $a,b$ implies that  $C$ is irreducible and has $Q = [0:0:1]$ as a singularity of multiplicity $m_{Q}(C) = p-1$. For this, let $\pi: Bl_{Q}(\Projdois)\longrightarrow \Projdois$ the blowup at $Q$ fix $E$ the exceptional divisor and  
$F$ a fiber of the natural projection $\pi: Bl_{Q}(\Projdois)\longrightarrow \Projum$. Note that 
$\{F,E\}$ forms a base to the vector space $\Num_{\mathbb{Q}}(Bl_{Q}(\Projdois))$  and satisfies the following conditions $E^{2} = -1$, $E\cdot F = 1$ and $F^{2} = 0$. Let $\mathcal{H} = \pi^{*}\mathcal{F}$ the induced foliation. Since $Q$ is a radial singularity we have (see \cite[Chapter 2, Section 3]{MR3328860})
$N_{\mathcal{H}}^{*} = \pi^{*}N_{\mathcal{F}}^{*}+2E$ and $K_{\mathcal{H}} = \pi^{*}K_{\mathcal{F}}-E$. Let $H$ be a line containing the point $Q$. Since $\pi^{*}N_{\mathcal{F}}^{*} = \pi^{*}(-4H) \equiv -4\pi^{*}H \equiv -4(F+E)$ and $\pi^{*}K_{\mathcal{F}} \equiv  \pi^{*}H \equiv F+E$, we conclude 
\[
\Delta_{\mathcal{H}} \equiv pK_{\mathcal{H}}+N_{\mathcal{H}} \equiv 2E+\deg(\Delta_{\mathcal{F}})F = (p+4)F+2E.
\]
Let $\Tilde{l}_{1}$, $\Tilde{l}_{2},\Tilde{l}_{3}$ and $\Tilde{l}_{\infty}$ be the strict transform of $l_{1},l_{2}, l_{3}$ and $l_{\infty}$ respectively. Since $\pi\colon Bl_{Q}(\Projdois)-E \longrightarrow \mathbb{P}_{\corpo}^{2}-\{Q\}$ is an isomorphism we verify that $\Tilde{l}_{1}$, $\Tilde{l}_{2},\Tilde{l}_{3}$ and $\Tilde{l}_{\infty}$ occurs in $\Delta_{\mathcal{H}}$, that is $
\Delta_{\mathcal{H}}-\Tilde{l}_{1}-\Tilde{l}_{2}-\Tilde{l}_{3}-\Tilde{l}_{\infty} \geq 0$. Moreover, since $a,b$ are generic we can assume that $\od_{l_{i}}(\Delta_{\mathcal{H}})\geq 0$ for $i\in\{1,2,3,\infty\}$.
As $
(\Delta_{\mathcal{H}}-\Tilde{l}_{1}-\Tilde{l}_{2}-\Tilde{l}_{3}-\Tilde{l}_{\infty})\cdot F = 1$
there exist an irreducible curve $C \subset Bl_{Q}(\mathbb{P}^{2})$ that is $\mathcal{H}$-invariant and such that $C\cdot F = 1$. In particular, we have $C \equiv E+\alpha F$. Note that $\alpha>0$, since $Q$ is a radial singularity and, in particular, $E$ is not $\mathcal{H}$-invariant. Writing $
\Delta_{\mathcal{H}}-\Tilde{l}_{1}-\Tilde{l}_{2}-\Tilde{l}_{3}-\Tilde{l}_{\infty} = C+R$ for some divisor $R$ we conclude (using $C\cdot F =1$) that $R \equiv bF$, for some $b \in \mathbb{Z}_{\geq 0}$. Now, since $\omega_{2}$ is generic we may assume $b = 0$. Indeed, if $b\neq 0$, we see that
there exist a fiber of the natural projection $p\colon Bl_{Q}(\Projdois)\longrightarrow \Projum$ that is  $\mathcal{H}$-invariant (see Proposition \ref{inv}). By projecting this fiber in  $\Projdois$, we obtain a line which contain $Q$ that is $\mathcal{F}$-invariant. Since $l_{1}$,$l_{2}$,$l_{3}$ and $l_{\infty}$ occurs in $\Delta_{\mathcal{F}}$ with multiplicity $1$ and since they are the unique invariant lines of $\mathcal{F}$, we obtain a contraction. So, we conclude that $R = 0$ and $C$ is an irreducible  curve that is $\mathcal{H}$-invariant with $C\equiv E+pF$. Projecting $C$ via the map $\pi$ we obtain in $\Projdois$ an irreducible algebraic curve of degree $p$ that has $Q$ as a singularity and with multiplicity $p-1$. indeed, $\pi_{*}C$ is irreducible and the degree is given by
$$\deg(\pi_{*}(C)) = \pi_{*}C\cdot H = \pi^{*}\pi_{*}C\cdot \pi^{*}H = (E+pF)\cdot(E+F) = -1+p+1 = p.$$ 
In other hand, the multiplicity can be computed in the following way:
$$
\pi^{*}\pi_{*}C = \Tilde{C}+m_{Q}(C)E \equiv (E+pF)+m_{Q}(C)E \Longrightarrow  0 = \pi^{*}\pi_{*}C\cdot E = (E+pF)\cdot E-m_{Q}(C)
$$
and so $
0 = -1+p-m_{Q}(C)$ which implies $m_{Q}(C) = p-1$. This concludes the proof for $d \in \{1,2\}$.
\end{proof}

\begin{prop} A generic foliation in $\Projdois$ with normal bundle $N$ and of degree $d\geq 3$  with $p\nmid d-1$ has reduced $p$-divisor. More precisely, $U(\mathbb{P}_{\corpo}^{2},\mathcal{O}_{\mathbb{P}_{\corpo}^{2}}(1),N) \neq \emptyset$, if $\deg(N)\geq 5$ with $p\nmid \deg(N)-3$.
\end{prop}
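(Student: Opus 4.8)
The plan is to bootstrap from the degree-$2$ case by means of a Kummer cover. By Lemma \ref{aberto} the set $U(\Projdois,\mathcal{O}_{\Projdois}(1),N)$ is open, so it suffices to exhibit a single foliation of degree $d$ with reduced $p$-divisor; that is, to prove $U(\Projdois,\mathcal{O}_{\Projdois}(1),N)\neq\emptyset$ whenever $\deg(N)=d+2\geq 5$ and $p\nmid d-1$. I set $l:=d-1$, so that the hypothesis $p\nmid d-1$ is exactly $\mdc(l,p)=1$ and $l\geq 2$, and I consider the finite separable morphism
\[
\varphi_l\colon \Projdois\longrightarrow\Projdois,\qquad [x:y:z]\longmapsto[x^{l}:y^{l}:z^{l}],
\]
which is étale over the complement of the coordinate triangle $\{xyz=0\}$ and totally ramified (with index $l$) along each coordinate line.

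First I would place the degree-$2$ example of the previous proposition into a position adapted to $\varphi_l$. That example has four invariant lines — the three concurrent lines $l_1,l_2,l_3$ through the origin together with the line at infinity $l_\infty$ — and an irreducible invariant curve $C$ of degree $p$, and for generic coefficients its $p$-divisor $l_1+l_2+l_3+l_\infty+C$ is reduced. Choosing three non-concurrent members, say $l_1,l_2,l_\infty$, and applying a linear automorphism of $\Projdois$ carrying them to $\{x=0\},\{y=0\},\{z=0\}$, I obtain a degree-$2$ foliation $\mathcal{F}_2$ which is not $p$-closed, whose three coordinate lines are invariant, and whose $p$-divisor is
\[
\Delta_{\mathcal{F}_2}=\{x=0\}+\{y=0\}+\{z=0\}+\Gamma,
\]
where $\Gamma$ (the images of $l_3$ and $C$) is reduced of degree $p+1$, is $\mathcal{F}_2$-invariant, and has no coordinate line as a component; thus its defining polynomial $\gamma$ is reduced with $x,y,z\nmid\gamma$. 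Invariance of the three coordinate lines forces $\mathcal{F}_2$ to be given by a form $\Omega=yz\,\tilde{A}\,dx+xz\,\tilde{B}\,dy+xy\,\tilde{C}\,dz$ with $\tilde{A},\tilde{B},\tilde{C}$ linear and $\tilde{A}+\tilde{B}+\tilde{C}=0$, and for generic coefficients $x\nmid\tilde{A}$, $y\nmid\tilde{B}$, $z\nmid\tilde{C}$.

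Next I would analyse $\mathcal{G}:=\varphi_l^{*}\mathcal{F}_2$. Substituting $x\mapsto x^{l}$, $y\mapsto y^{l}$, $z\mapsto z^{l}$ in $\Omega$ and extracting the common factor yields
\[
\varphi_l^{*}\Omega=l\,x^{l-1}y^{l-1}z^{l-1}\bigl(yz\,\tilde{A}(x^{l},y^{l},z^{l})\,dx+xz\,\tilde{B}(x^{l},y^{l},z^{l})\,dy+xy\,\tilde{C}(x^{l},y^{l},z^{l})\,dz\bigr).
\]
Using $x\nmid\tilde{A}$, $y\nmid\tilde{B}$, $z\nmid\tilde{C}$ the bracketed form is saturated, so $\mathcal{G}$ has degree $l+1=d$ and normal bundle $\varphi_l^{*}N_{\mathcal{F}_2}\otimes\mathcal{O}_{\Projdois}(-3(l-1))=\mathcal{O}_{\Projdois}(4l)\otimes\mathcal{O}_{\Projdois}(-3(l-1))=\mathcal{O}_{\Projdois}(l+3)=N$. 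Since $\mathcal{F}_2$ is not $p$-closed and $\varphi_l$ is separable (here $\mdc(l,p)=1$ enters), $\mathcal{G}$ is not $p$-closed, and $\Delta_{\mathcal{G}}$ is defined.

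Finally I would show that $\Delta_{\mathcal{G}}$ is reduced. Over the étale locus $\{xyz\neq0\}$ the map $\varphi_l$ is a local isomorphism, so there $\Delta_{\mathcal{G}}$ agrees with $\varphi_l^{*}\Delta_{\mathcal{F}_2}=\varphi_l^{*}\Gamma=\{\gamma(x^{l},y^{l},z^{l})=0\}$; applying Lemma \ref{pullback} successively in the variables $x$, $y$, $z$ (each time $\mdc(l,p)=1$ and the relevant variable does not divide $\gamma$) shows this divisor is reduced. Hence $\Delta_{\mathcal{G}}$ can fail to be reduced only along the coordinate lines. Each coordinate line is $\mathcal{G}$-invariant — being ramification over an $\mathcal{F}_2$-invariant line — so by Proposition \ref{inv} it occurs in $\Delta_{\mathcal{G}}$ with multiplicity at least $1$. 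Now the part of $\Delta_{\mathcal{G}}$ off the coordinate lines is $\varphi_l^{*}\Gamma$, of degree $l(p+1)$, whereas $\deg\Delta_{\mathcal{G}}=p(d-1)+d+2=l(p+1)+3$; thus the coordinate part has degree exactly $3$, forcing each of the three lines to appear with multiplicity exactly $1$. Therefore $\Delta_{\mathcal{G}}$ is reduced and $\mathcal{G}\in U(\Projdois,\mathcal{O}_{\Projdois}(1),N)$, as required. The step I expect to be most delicate is the degree bookkeeping of the third paragraph: verifying that the bracketed $1$-form is genuinely saturated — equivalently that $\deg\mathcal{G}=l+1$ with no spurious common factor — which rests on the genericity conditions $x\nmid\tilde{A}$, $y\nmid\tilde{B}$, $z\nmid\tilde{C}$, together with the verification that the degree-$2$ example can indeed be normalized to be triangle-invariant with $\Gamma$ reduced and prime to $xyz$.
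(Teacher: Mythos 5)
Your proposal is correct and takes essentially the same route as the paper's own proof: both normalize the degree-$2$ example so that the three non-concurrent invariant lines become the coordinate triangle, pull back under the power map $[x:y:z]\mapsto[x^{d-1}:y^{d-1}:z^{d-1}]$ with $\mdc(d-1,p)=1$, invoke Lemma \ref{pullback} to see that the pulled-back invariant curves are reduced with distinct components, and close the argument by comparing against $\deg(\Delta)=p(d-1)+d+2$. The only cosmetic difference is that you identify the off-triangle part of $\Delta_{\mathcal{G}}$ via étale compatibility of the $p$-curvature, whereas the paper obtains the same equality from invariance of the five curves (giving $\Delta_{\mathcal{H}}\geq$ their sum) together with the identical degree count; your explicit checks of saturation of the pulled-back form and of the normal bundle degree are details the paper leaves implicit.
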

\begin{proof} We use the case  $d = 2$ to show the general case. Let $e \in \mathbb{Z}_{>0}$ such that $\mdc(e,p) = 1$ and $\mathcal{F}$ be a foliation in $\Projdois$ of degree $2$ that has reduced $p$-divisor. By the precedent proposition, we can assume that there exists a foliations that leaves invariant an irreducible algebraic curve $C$ of degree $p$ and four lines: $l_1,l_2,l_3$ and $l_{\infty}$ where $l_{1}\cap l_{2} = l_{0}\cap l_{1} = l_{0}\cap l_{2} = \{Q\}$. For simplicity, we suppose that  $l_1 = \{x=0\}$, $l_2 = \{y=0\}$ and $l_3 = \{ux+vy=0\}$ for some constants $u,v \in \corpo^{*}$. In this case, $\mathcal{F}$ is defined by a $1$-form of the type
$
\Omega_{1} = yzA_0dx+xzA_1dy+xyA_2dz 
$
for certain $A_0,A_1,A_2 \in \corpo[x,y,z]$ homogeneous of degree $1$ and such that $A_0+A_1+A_2 = 0$. Consider the finite morphism
$$\begin{aligned}
         \Phi_e\colon \Projdois & \longrightarrow   \Projdois  &  \\
        [x_{0}:x_{1}:x_{2}]  & \mapsto [x_{0}^{e}:x_{1}^{e}:x_{2}^{e}].
    \end{aligned}$$
Let $\mathcal{H}$ the foliation in $\Projdois$ defined by the saturation of the $1$-form
$$
\Omega = \Phi_{e}^{*}\Omega_{1} =  e(xyz)^{e-1}[yzA_0(x^{e},y^{e},z^{e})dx+xzA_1(x^{e},y^{e},z^{e})dy+xyA_2(x^{e},y^{e},z^{e})dz].
$$
Observe that $\mathcal{H}$ is a foliation of degree $e+1$ which is not $p$-closed. Since $\mdc(e,p)=1$, we can use the Lemma \ref{pullback} 
to ensure that $\Phi_{e}^{*}l_3$ and $\Phi_{e}^{*}C$ are reduced curves with irreducible distinct components. In particular, by the Proposition \ref{inv} it follows that $\Phi_{e}^{*}l_3$ and $\Phi_{e}^{*}C$ are $\mathcal{H}$-invariant curves. We claim that 
 $\mathcal{H}$ has $p$-divisor
$
\Delta_{\mathcal{H}} = \{x=0\}+\{y=0\}+\{z=0\}+\Phi_{e}^{*}l_3+\Phi_{e}^{*}C.
$
Indeed, note that $\Delta_{\mathcal{H}}$ is a divisor with degree given by the formula 
$$\deg(\Delta_{\mathcal{H}}) = pK_{\mathcal{H}}\cdot \mathcal{O}_{\mathbb{P}_{\corpo}^{2}}(1) +N_{\mathcal{H}}\cdot \mathcal{O}_{\mathbb{P}_{\corpo}^{2}}(1) = pe+e+3.$$ 
In the other hand, by construction we know that the curves $\{x=0\}$,$\{y=0\}$,$\{z=0\}$, $\Phi_{e}^{*}l_3$ and $\Phi_{e}^{*}C$ are $\mathcal{H}$-invariant. In particular,
$
\Delta_{\mathcal{H}} \geq \{x=0\}+\{y=0\}+\{z=0\}+\Phi_{e}^{*}l_3+\Phi_{e}^{*}C .
$
By comparison of the degrees we conclude the equality
$\Delta_{\mathcal{H}}= \{x=0\}+\{y=0\}+\{z=0\}+\Phi_{e}^{*}l_3+\Phi_{e}^{*}C.$
So, it follows that $\mathcal{H}$ is a foliation of degree $e+1$ in $\Projdois$ with reduced $p$-divisor. This finishes the proof of the proposition.
\end{proof}

\begin{OBS} \label{proj} In the  proof of the Theorem \ref{casoplano} we saw that is possible to find a foliation $\mathcal{F}$ of degree $2$ in $\Projdois$ that is not $p$-closed with $p$-divisor in the form
$
\Delta_{\mathcal{F}} = l_{1}+l_{2}+l_{3}+l_{4}+C
$
where $C$ is an irreducible algebraic curve of degree $p$ and $l_{i}\neq l_{j}$ if $i\neq j$. Let $P,Q \in \Projdois$ and fix $\Phi:\Projdois\longrightarrow \Projdois$ an automorphism
such that $\Phi(P), \Phi(Q) \ \not\in C$. Then, $\Phi^{*}\mathcal{F}$ is foliation that is not $p$-closed with $p$-divisor 
$
\Delta_{\Phi^{*}\mathcal{F}} = \Phi^{*}l_{1}+\Phi^{*}l_{2}+\Phi^{*}l_{3}+\Phi^{*}l_{4}+\Phi^{*}C
$
where $\Phi^{*}C$ is an irreducible algebraic curve of degree $p$ such that $P,Q \not\in \Phi^{*}C$.
\end{OBS}

\section{The \texorpdfstring{$p$}{p}-divisor for foliations on \texorpdfstring{$\Projumprod$}{P1P1}}\label{pdivisorP1P1}

In the following, we will fix $x_{0},x_{1}$ (resp. $y_{0},y_{1}$) as the coordinates functions of the first factor (resp. second factor) of $\Projumprod$. Let $\pi_{1}$ the projection of $\Projumprod$ over the first factor and $\pi_{2}$ the projection of  $\Projumprod$ over the second factor. Let $F$ and $M$ fibers of $\pi_{1}$ and $\pi_{2}$ respectively. Recall that $\{F,M\}$ is a base for the vector space $\Num_{\mathbb{Q}}(\Projumprod)$ which satisfies the following numerical conditions $F^{2} = 0$,  $F\cdot M =1$, and $M^{2} = 0.$ Observe that for any fibers $F_{0} = \pi_{1}^{-1}(\pt)$ and $M_{0} = \pi_{2}^{-1}(\pt)$ we have $F_{0}\equiv F$ and $M_{0} \equiv M$. 

\begin{lemma}\label{compactificacao} Let $C = \mathcal{Z}(f) \subset \mathbb{A}_{\corpo}^{2}$ be an irreducible algebraic curve of degree $d$ and $f = f_{k}+f_{k+1}+\cdots+f_{d}$ the decomposition of $f$
in homogeneous terms as element of $\corpo[x,y]$. Suppose that 
$x^{d}$ and $y^{d}$ effectively occurs in $f_{d}$ and
consider $\overline{C}$ the bi-projectivization of $C$ in $\Projumprod$ via the isomorphism:
$$\begin{aligned}
          \Phi\colon U_{11} & \longrightarrow \mathbb{A}_{\corpo}^{2} &  \\
        ([x_{0}:x_{1}],[y_{0}:y_{1}])  & \mapsto \left(\frac{x_{0}}{x_{1}},\frac{y_{0}}{y_{1}}\right).
    \end{aligned}$$
Then, $\overline{C}$ is irreducible and if $\{F,M\}$ is a base for $\Num_{\mathbb{Q}}(\Projumprod)$, with $F^{2} = 0$, $F\cdot M = 1$ and $M^{2} = 0$ then
$
\overline{C} \equiv \deg(C)(F+M).
$
\end{lemma}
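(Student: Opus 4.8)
The plan is to treat the two assertions of the lemma separately: the irreducibility of $\overline{C}$ is a purely topological fact, whereas its numerical class is read off from the intersection numbers of $\overline{C}$ with the two rulings $F$ and $M$.

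First I would dispose of irreducibility. Since $\Phi\colon U_{11}\to\mathbb{A}_{\corpo}^{2}$ is an isomorphism and $C$ is irreducible, the preimage $\Phi^{-1}(C)\subset U_{11}$ is irreducible, and by definition $\overline{C}$ is its Zariski closure in $\Projumprod$. As the closure of an irreducible set is irreducible, $\overline{C}$ is an irreducible curve; being the closure of the reduced curve $C$, it is reduced, so we may treat it as a prime divisor. Because $d=\deg(C)\ge 1$ and both $x^{d}$ and $y^{d}$ occur, the curve $\overline{C}$ is not contained in any fiber and hence dominates both factors, so it meets every fiber of $\pi_{1}$ and of $\pi_{2}$ properly.

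For the numerical class, write $\overline{C}\equiv aF+bM$ in $\Num_{\mathbb{Q}}(\Projumprod)$. The relations $F^{2}=0$, $F\cdot M=1$, $M^{2}=0$ give $\overline{C}\cdot F=b$ and $\overline{C}\cdot M=a$, so it suffices to compute these two numbers, which I would do by intersecting $\overline{C}$ with a generic fiber of each ruling (legitimate since every fiber is numerically equivalent to $F$, resp. $M$, and the intersections are proper). Take a generic horizontal fiber $M_{0}=\{y=c\}\equiv M$; in the chart $U_{11}$ its intersection with $\overline{C}$ is the zero-dimensional subscheme $\{f(x,c)=0\}$ of the affine line. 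Here the hypothesis enters decisively: since $x^{d}$ occurs in the top form $f_{d}$ while every monomial of $f$ has total degree at most $d$, the coefficient of $x^{d}$ in $f$ (as a polynomial in $x$) is a nonzero constant, so $f(x,c)$ has $x$-degree exactly $d$ for every $c$; consequently all $d$ roots, counted with multiplicity, lie in the affine line and none escapes to the point $x=\infty$ on $\{x_{1}=0\}$. Thus $\overline{C}\cdot M=\deg_{x}f=d$, so $a=d$. By the symmetric argument, the occurrence of $y^{d}$ in $f_{d}$ forces the coefficient of $y^{d}$ to be a nonzero constant, so a generic vertical fiber $F_{0}=\{x=c_{0}\}\equiv F$ meets $\overline{C}$ in $\deg_{y}f(c_{0},y)=d$ affine points, whence $\overline{C}\cdot F=d$ and $b=d$. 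Combining these gives $\overline{C}\equiv d(F+M)$.

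I expect the only delicate step to be the control of the intersection at infinity, namely the claim that the $d$ points of $\overline{C}\cap M_{0}$ all lie in $U_{11}$ and none slips onto the boundary lines $\{x_{1}=0\}$ or $\{y_{1}=0\}$. This is exactly where the assumption that $x^{d}$ and $y^{d}$ occur in $f_{d}$ is used: it guarantees $\deg_{x}f=\deg_{y}f=d$ with constant leading coefficients, so that both intersection numbers equal $d$ rather than some strictly smaller value.
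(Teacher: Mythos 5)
Your proof is correct, but it takes a genuinely different route from the paper. The paper works with the defining equation: it writes down the explicit bi-homogenization
$F = (x_{1}y_{1})^{d-k}f_{k}(x_{0}y_{1},x_{1}y_{0})+\cdots+f_{d}(x_{0}y_{1},x_{1}y_{0})$,
observes it has bi-degree $(d,d)$ (which gives the class $d(F+M)$ at once), and uses the hypothesis on $x^{d},y^{d}$ to prove that this polynomial is irreducible: since $f_{d}(x_{0}y_{1},x_{1}y_{0})\notin\langle x_{1}y_{1}\rangle$, no factor of $F$ is a monomial in $x_{1},y_{1}$, so specializing $x_{1}=y_{1}=1$ would turn a nontrivial factorization of $F$ into one of $f$. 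You instead get irreducibility for free (closure of an irreducible set) and put all the weight of the hypothesis on the intersection-theoretic computation $\overline{C}\cdot F=\overline{C}\cdot M=d$, via the observation that the coefficient of $x^{d}$ (resp.\ $y^{d}$) in $f$ is forced to be a nonzero constant. The two viewpoints are complementary: the paper's argument also identifies the defining bi-homogeneous equation of $\overline{C}$ (in effect showing the naive bi-homogenization picks up no boundary components), while yours isolates cleanly where the hypothesis enters numerically, as the example $f=x+y+xy$ with class $(1,1)$ illustrates.

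One point to tighten: your key step, that no intersection with a generic fiber ``escapes to infinity,'' is phrased in metric language about roots, which is not available over an arbitrary algebraically closed field of characteristic $p$. The algebraic substitute is immediate, though: since $f$ is, up to a nonzero constant, monic of degree $d$ in $x$ over $\corpo[y]$, the ring $\corpo[x,y]/(f)$ is a finite $\corpo[y]$-module, so the second projection $C\to\mathbb{A}^{1}_{y}$ is finite, hence proper; therefore $\overline{C}\cap(\mathbb{P}^{1}\times\mathbb{A}^{1})=C$, i.e.\ $\overline{C}$ contains no point with $x_{1}=0$ and $y_{1}\neq 0$ (and symmetrically none with $y_{1}=0$, $x_{1}\neq 0$). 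Every fiber $\{y=c\}$ with $c$ finite then meets $\overline{C}$ only in affine points, and your count $\overline{C}\cdot M=\deg_{x}f(x,c)=d$ goes through exactly as you wrote it; likewise for $\overline{C}\cdot F$. With that substitution your argument is complete.
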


\begin{proof} The compactification $\overline{C}$ is defined by the following polynomial
$$
F = (x_{1}y_{1})^{d-k}f_{k}(x_{0}y_{1},x_{1}y_{0})+(x_{1}y_{1})^{d-k-1}f_{k+1}(x_{0}y_{1},x_{1}y_{0})+\cdots+f_{d}(x_{0}y_{1},x_{1}y_{0})
$$
It is easy to see that $F$ is a homogeneous polynomial of  bi-degree $(d,d)$, so that we need only to check their irreducibility.  In the  other hand, note that if exists an irreducible factor $H$ of $F$ then necessarily we have $H \not\in \langle x_{1}y_{1}\rangle$ since the hypothesis in
$f_{d}$ implies  that $f_{d}(x_{0}y_{1},x_{1}y_{0})\not\in \langle x_{1}y_{1}\rangle$. But, specializing to $x_{1} = 1, y_{1} = 1$ we conclude that $f$ is reducible. Contradiction!  
\end{proof}

\begin{ex} Let $C$ be an affine curve in $\mathbb{A}_{\corpo}^{2}$ defined by the equation
$f = x+y+xy$. Using the map
$$\begin{aligned}
          \Phi\colon U_{11} & \longrightarrow  \mathbb{A}_{\corpo}^{2} &  \\
        ([x_{0}:x_{1}],[y_{0}:y_{1}])  & \mapsto \left(\frac{x_{0}}{x_{1}},\frac{y_{0}}{y_{1}}\right)
    \end{aligned}$$
and projectivizing $C$ in $\Projumprod$ via $\Phi$ we obtain the curve $\overline{C} = \mathcal{Z}(x_{0}y_{1}+x_{1}y_{0}+x_{0}y_{0})$ of bi-degree $(1,1)\neq (2,2)$. This shows that the condition in Lemma \ref{compactificacao} is necessary.
\end{ex}

\begin{thm} \label{casop1p1} Let $d_{1},d_{2}\in \mathbb{Z}_{\geq0}$ such that $p\nmid d_{i}$, if $d_{i}\neq 0$. Then, a generic foliation in $\Projumprod$ with normal bundle $N\equiv (d_{1}+2)F+(d_{2}+2)M$ has reduced $p$-divisor. More precisely, $U(\Projumprod,\mathcal{O}_{\Projumprod}(F+M),N)\neq\emptyset$ if
\begin{itemize}
    \item $N\cdot F-2\geq 0$ and $p\nmid N\cdot F-2$ (if nonzero);
    \item  $\deg(N)-N\cdot F+d-2 \geq 0$ and $p\nmid \deg(N)-N\cdot F-2$ (if nonzero).
\end{itemize}
\end{thm}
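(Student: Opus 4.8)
The plan is to use the openness statement of Lemma \ref{aberto}: to prove $U(\Projumprod,\mathcal{O}(F+M),N)\neq\emptyset$ it suffices to exhibit one foliation with normal bundle $N\equiv(d_1+2)F+(d_2+2)M$ and reduced $p$-divisor. I would first rewrite the hypotheses: with $H=F+M$, $F^2=M^2=0$ and $F\cdot M=1$ one has $N\cdot F=d_2+2$ and $\deg(N)=d_1+d_2+4$, so $N\cdot F-2=d_2$ and $\deg(N)-N\cdot F-2=d_1$; the conditions therefore say exactly that $p\nmid d_1$ (if $d_1\neq0$) and $p\nmid d_2$ (if $d_2\neq0$), which is precisely the input of Lemma \ref{pullback}.

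The construction mirrors the plane case, replacing $\Phi_e$ by the finite cover
$$\Psi_{e_1,e_2}\colon\Projumprod\longrightarrow\Projumprod,\qquad([x_0:x_1],[y_0:y_1])\mapsto([x_0^{e_1}:x_1^{e_1}],[y_0^{e_2}:y_1^{e_2}]).$$
I would build base foliations for the four minimal bidegrees $(\epsilon_1,\epsilon_2)\in\{0,1\}^2$, arranged so that $\Delta_{\mathcal F_0}$ is a sum of coordinate fibres among $\{x_0=0\},\{x_1=0\},\{y_0=0\},\{y_1=0\}$ together with a single irreducible curve $C$ divisible by none of the coordinate variables. For $(0,0)$ the logarithmic foliation of $x\partial_x+\alpha y\partial_y$, $\alpha\notin\mathbb{F}_p$, works: from $s_{\mathcal F}=(\alpha^p-\alpha)xy$ and the symmetric behaviour at infinity, $\Delta_{\mathcal F_0}=\{x_0=0\}+\{x_1=0\}+\{y_0=0\}+\{y_1=0\}$, reduced and not $p$-closed by Lemma \ref{intlocal}. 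For $(1,1)$, $(1,0)$ and $(0,1)$ I would compactify a generic affine foliation via Lemma \ref{compactificacao}, so that its invariant affine curve closes up to an irreducible $C$ of bidegree $(p+1,p+1)$, $(p+1,2)$, $(2,p+1)$ respectively, while the relevant boundary fibres are $p$-reduced singular and hence occur with multiplicity one (Lemmas \ref{intlocal} and \ref{explosao}); a degree count against $\deg(\Delta)=pK_{\mathcal F}\cdot H+N\cdot H$ fixes $\Delta_{\mathcal F_0}$ on the nose.

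Given the base foliation, I would put $e_i=d_i$ when $d_i\geq1$ and $e_i=1$ when $d_i=0$, and let $\mathcal H$ be the saturation of $\Psi^{*}\mathcal F_0$. Pulling back the defining $1$-form and removing the common factor $(x_0x_1)^{e_1-1}(y_0y_1)^{e_2-1}$ produced by the invariant ramification fibres, I would read off $N_{\mathcal H}\equiv(d_1+2)F+(d_2+2)M$, exactly as in the plane computation $\Phi_e^{*}\Omega_1=e(xyz)^{e-1}[\cdots]$. The coordinate fibres stay $\mathcal H$-invariant, and the transverse curve pulls back to $\Psi^{*}C$, reduced by Lemma \ref{pullback} applied one variable at a time (this is where $\gcd(e_i,p)=1$ enters). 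Every one of these components is $\mathcal H$-invariant, so by Proposition \ref{inv} we have $\Delta_{\mathcal H}\geq\Psi^{*}C+\sum(\text{coordinate fibres})$; since the right-hand side is reduced with pairwise distinct components and computing classes gives $\Psi^{*}C+\sum(\text{coordinate fibres})\equiv pK_{\mathcal H}+N_{\mathcal H}\equiv\Delta_{\mathcal H}$, the inequality is an equality and $\Delta_{\mathcal H}$ is reduced.

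I expect the real difficulty to be the base cases rather than the covering. Producing, for the bidegrees $(1,1)$, $(1,0)$, $(0,1)$, a foliation whose $p$-divisor is literally (coordinate fibres) $+$ (one irreducible curve off those fibres) demands a genericity argument guaranteeing simultaneously the irreducibility of $C$ and that the chosen boundary fibres are $p$-reduced of multiplicity one; this is the analogue of the blow-up analysis performed for $\Projdois$. A secondary but essential point is the normal-bundle bookkeeping under $\Psi$: one must keep the invariant ramification fibres (absorbed into the saturation, each surviving as a single reduced fibre) separate from the transverse curve (into which one merely substitutes powers), so that the two contributions to $N_{\mathcal H}$ are combined with the correct multiplicities.
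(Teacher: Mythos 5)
Your reduction of the numerical hypotheses to $p\nmid d_1$, $p\nmid d_2$ is correct, and your covering step --- pulling back a base example by $\Psi_{e_1,e_2}$ ramified only along invariant coordinate fibres, applying Lemma \ref{pullback} one variable at a time, and closing with the class count against $pK_{\mathcal{H}}+N_{\mathcal{H}}$ --- is essentially the paper's argument (the paper runs the two factors one at a time: a cover in $x$ for $(l,0)$ in Case A, and for general $(l_1,l_2)$ a cover of the $(1,1)$ example followed by symmetry in $y$ in Case B). The genuine gap is in the base cases, which you yourself flag as the difficulty but do not supply, and the structures you posit for them are wrong. For $K_{\mathcal{F}}\equiv F$ the tangency divisor of $\mathcal{F}$ with the fibration $\pi_1$ has class $K_{\mathcal{F}}+K_{\mathcal{G}}-K_X=F-2M+2F+2M=3F$, so every such foliation has three invariant $\pi_1$-fibres counted with multiplicity, generically three distinct ones; each occurs in $\Delta_{\mathcal{F}}$ by Proposition \ref{inv}, so the residual part of $\Delta_{\mathcal{F}}\equiv (p+3)F+2M$ has class at most $pF+2M$, and your configuration of two coordinate fibres plus an irreducible curve of bidegree $(p+1,2)$ cannot occur for a generic foliation (in the other bidegree convention the residual class would be purely vertical, which is worse). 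Moreover Lemma \ref{compactificacao} only produces curves in the balanced class $\deg(C)(F+M)$, so it cannot manufacture the $(p+1,2)$ or $(2,p+1)$ curves you propose for the $(1,0)$ and $(0,1)$ cases at all. The paper's $(1,0)$ base case is genuinely harder than a compactification: it takes a Riccati foliation with exactly three invariant $\pi_1$-fibres, exactly one invariant $\pi_2$-fibre and $p$-reduced corner singularities, and proves that the residual invariant curve is irreducible with $C\equiv pF+M$ by the intersection argument $D\cdot F=1$, $D=C+R$ with $R\equiv vF$, $u+v=p$, excluding $v\neq 0$ via the multiplicity-one property of the invariant fibres along $\Delta_{\mathcal{F}}$. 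This is exactly the ``genericity argument'' your proposal defers.

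For the $(1,1)$ base, your demand that the residual curve be a single irreducible curve of bidegree $(p+1,p+1)$ is both unconstructed and unnecessary: the covering step only consumes reducedness and coprimality to the coordinate variables. The paper instead transplants the degree-two plane foliation from the proof of Theorem \ref{casoplano} (four invariant lines plus an irreducible invariant curve of degree $p$), uses Remark \ref{proj} to move that curve away from the points $[0:1:0]$ and $[1:0:0]$, and then applies Lemma \ref{compactificacao} --- legitimately in this case, since the relevant curves have the diagonal leading terms the lemma requires --- to obtain a reduced residual divisor $\overline{L}+\overline{C}\equiv (F+M)+p(F+M)$, with the $p$-reduced corner singularity arranged by choosing the linear parts so that $a_0/b_0\notin\mathbb{F}_p$. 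So your skeleton matches the paper's, but the pillars carrying it --- the $(1,0)$, $(0,1)$ and $(1,1)$ examples with reduced $p$-divisor and invariant ramification locus --- are precisely where the proposal either asserts incorrect numerology or leans on an unproved genericity claim.
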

The proof of the Theorem \ref{casop1p1} is divided in propositions. We show that given $d_{1},d_{2}\in \mathbb{Z}_{\geq 0}$ such that $p\nmid d_{i}$ (if  $d_{i}\neq 0$) we can find a foliation $\mathcal{G}$ in $\Sigma_{0} = \Projumprod$ with the following properties:
\begin{enumerate}[(i)]
    \item \label{um}$K_{\mathcal{G}}\equiv d_{1}F_{0}+d_{2}M_{0}$ where $F_{0} = \{x_{0}= 0\}$ and $M_{0} = \{y_{0}=0\}$.
    \item \label{dois} $F_{0}$ and $M= \{y_{1}=0\}$ are $\mathcal{G}$-invariant curves with $\{Q\} = F_{0}\cap M$ a $p$-reduced singularity of $\mathcal{G}$.
    \item \label{tres} The $p$-divisor $\Delta_{\mathcal{G}}$ is reduced.
\end{enumerate}
We will consider two cases: A and B.

\subsection{Case A}

In this subsection, we proof, in particular, that a generic foliation  $\mathcal{G}$ in $\Projumprod$ with cotangent divisor $K_{\mathcal{G}} \equiv d_1F_0+d_2M_0$ has reduced $p$-divisor when 
$$(d_1,d_2) \in \{(0,0)\}\cup\{(l,0)\in \mathbb{Z}^2\mid l>0 \mbox{ and } p\nmid l \}\cup\{(0,l)\in \mathbb{Z}^2\mid l>0 \mbox{ and } p\nmid l \}.$$ We start with the case $d_1 = d_2 = 0$.
\begin{prop} A generic foliation in $\Projumprod$ with cotangent divisor trivial $K_{\mathcal{G}}\equiv 0$ satisfies (\ref{um}),(\ref{dois}) e (\ref{tres}).
\end{prop}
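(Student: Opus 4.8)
The plan is to prove the statement by exhibiting a single explicit foliation that satisfies (\ref{um}), (\ref{dois}) and (\ref{tres}) simultaneously, and then to upgrade the conclusion (\ref{tres}) to the generic statement via the openness established in Lemma \ref{aberto}. Fix $\alpha \in \corpo \setminus \mathbb{F}_{p}$ and let $\mathcal{G}$ be the foliation on $\Projumprod$ which, in the affine chart $U_{11}$ with coordinates $x = x_{0}/x_{1}$, $y = y_{0}/y_{1}$, is given by the vector field $v = x\partial_{x} + \alpha y\partial_{y}$ (equivalently by the $1$-form $\omega = \alpha y\,dx - x\,dy$); this is the $1$-parameter subgroup of the torus $\mathbb{G}_{m}^{2}$ of weight $(1,\alpha)$.

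First I would verify (\ref{um}) by checking that $v$ extends to a \emph{global} vector field on $\Projumprod$ whose zero locus is just the four torus-fixed points. Indeed $x\partial_{x}$ is the generator of the scaling action on the first $\Projum$ (in the chart $x' = 1/x$ it reads $-x'\partial_{x'}$, hence is regular at $x=\infty$), and similarly for $\alpha y\partial_{y}$ on the second factor; thus $v \in \h(\Projumprod, T_{\Projumprod})$ and its zeros are isolated. Consequently the inclusion $\mathcal{O}_{\Projumprod}\xrightarrow{\,v\,}T_{\Projumprod}$ is already saturated, so $T_{\mathcal{G}} = \mathcal{O}_{\Projumprod}$ and $K_{\mathcal{G}} \equiv 0$; since $K_{\Projumprod}\equiv -2F-2M$, the adjunction formula $K_{\Projumprod} = K_{\mathcal{G}} - N_{\mathcal{G}}$ gives $N_{\mathcal{G}}\equiv 2F+2M$. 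This is exactly (\ref{um}) with $d_{1}=d_{2}=0$.

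Next I would record the toric geometry needed for (\ref{dois}) and (\ref{tres}): the four boundary divisors $\{x_{0}=0\}$, $\{x_{1}=0\}$, $\{y_{0}=0\}$, $\{y_{1}=0\}$ are all $\mathcal{G}$-invariant, being orbit closures of the flow of $v$. In particular $F_{0}=\{x_{0}=0\}$ and $M=\{y_{1}=0\}$ are invariant and meet at the fixed point $Q$. In local coordinates $x$, $w=1/y$ around $Q$ one computes $v = x\partial_{x} - \alpha w\partial_{w}$, so $Q$ is non-degenerate with eigenvalue $\alpha(Q) = -\alpha\notin\mathbb{F}_{p}$; hence $Q$ is $p$-reduced, which is (\ref{dois}). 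By Lemma \ref{intlocal} the existence of a $p$-reduced singularity forces $\mathcal{G}$ to be not $p$-closed, so $\Delta_{\mathcal{G}}$ is well defined.

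Finally, for (\ref{tres}) the argument is a tight degree count. By Proposition \ref{inv} each of the four invariant boundary lines occurs in $\Delta_{\mathcal{G}}$ with positive order. On the other hand $\Delta_{\mathcal{G}}\equiv pK_{\mathcal{G}}+N_{\mathcal{G}}\equiv 2F+2M$, which has degree $4$ with respect to the polarization $H = F+M$, while each boundary line has $H$-degree $1$; the four lines therefore already account for the entire degree. Hence $\Delta_{\mathcal{G}} = \{x_{0}=0\}+\{x_{1}=0\}+\{y_{0}=0\}+\{y_{1}=0\}$ with every multiplicity equal to $1$, so it is reduced, giving (\ref{tres}). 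The one delicate point is precisely this tightness $4=4$: it is what simultaneously excludes extra components and forces multiplicity one, and it hinges on having $K_{\mathcal{G}}\equiv 0$ so that the $p$-divisor class equals $N_{\mathcal{G}}$. To pass from this example to the generic statement I would then invoke Lemma \ref{aberto}, which shows that reducedness of $\Delta_{\mathcal{F}}$ is an open condition on $\mathbb{F}ol_{N}(\Projumprod)$; our $\mathcal{G}$ witnesses $U(\Projumprod,\mathcal{O}_{\Projumprod}(F+M),N)\neq\emptyset$, so the generic foliation with $K_{\mathcal{G}}\equiv 0$ has reduced $p$-divisor.
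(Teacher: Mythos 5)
Your proof is correct and follows essentially the same route as the paper: both exhibit the linear (toric) foliation given by $x\partial_{x}+\alpha y\partial_{y}$ with $\alpha\notin\mathbb{F}_{p}$ on $U_{11}$, note that the four boundary curves $F_{0},F_{1},M_{0},M$ are invariant, force $\Delta_{\mathcal{G}}$ to equal their reduced sum by comparing with $\Delta_{\mathcal{G}}\equiv pK_{\mathcal{G}}+N_{\mathcal{G}}\equiv 2F+2M$, and check $p$-reducedness of $Q=F_{0}\cap M$ in a chart at infinity. Your additional touches (verifying the vector field is global with isolated zeros to get $K_{\mathcal{G}}\equiv 0$, and explicitly citing Lemma \ref{aberto} for the passage to genericity) are fine but only make explicit what the paper leaves implicit.
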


\begin{proof}  Let $\omega$  be a $1$-form in $\mathbb{A}_{\corpo}^{2}$ given by 
$
\omega= \alpha ydx+xdy \quad\mbox{with}\quad \alpha \not\in \mathbb{F}_{p}.
$
Consider $\mathbb{A}_{\corpo}^{2}$ as an open subset of  $\Projumprod$ via the  isomorphism
$$
\Phi\colon U_{11} \longrightarrow \mathbb{A}_{\corpo}^{2} \qquad ([x_{0},x_{1}],[y_{0},y_{1}]) \mapsto \left(\frac{x_{0}}{x_{1}},\frac{y_{0}}{y_{1}}\right)
$$
where $U_{11} = \{y_{1}\neq 0\}\cap\{x_{1}\neq 0\}\subset \Projumprod$. Projectivizing the $1$-form $\omega$ in $\Projumprod$ by the map $\Phi$ we obtain:
$$
\Phi^{*}\omega = \frac{\alpha y_{0}x_{1}dx_{0}-\alpha y_{0}x_{0}dx_{1}}{x_{1}^{2}y_{1}}+\frac{x_{0}y_{1}dy_{0}-y_{0}x_{0}dy_{1}}{x_{1}y_{1}^{2}}
$$
and considering their saturation it follows that
$
\Omega = \alpha x_{1}y_{0}y_{1}dx_{0}-\alpha x_{0}y_{0}y_{1}dx_{1}+x_{0}x_{1}y_{1}dy_{0}-x_{0}x_{1}y_{0}dy_{1}
$
a projective $1$-form that is bi-homogeneous which defines a foliation $\mathcal{F}_{\Omega}$ with $K_{\mathcal{F}_{\Omega}} \equiv 0$. In particular, satisfies (\ref{um}). Note that $F_0$, $F_{1} = \{x_{1} =  0\}$, $M = \{y_{1} = 0\}$ and $M_{0}$ are $\mathcal{F}_{\Omega}$-invariant curves, so that
$
\Delta_{\mathcal{F}_{\Omega}} \geq M_{0}+M+F_{1}+F_{2}.
$
In the other hand,
$
\Delta_{\mathcal{F}_{\Omega}} \equiv pK_{\mathcal{F}}+N_{\mathcal{F}} \equiv N_{\mathcal{F}} \equiv 2F_{0}+2M_{0}
$
and by degree comparison we conclude the equality 
$
\Delta_{\mathcal{F}_{\Omega}} = M_{0}+M+F_{1}+F_{2}.
$
In particular, $\mathcal{F}_{\Omega}$ satisfies (\ref{tres}). Now, let $U_{10}$ be the open set given by $\{y_{0}\neq 0\}\cap \{x_{1} \neq 0\}$.  Observe that there exists an isomorphism
$$\begin{aligned}
          \psi \colon U_{10} & \longrightarrow \mathbb{A}_{\corpo}^{2} &  \\
        \left([x_{0}:x_{1}],[y_{0}:y_{1}]\right)  & \mapsto \left(\frac{x_{0}}{x_1},\frac{y_1}{y_0}\right) = (x,y).
    \end{aligned}$$
Restricting the foliation $\mathcal{F}_{\Omega}$ to the open $U_{10}\cong \mathbb{A}_{\corpo}^{2}$ we obtain a foliation given by the $1$-form:
$
\sigma = \alpha ydx-xdy
$
and so $M\cap F_{0} = \{y=0\}\cap\{x=0\} = \{(0,0)\}$ is a $p$-reduced singularity. In this way, we conclude that $\mathcal{F}_{\Omega}$ satisfies (\ref{dois}). This finishes the proof for $d_2 = d_1 = 0$.
\end{proof}

We pass now to the a more general case.

\begin{prop} Let $d_{1},d_{2}\in \mathbb{Z}_{\geq0}$ such that $p\nmid d_{i}$, if $d_{i}\neq 0$. Suppose that
$$(d_1,d_2) \in \{(0,0)\}\cup\{(l,0)\in \mathbb{Z}^2\mid l>0 \mbox{ and } p\nmid l \}\cup\{(0,l)\in \mathbb{Z}^2\mid l>0 \mbox{ and } p\nmid l \}. 
$$
Then, a generic foliation in $\Projumprod$ with cotangent divisor $K_{\mathcal{G}}\equiv d_{1}F+d_{2}M$ satisfies (\ref{um}),(\ref{dois}) and (\ref{tres}).
\end{prop}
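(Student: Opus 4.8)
The plan is to reduce to the single family $(d_1,d_2)=(l,0)$ with $l>0$ and $p\nmid l$, to settle the base case $l=1$ by an explicit Riccati construction, and to obtain every other $l$ by a ramified pullback of that base case. For the reduction I would use the automorphism of $\Projumprod$ exchanging the two factors: it swaps $F\leftrightarrow M$ and sends a foliation with $K\equiv d_1F+d_2M$ to one with $K\equiv d_2F+d_1M$, so the case $(0,l)$ follows from $(l,0)$, while $(0,0)$ is the preceding proposition.

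For $l=1$ I would represent the foliation in the chart $U_{11}$ by a Riccati form $\omega=\hat a(x,y)\,dx+\hat b(x)\,dy$—this is the shape forced by the global bi-homogeneous description once $F_0=\{x_0=0\}$ and $M=\{y_1=0\}$ are required invariant—choosing $\hat a,\hat b$ generic subject to $F_0$, $F_1=\{x_1=0\}$ and $M$ being invariant (so that $x\mid\hat b$ and $\deg\hat b\le 2$) and to $Q=F_0\cap M$ being non-degenerate with eigenvalue $\alpha\notin\mathbb{F}_p$. Then $K_{\mathcal{G}}\equiv F_0$ by the bidegree, which is (\ref{um}); the explicit linear part at $Q$ gives (\ref{dois}); and Lemma \ref{intlocal} shows $\mathcal{G}$ is not $p$-closed. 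For (\ref{tres}) I would note that $\Delta_{\mathcal{G}}\equiv(p+3)F+2M$ and that, by Proposition \ref{inv}, the invariant fibers $F_0,F_1$, the fiber over the remaining root of $\hat b$, and the section $M$ all lie in its support; since the singularities on them are $p$-reduced, \cite[Fact 2.8]{MR3687427} (exactly as in Lemma \ref{explosao}) forces each to occur with multiplicity one. Subtracting them leaves a residual curve $C$ of class $pF+M$, i.e.\ a section of $F$-degree $p$.

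For general $l\ge2$ with $p\nmid l$ I would pull back the base foliation $\mathcal{G}_1$ by the degree-$l$ cover $\Psi_l\colon([x_0:x_1],[y_0:y_1])\mapsto([x_0^l:x_1^l],[y_0:y_1])$, which ramifies to order $l-1$ along the invariant fibers $F_0+F_1$, and saturate. In the affine chart the pulled-back form acquires a common factor $x^{l-1}$; after removing it a bidegree count gives $K_{\mathcal{G}}\equiv lF_0$, the ramification contributing nothing to $K_{\mathcal{G}}$ precisely because $F_0,F_1$ are invariant—this is (\ref{um}). Near $Q$ the cover reads $(x,\tilde y)\mapsto(x^l,\tilde y)$, so the normal form $\tilde y\,dx-\alpha x\,d\tilde y$ pulls back, after saturation, to $\tilde y\,dx-(\alpha/l)x\,d\tilde y$; as $p\nmid l$ we still have $\alpha/l\notin\mathbb{F}_p$, giving (\ref{dois}) and, via Lemma \ref{intlocal}, non-$p$-closedness. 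Finally $\Delta_{\mathcal{G}}\equiv((p+1)l+2)F+2M$, and I would identify it as $F_0+F_1+\Psi_l^{-1}(F_r)+M+\Psi_l^{*}C$: the $l$ preimages of the unramified invariant fiber $F_r$ are distinct since $p\nmid l$, the fibers $F_0,F_1$ and $M$ keep multiplicity one because their singularities stay $p$-reduced, and $\Psi_l^{*}C$ is reduced—indeed an irreducible section of class $plF+M$—by Lemma \ref{pullback} applied to the equation of $C$. A degree comparison then shows $\Delta_{\mathcal{G}}$ equals this sum of distinct prime divisors, hence is reduced, and Lemma \ref{aberto} promotes the example to the asserted genericity.

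The hard part will be the base case $l=1$: the fibers and $M$ are immediately seen to be reduced, but proving that the residual section $C$ of class $pF+M$ is irreducible—rather than degenerating into extra invariant fibers or splitting into lower-degree pieces—is the delicate point. I would handle it as in the degree-two plane case: blow up $Q$, analyse the induced foliation on the resulting ruled surface, and use $C\cdot F=1$ together with the genericity of $\hat a,\hat b$ to exclude any further invariant fiber, forcing $C$ to be a single irreducible section. Once $C$ is under control, Lemma \ref{pullback} makes the passage to all admissible $l$ essentially formal.
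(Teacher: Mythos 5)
Your proposal follows the paper's own route essentially step for step: the factor-swap symmetry reducing $(0,l)$ to $(l,0)$, a Riccati base case with exactly three invariant fibers, an invariant fiber $M$ of the second projection and $p$-reduced corner singularities, and then pullback under $([x_{0}:x_{1}],[y_{0}:y_{1}])\mapsto([x_{0}^{l}:x_{1}^{l}],[y_{0}:y_{1}])$ using Lemma \ref{pullback} and a bidegree comparison, with the eigenvalue at $Q$ rescaled by a unit since $p\nmid l$. The delicate point you flag is settled in the paper without any blow-up: writing the residual as $D=C+R$ with $C\cdot F=1$, $C\equiv uF+M$ and $R\equiv vF$, one gets $v\equiv 0 \bmod p$ (a fiber of $R$ with multiplicity prime to $p$ would be a fourth invariant fiber, contradicting the construction) together with $u+v = D\cdot M_{0}=p$, so $v\neq 0$ would force $C\equiv M$ and contradict that $M$ is the unique invariant fiber of $\pi_{2}$ with $\od_{M}(\Delta_{\mathcal{F}})=1$, whence $v=0$ and $C$ is the desired irreducible curve of class $pF+M$.
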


\begin{proof} The case $d_1= d_2 = 0$ was considered in the precedent proposition. We will consider the case where $d_{1} = l$ and $d_{2} = 0$ for $l>0$ with $p\nmid l$. By the symmetry of the the problem we will automatically consider the case where $d_{1} = 0$ and $d_{2} = l$ for $l>0$ with $p\nmid l$.

\medskip

\noindent \textbf{Case $l=1$:} Let $\mathcal{F}$ be a Riccati foliation with respect the first projection $\pi_1\colon\Projumprod\longrightarrow \Projum.$ Recall (see \cite[Chapter 4, Section 1]{MR3328860}) that a Riccati foliation is defined as a foliation in $\Projumprod$ whose the general fiber $F$ of $\pi_1$ is transverse to $\mathcal{F}$. Suppose that
        \begin{enumerate}[(a)]
            \item \label{A} The foliation $\mathcal{F}$ leaves invariant only tree fibers of the first projection $\pi_1$.  Denote that fibers by $F_{0},F_{1},F_{2}.$
            \item \label{B} The foliation $\mathcal{F}$ leaves invariant only one fiber of the second projection $\pi_{2}$. Denote that fiber by $M$.
            \item \label{C} The intersections $\{Q_{i}\} = F_{i}\cap M$ are $p$-reduced singularities of $\mathcal{F}$.
    \end{enumerate}
    We show that that foliation satisfies 
 (\ref{um}),(\ref{dois}) and (\ref{tres}). Note that by (\ref{A}) (see \cite[Section 1, Chapter 4]{MR3328860})
        $$
            K_{\mathcal{F}} = \pi_{1}^{*}K_{\Projum}+F_{0}+F_{1}+F_{2} \equiv -2F+3F = F.
        $$
    So, $\mathcal{F}$ satisfies (\ref{um}). Note that the item (\ref{C}) ensures that $\mathcal{F}$ is not $p$-closed (see Lemma \ref{intlocal}) and that $\mathcal{F}$ satisfies (\ref{dois}), module a change of coordinates. We will show that 
 $\mathcal{F}$ satisfies (\ref{tres}). Observe that
    $$
        D = \Delta_{\mathcal{F}}-F_{0}-F_{1}-F_{3}-M \geq 0
    $$
    and by (\ref{B}) it follows that  $\od_{M}(\Delta_{\mathcal{F}}) = \od_{F_{i}}(\Delta_{\mathcal{F}}) = 1 $ for all $i$ (see \cite[Fact 2.8]{MR3687427}). Since $\Delta_{\mathcal{F}} = pK_{\mathcal{F}}+N_{\mathcal{F}} \equiv pF_0+3F_0+2M_0 \equiv (p+2)F_0+2M_0$, the following equality holds
    $$
        D\cdot F = \Delta_{\mathcal{F}}\cdot F -M\cdot F =2-1 = 1.
    $$
    So, there exists an irreducible curve 
    $C\leq D$ such that $C\cdot F = 1$. Write
    $D = C+R$ and note that we have $C \equiv uF_{0}+M_{0}$ and $R \equiv vF_{0}$, for some $u,v \in \mathbb{Z}_{\geq 0}$. We will show that $v \equiv 0 \mod p$. Suppose, by contradiction, that $v \neq 0$. Then, since
    $R\equiv vF_{0}$ we conclude by the Proposition \ref{inv} that there exists a fiber of the first projection, $F_{4}\leq R$, that is $\mathcal{F}$-invariant. In the other hand, by construction of $\mathcal{F}$, we know that $F_{0}$,$F_{1}$ and $F_{2}$ are the complete list of $\mathcal{F}$-invariant fibers. Moreover, all that fibers has multiplicity $1$ along the $p$-divisor
     $\Delta_{\mathcal{F}}$. So, the fiber 
 $F_{4}$ can not exist and we conclude that $v \equiv 0 \mod p$. In the other hand,
        $$
            u+v = C\cdot M_{0}+R\cdot M_{0} =  D\cdot M_{0} = \Delta_{\mathcal{F}}\cdot M_{0}-3 = p+3-3 = p.
        $$
    So,  if $v \neq 0$ the condition $v \equiv 0 \mod p$ implies that $v = p$ and by consequence it follows $u = 0$. But this implies $C \equiv M$ so that $C$ is a fiber of the second projection which is  $\mathcal{F}$-invariant. This is a contradiction, since by construction we know that $M$ is the unique fiber of the second projection that is $\mathcal{F}$-invariant and satisfies $\od_{M}(\Delta_{\mathcal{F}}) = 1$. So, $v = 0$ and $C$ is an irreducible curve $\mathcal{F}$-invariant with $C \equiv pF_0+M_0$ and $\mathcal{F}$ has $p$-divisor given by
        $$
            \Delta_{\mathcal{F}} = F_0+F_1+F_2+M+C
        $$
    which is reduced. So, $\mathcal{F}$ satisfies (\ref{um}),(\ref{dois}) and (\ref{tres}) and we conclude the argument to the case $(d_1,d_2) =(1,0)$.

    \medskip 

  \noindent \textbf{Case $l>0$ and $p\nmid l$:} We use the precedent case ($l=1$) to study the present case.  Let $\mathcal{F}$ be a foliation in $\Projumprod$ as in the precedent case. As we had proved, a generic foliation which satisfies (\ref{A}),(\ref{B}) and (\ref{C}) has $p$-divisor in the form
        $$
            \Delta_{\mathcal{F}} = F_0+F_1+F_3+M+C
        $$
    where $C$ is an irreducible curve with $C\equiv pF_0+M_0$, $F_0,F_1$ and $F_2$ are fibers of the first projection $\pi_{1}$ and $M$ is a fiber of the second projection $\pi_2$.
    
    Let $\Phi\colon \Projumprod \longrightarrow\Projumprod $ be a finite ramified map of degree $l$ with ramification divisor $R$. Suppose that $\Phi$ ramifies only along the curves $F_0$ and $F_1$. Let $\mathcal{G} = \Phi^{*}\mathcal{F}$ the foliation defined by the pull-back of $\mathcal{F}$ by $\Phi$. Explicitly, if $\Omega$ is the bi-homogeneous $1$-form which defines  $\mathcal{F}$ then $\mathcal{G}$ is the foliation defined by the saturation of $\Phi^{*}\Omega$. Observe that since $F_0$ and $F_1$ are $\mathcal{F}$-invariant $K_{\mathcal{G}} = \Phi^{*}K_{\mathcal{F}}$. So, $ \Phi^{*}K_{\mathcal{F}} = \Phi^{*}F = lF$ and we conclude that $\mathcal{G}$ satisfies (\ref{um}) and $N_{\mathcal{G}} \equiv (l+2)F+2M$ and $\Delta_{\mathcal{G}}\equiv  pK_{\mathcal{G}}+N_{\mathcal{G}}\equiv (pl+l+2)F+2M$. By Lemma \ref{pullback} we know that $\Phi^{*}C$ and $\Phi^{*}F_{2}$ are reduced curves that has no irreducible components in common. In particular,
        $$
            \Delta_{\mathcal{G}} \geq \Phi^{*}C+\Phi^{*}F_{2}+F_{1}+F_{0}+M.
        $$
    in the other hand, $\Phi^{*}C \equiv \Phi^{*}(pF+M)\equiv plF+M$ and $\Phi^{*}F_{2} = lF$ and so
        $$
            \Phi^{*}C+\Phi^{*}F_{2}+F_{0}+F_{1}+M\equiv (pl+l+2)F+2M
        $$
    and by comparison degree we obtain the following equality
        $$
            \Delta_{\mathcal{G}} = \Phi^{*}C+\Phi^{*}F_{2}+F_{1}+F_{0}+M.
        $$
    In particular, $\Delta_{\mathcal{G}}$ is a reduced divisor and  $\mathcal{G}$ satisfies (\ref{tres}). We will show that 
    $\mathcal{G}$ satisfies (\ref{dois}). For that, let $Q$ the point in $F_{0}\cap M$. By the precedent case, more precisely, by
    (\ref{C}) we know that $Q$ is a $p$-reduced singularity. Fix $U$ an affine open set around  $Q = (0,0)$ such that $\Phi|_{\Phi^{-1}(U)}:\Phi^{-1}(U) \longrightarrow U$ is locally defined by $\Phi\colon (x,y)\mapsto (x^{l},y) = (\tilde{x},\tilde{y})$ and $\mathcal{F}$ is given by the $1$-form 
    $\omega = \alpha \tilde{y}d\tilde{x}-\tilde{x}d\tilde{y}+O(2)$. Then, $\Phi^{*}\mathcal{F}$ is given on $\Phi^{-1}(U)$ by the  $1$-form
        $
            \sigma = l\alpha ydx-xdy+O(2).
        $    
    Since we are assuming that $Q$ is a $p$-reduced singularity $\alpha \not\in \mathbb{F}_{p}$ and so $l\alpha \not\in \mathbb{F}_p$. We conclude that $\mathcal{G}$ has $\{Q\} = F\cap M$ as $p$-reduced singularity. In particular, satisfies (\ref{dois}). This finishes the proof of the case $(d_1,d_2) = (l,0)$ with  
    $l>1$ and $p\nmid l$.
The cases above finish the proof of the proposition.
\end{proof}
\subsection{Case B}

In this subsection, we show that a generic foliation  $\mathcal{G}$ in $\Projumprod$ with cotangent divisor $K_{\mathcal{G}} \equiv d_1F+d_2M$ satisfies the following
\begin{enumerate}[(i)]
    \item \label{Um}$K_{\mathcal{G}}\equiv d_{1}F_{0}+d_{2}M_{0}$ where $F_{0} = \{x_{0}= 0\}$ and $M_{0} = \{y_{0}=0\}$,
    \item \label{Dois} $F_{0}$ and $M= \{y_{1}=0\}$ are $\mathcal{G}$-invariant curves with  $\{Q\} = F_{0}\cap M$ a $p$-reduced singularity of $\mathcal{G}$,
    \item \label{Tres} The $p$-divisor $\Delta_{\mathcal{G}}$ is reduced
\end{enumerate}
when  $(d_1,d_2) \in \{(l_1,l_2)\in \mathbb{Z}^2\mid l_1,l_2>0 \mbox{ and } p\nmid l_1l_2 \}$. We start with the case $d_1 = d_1 = 1$.
\begin{prop} \label{primeirocasop1p1} A generic foliation in $\Projumprod$ with cotangent divisor $K_{\mathcal{G}}\equiv F+M$ satisfies (\ref{Um}),(\ref{Dois}) and (\ref{Tres}).
\end{prop}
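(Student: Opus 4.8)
The plan is to follow the template already used in Case A: exhibit one explicit foliation in the prescribed numerical class, strip its invariant curves off the $p$-divisor by a degree count, and invoke openness to upgrade the single example to the generic statement. Concretely, since $N_{\mathcal{G}}$ is determined by $K_{\mathcal{G}}\equiv F+M$ via adjunction, Lemma \ref{aberto} tells us that the set of foliations with reduced $p$-divisor in the corresponding family is open; hence for (\ref{Tres}) it suffices to produce a single foliation $\mathcal{G}$ with $K_{\mathcal{G}}\equiv F+M$ that also realizes (\ref{Um}) and (\ref{Dois}). So the first thing I would do is write a bihomogeneous $1$-form $\Omega$ of bidegree type $(d_1,d_2)=(1,1)$ in the Hirzebruch representation, tuned so that $F_0=\{x_0=0\}$ and $M=\{y_1=0\}$ are invariant and so that, in the chart $U_{10}\cong\mathbb{A}_{\corpo}^2$ around $Q=F_0\cap M$, the foliation reads $\sigma=\alpha y\,dx-x\,dy+O(2)$ with $\alpha\notin\mathbb{F}_p$. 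By Lemma \ref{intlocal} this choice forces $\mathcal{G}$ to be not $p$-closed and makes $Q$ a $p$-reduced singularity, so (\ref{Um}) and (\ref{Dois}) hold by construction.

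Next I would set up the numerics. Adjunction $K_{\Projumprod}=K_{\mathcal{G}}-N_{\mathcal{G}}$ gives $N_{\mathcal{G}}\equiv 3F+3M$, whence $\Delta_{\mathcal{G}}\equiv pK_{\mathcal{G}}+N_{\mathcal{G}}\equiv(p+3)(F+M)$. I would arrange the construction so that $\mathcal{G}$ leaves invariant exactly three fibers $F_0,F_1,F_2$ of $\pi_1$ and three fibers $M,M_1,M_2$ of $\pi_2$, with $p$-reduced singularities at the relevant intersection points; the numerical bound $\od_{\pi_i\text{-fiber}}\text{tang}(\mathcal{G},\pi_i)$ makes three the expected number in each ruling. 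By Proposition \ref{inv} these six fibers appear in $\Delta_{\mathcal{G}}$, and by \cite[Fact 2.8]{MR3687427} each appears with multiplicity one. Writing $\Delta_{\mathcal{G}}=(F_0+F_1+F_2)+(M+M_1+M_2)+C$, the degree count forces the residual to satisfy $C\equiv p(F+M)$.

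The hard part is (\ref{Tres}), namely showing that this residual $C\equiv p(F+M)$ is reduced. In Case A the analogous residual satisfied $C\cdot F=1$, which instantly displayed it as a single reduced component; here $C\cdot F=C\cdot M=p$, so that shortcut is unavailable and a new idea is required. My plan is to lower the intersection number by blowing up a distinguished singularity of $\mathcal{G}$ lying on $C$: on the blow-up the induced foliation has reduced $p$-divisor away from the exceptional curve by the mechanism of Lemma \ref{explosao}, and I would aim to show that the strict transform of $C$ meets an appropriate ruling in a single point, which forces it to be irreducible (hence reduced). Genericity of $\Omega$ then rules out spurious fiber components inside $C$ exactly as in Case A: any such component would produce a fourth invariant fiber of $\pi_1$ or $\pi_2$, contradicting Proposition \ref{inv} together with the construction. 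Finally, by the openness in Lemma \ref{aberto} the generic member of the family inherits reducedness, giving (\ref{Tres}).

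I expect the delicate point to be precisely the verification that $C$ carries a singularity of the right multiplicity (of order $p-1$ at the blown-up point, in analogy with the degree-two example on $\Projdois$) so that the blow-up drops $C\cdot(\text{fiber})$ to $1$; an alternative, should the blow-up bookkeeping prove unwieldy, is to realize $C$ from the outset as an explicit irreducible $(p,p)$-curve and build $\mathcal{G}$ as a logarithmic foliation along $C$ and the six fibers, so that $\Delta_{\mathcal{G}}$ is visibly the reduced sum of distinct prime divisors of the correct total class. Either way, once a single reduced example with (\ref{Um}) and (\ref{Dois}) is in hand, Lemma \ref{aberto} finishes the proof. This base case $(1,1)$ is then the input for the pullback argument (via Lemma \ref{pullback}) that will treat the general $(l_1,l_2)$ with $p\nmid l_1l_2$.
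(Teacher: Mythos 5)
Your skeleton (one explicit example, a degree count against $\Delta_{\mathcal{G}}\equiv(p+3)(F+M)$, then openness via Lemma \ref{aberto}) is the paper's skeleton, but the actual content of the proposition --- producing the irreducible invariant curve of class $p(F+M)$ --- is precisely what your proposal leaves unproved, and both of your fallback strategies fail concretely. For the blow-up route: Lemma \ref{explosao} assumes as a hypothesis that $\Delta_{\mathcal{F}}$ is already reduced, so invoking its ``mechanism'' to establish reducedness is circular; moreover the point you would need to blow up, a point of $C$ of multiplicity $p-1$, is a radial-type singularity (eigenvalue in $\mathbb{F}_{p}$), hence not $p$-reduced, so the lemma would not apply there in any case. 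The numerics also refuse to cooperate: with $C\equiv p(F+M)$ and $m_{Q}(C)=p-1$, the strict transform satisfies $\tilde{C}\equiv \pi^{*}(p(F+M))-(p-1)E$ on $Bl_{Q}(\Projumprod)$, and the only rulings of that surface are the pullbacks of $F$ and $M$, each meeting $\tilde{C}$ in $p$ points --- the trick that worked on $Bl_{Q}(\Projdois)$ in the proof of Theorem \ref{casoplano}, where $\tilde{C}\cdot(H-E)=p-(p-1)=1$, has no analogue here, and the elementary transformation to $\Sigma_{1}$ does not lower the fiber degree either. For the logarithmic route: a foliation defined by a logarithmic form with polar divisor the six fibers plus a $(p,p)$-curve has normal bundle of class $(p+3)(F+M)$, not $3(F+M)$, so it lies in the wrong family altogether; and prescribing an arbitrary irreducible $(p,p)$-curve to be invariant by a foliation with $K\equiv F+M$ is an existence claim with no justification --- such curves are exactly the special char-$p$ phenomenon one must construct, not posit. (A smaller discrepancy: your intended configuration of three invariant fibers in each ruling is not what occurs in the paper's example, which has two fibers per ruling plus a $(1,1)$-curve $\overline{L}$.)

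The paper sidesteps all of this by recycling work already done: the proof of Theorem \ref{casoplano} in degree $2$ produced, for generic $\omega = ydx-xdy+\tilde{b}\,dx+\tilde{a}\,dy$, an affine $p$-divisor $l_{1}+l_{2}+l_{3}+C$ with $C$ irreducible of degree $p$ and multiplicity $p-1$ at the origin, and by Remark \ref{proj} one may assume $C$ avoids $[0:1:0]$ and $[1:0:0]$. Bi-projectivizing this affine foliation into $\Projumprod$ via the chart $U_{11}$, Lemma \ref{compactificacao} guarantees that $\overline{C}$ is irreducible of class $p(F+M)$ and $\overline{L}\equiv F+M$; the four coordinate fibers $F_{0},F_{1},M_{0},M$ together with $\overline{L}$ and $\overline{C}$ then exhaust the class $(p+3)(F+M)$, forcing $\Delta = F_{0}+F_{1}+M_{0}+M+\overline{L}+\overline{C}$, which is reduced, and the computation in the chart $U_{10}$ exhibits the $p$-reduced corner at $F_{0}\cap M$. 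This transport of the plane example is the missing ingredient in your argument; without it (or a genuinely executed substitute), your proposal establishes (\ref{Um}) and (\ref{Dois}) but not (\ref{Tres}).
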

\begin{proof}  Indeed, in the open set $U_{11}$ consider the foliation defined by the $1$-form:
$
\omega = ydx-xdy+\tilde{b}(x,y)dx+\tilde{a}(x,y)dy.
$
In the proof of Theorem \ref{casoplano}, we saw that for a generic choice of  $\tilde{a},\tilde{b} \in \corpo[x,y]_{2}$ the $1$-form $\omega$ defines a foliation with reduced $p$-divisor in $\mathbb{A}_{\corpo}^{2}$ which is explicitly given by $
\Delta = l_{1}+l_{2}+l_{3}+C$ where $l_{1},l_{2},l_{3}$ are distinct lines that pass through point $(0,0)$ and $C$ is an irreducible curve that passes through point $(0,0)$ of degree $p$ and with multiplicity $p-1$ over $(0,0)$. Moreover, by the Remark \ref{proj} we can assume that $C$ does not passes through points $\{[0:1:0],[1:0:0]\}$. For simplicity, we will assume that $l_{1} = \{x=0\}$, $l_{2} = \{y=0\}$ and $l_{3} = \{ux+vy=0\}$ for some non zero constants $u,v \in \corpo$. In this case, we have $\tilde{a}(x,y) = xa(x,y)$ and $\tilde{b}(x,y) = yb(x,y)$ for some $a,b \in \corpo[x,y]_{1}$.

Using the isomorphism  $\phi\colon U_{11}\longrightarrow \mathbb{A}_{\corpo}^{2}$ which associates $([x_0:x_1],[y_0:y_1])\mapsto \left(\frac{x_0}{x_1}, \frac{y_0}{y_1}\right)$ 
we can compactify $\omega$ to a foliation in $\Projumprod$. In this case, the foliation obtained is explicitly given by the $1$-form: $
\Omega = A_{0}dx_{0}+A_{1}dx_{1}+B_{0}dy_{0}+B_{1}dy_{1}$
where
$$A_{0} = x_{1}y_{0}y_{1}(x_{1}y_{1}+b(x_{0}y_{1},x_{1}y_{0})),\qquad A_{1} = -x_{0}y_{0}y_{1}(x_{1}y_{1}+b(x_{0}y_{1},x_{1}y_{0})),
$$
$$
B_{0} = x_{0}x_{1}y_{1}(-x_{1}y_{1}+a(x_{0}y_{1},x_{1}y_{0})), \quad \mbox{and} \quad B_{1} = -x_{0}x_{1}y_{0}(-x_{1}y_{1}+a(x_{0}y_{1},x_{1}y_{0})).
$$
The projective $1$-form $\Omega$ defines
a foliation  $\mathcal{F}_{\Omega}$ in $\Projumprod$ such that $K_{\mathcal{F}_{\Omega}}\equiv F+M$ and so $\mathcal{F}_{\Omega}$ satisfies (\ref{Um}).
We claim that $\Delta_{\mathcal{F}_{\Omega}}$ 
is reduced. Indeed, note that
$$
\Delta_{\mathcal{F}_{\Omega}} \equiv pK_{\mathcal{F}_{\Omega}}+N_{\mathcal{F}_{\Omega}}\equiv p(F+M)+(3F+3M) = (p+3)(F+M).
$$
By the homogeneous equations of $\mathcal{F}_{\Omega}$, we see that $F_{0} = \{x_{0} = 0\}$, $F_{1} = \{x_{1} = 0\}$, $M_{0} = \{y_{0}=0\}$ and $M = \{y_{1}=0\}$ are $\mathcal{F}_{\Omega}$-invariant. Moreover, 
 bi-projetivizing the line $L$ via $\Phi$ in $\Projumprod$ we know by the Lemma \ref{compactificacao} that the obtained curve ,$\overline{L}$, is irreducible  $\mathcal{F}_{\Omega}$-invariant with $\overline{L}\equiv F_{0}+M_{0}.$ Since $C$ does not pass through the point $\{[0:1:0],[1:0:0]\}$ we can apply the Lemma  \ref{compactificacao} to conclude that the irreducible curve of degree $p$ in  $\mathbb{A}_{\corpo}^{2}$ projectivize to an irreducible curve, $\overline{C}$,  in $\Projumprod$ that is  $\mathcal{F}_{\Omega}$-invariant and such that: $\overline{C} \equiv p(F_{0}+M_{0})$. So, we obtain $
\Delta_{\mathcal{F}_{\Omega}} = F_{0}+F_{1}+M_{0}+M+\overline{L}+\overline{C}$ and we ensure that  $\Delta_{\mathcal{F}_{\Omega}}$ is a reduced divisor. This shows that $\mathcal{F}$ satisfies (\ref{Tres}).

Observe that restricting the foliation to the open set $U_{10} = \{y_{0}\neq 0\}\cap \{x_{1} \neq 0\}$ the foliation in $\mathbb{A}_{\corpo}^{2}$ is given by the $1$-form
$
\sigma = y_{1}(y_{1}+b(x_{0}y_{1},1))dx_{0}-x_{0}(-y_{1}+a(x_{0}y_{1},1))dy_{1}.
$
Since $a, b \in \corpo[x,y]_{1}$ are generic, we can assume that  $a(x_{0}y_{1},1) = a_{0}+O(2)$ and $b(x_{0}y_{1},1) = b_{0}+O(2)$ where $a_{0}/b_{0} \not\in \mathbb{F}_{p}$, so that we obtain a foliation in  $\Sigma_{0}$ which satisfies (\ref{Um}), (\ref{Dois}) and (\ref{Tres}) with $d_{1} = d_{2} = 1$.
\end{proof}

Using the precedent proposition we will consider the general case.

\begin{prop}  Let $d_{1},d_{2}\in \mathbb{Z}_{\geq0}$ such that $p\nmid d_{i}$,if $d_{i}\neq 0$. Suppose that
$$(d_1,d_2) \in \{(l_1,l_2)\in \mathbb{Z}^2\mid l_1,l_2>0 \mbox{ and } p\nmid l_1l_2 \}. 
$$
Then, a generic foliation in  $\Projumprod$ with cotangent divisor $K_{\mathcal{G}}\equiv d_{1}F+d_{2}M$ satisfies (\ref{Um}),(\ref{Dois}) and (\ref{Tres}).
\end{prop}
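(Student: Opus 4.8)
The plan is to bootstrap from the base case $(d_1,d_2)=(1,1)$ of Proposition \ref{primeirocasop1p1} by pulling back along a finite morphism that scales both rulings, in direct analogy with how Case A handled a single ruling. Let $\mathcal{F}$ be the foliation produced there, so that $K_{\mathcal{F}}\equiv F+M$ and
$$
\Delta_{\mathcal{F}} = F_{0}+F_{1}+M_{0}+M+\overline{L}+\overline{C},
$$
where $\overline{L}\equiv F_{0}+M_{0}$ and $\overline{C}\equiv p(F_{0}+M_{0})$ is irreducible and, by the construction in Proposition \ref{primeirocasop1p1} (via Remark \ref{proj}), avoids the coordinate points, so that its bi-homogeneous equation is divisible by none of $x_{0},x_{1},y_{0},y_{1}$. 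Writing $l_{1}=d_{1}$ and $l_{2}=d_{2}$, I would introduce the finite morphism
$$
\Phi\colon \Projumprod\longrightarrow\Projumprod,\qquad ([x_{0}:x_{1}],[y_{0}:y_{1}])\mapsto ([x_{0}^{l_{1}}:x_{1}^{l_{1}}],[y_{0}^{l_{2}}:y_{1}^{l_{2}}]),
$$
of degree $l_{1}l_{2}$, whose ramification divisor is supported on the four $\mathcal{F}$-invariant fibers $F_{0},F_{1},M_{0},M$.

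Setting $\mathcal{G}=\Phi^{*}\mathcal{F}$ (the saturation of $\Phi^{*}\Omega$), the fact that the ramification sits on invariant curves yields, exactly as in Case A, $K_{\mathcal{G}}=\Phi^{*}K_{\mathcal{F}}=\Phi^{*}(F+M)\equiv l_{1}F+l_{2}M=d_{1}F+d_{2}M$, which is (\ref{Um}); hence $N_{\mathcal{G}}\equiv(d_{1}+2)F+(d_{2}+2)M$ and
$$
\Delta_{\mathcal{G}}\equiv pK_{\mathcal{G}}+N_{\mathcal{G}}\equiv((p+1)d_{1}+2)F+((p+1)d_{2}+2)M.
$$
For (\ref{Tres}), the core of the argument, I would pass $\overline{L}$ and $\overline{C}$ to their affine equations in the chart $U_{11}$, where $\Phi$ is $(x,y)\mapsto(x^{l_{1}},y^{l_{2}})$, and apply Lemma \ref{pullback} once in $x$ and once in $y$; both steps are licit because $p\nmid l_{1}$, $p\nmid l_{2}$ and neither axis divides the equations, so $\Phi^{*}\overline{L}$ and $\Phi^{*}\overline{C}$ are reduced, and the coprimality half of that lemma shows they share no irreducible component with each other or with the coordinate fibers. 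Thus
$$
\Delta_{\mathcal{G}}\geq F_{0}+F_{1}+M_{0}+M+\Phi^{*}\overline{L}+\Phi^{*}\overline{C};
$$
since $\Phi^{*}\overline{L}\equiv l_{1}F+l_{2}M$ and $\Phi^{*}\overline{C}\equiv p(l_{1}F+l_{2}M)$, the right-hand side has numerical class $((p+1)l_{1}+2)F+((p+1)l_{2}+2)M=[\Delta_{\mathcal{G}}]$, so comparing degrees against $F+M$ forces equality, and $\Delta_{\mathcal{G}}$ is reduced.

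It remains to check (\ref{Dois}). The morphism $\Phi$ fixes $Q=F_{0}\cap M$ with full ramification, and in the chart $U_{10}$, where $\mathcal{F}$ is a $p$-reduced singularity of the form $\alpha y\,dx-x\,dy+O(2)$ with $\alpha\notin\mathbb{F}_{p}$, the pullback and saturation produce $\alpha l_{1}\,y\,dx-l_{2}\,x\,dy+O(2)$, whose eigenvalue $\alpha l_{1}/l_{2}$ again lies outside $\mathbb{F}_{p}$: indeed $l_{1}/l_{2}\in\mathbb{F}_{p}^{*}$, and multiplying $\alpha$ by a nonzero element of $\mathbb{F}_{p}$ cannot yield an element of $\mathbb{F}_{p}$. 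Hence $Q$ remains $p$-reduced and $\mathcal{G}$ satisfies (\ref{Dois}). I expect the main obstacle to be the reducedness step: one must verify that the two-variable substitution is genuinely governed by Lemma \ref{pullback} applied one variable at a time and that no coordinate fiber enters as a repeated or shared component — precisely the place where $p\nmid l_{1}l_{2}$ and the genericity of $\overline{C}$ (its avoidance of the coordinate points) are indispensable.
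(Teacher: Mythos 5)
Your proposal is correct and follows essentially the same route as the paper: pull back the base case $(1,1)$ of Proposition \ref{primeirocasop1p1} along a cyclic covering ramified over invariant fibers, use Lemma \ref{pullback} for reducedness and coprimality, compare numerical classes to pin down $\Delta_{\mathcal{G}}$, and check the eigenvalue $\alpha l_{1}/l_{2}\notin\mathbb{F}_{p}$ locally. The only (harmless) difference is that you use a single covering of bi-degree $(l_{1},l_{2})$ scaling both rulings at once, whereas the paper scales only the first factor to reach $(l,1)$ and then invokes symmetry in the second factor to obtain general $(d_{1},d_{2})$.
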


\begin{proof}  Let $l \in \mathbb{N}$ be a positive integer coprime to $p$ and consider the finite map:
$$\begin{aligned}
          \Phi_{l} \colon \Projumprod & \longrightarrow \Projumprod &  \\
        ([x_{0},x_{1}],[y_{0},y_{1}])  & \mapsto([x_{0}^{l}:x_{1}^{l}],[y_{0}:y_{1}]).
    \end{aligned}$$
Let $R_{\Phi_{l}}$ be the ramification divisor of $\Phi_{l}$ and $\mathcal{G}$ the foliation in $\Projumprod$ described by the saturation of the 
$1$-form $\Phi_{l}^{*}\Omega$, where $\Omega$ is the projectivization of the $1$-form in  $\mathbb{A}_{\corpo}^{2}$ given by
$
\sigma = y_{1}(y_{1}+b(x_{0}y_{1},1))dx_{0}-x_{0}(-y_{1}+a(x_{0}y_{1},1))dy_{1}
$
where $a, b \in \corpo[x,y]_{1}$ are generic with
$a(x_{0}y_{1},1) = a_{0}+O(2)$, $b(x_{0}y_{1},1) = b_{0}+O(2)$ and $a_{0}/b_{0} \not\in \mathbb{F}_{p}$. Recall that by Proposition \ref{primeirocasop1p1} we can assume that the foliation defined by $\Omega$, $\mathcal{F}_{\Omega}$, is not $p$-closed with  $p$-divisor given by
$
\Delta_{\mathcal{F}_{\Omega}} = F_0+F_1+M_0+M+\overline{L}+\overline{C} 
$
where $F_{0} = \{x_0 = 0\}$, $F_1 =\{x_1=0\}$, $M_0 = \{y_0=0\}$, $M = \{y_1=0\}$ and 
with $\overline{L}$, $\overline{C}$ irreducible curves such that
$
\overline{C} \equiv p(F_0+M_0) \quad \mbox{and} \quad \overline{L}\equiv F_0+M_0.
$
We have $R_{\Phi_{l}} = (l-1)F_{0}+(l-1)F_{1} \equiv 2(l-1)F_{0}$ and since $F_{0}$ and $F_{1}$ are $\mathcal{F}_{\Omega}$-invariant we conclude that $K_{\mathcal{G}} = \Phi^{*}K_{\mathcal{F}} = \Phi^{*}(F+M) = lF+M$. In explicit terms, the foliation $\mathcal{G}$ is defined by the $1$-form:
$
\gamma =  \Tilde{A}_{0}dx_{0}+\Tilde{A}_{1}dx_{1}+\Tilde{B}_{0}dy_{0}+\Tilde{B}_{1}dy_{1}
$
where
$$\Tilde{A}_{0} = lx_{1}y_{0}y_{1}(x_{1}^{l}y_{1}+b(x_{0}^{l}y_{1},x_{1}^{l}y_{0})),\qquad \Tilde{A}_{1} = -lx_{0}y_{0}y_{1}(x_{1}^{l}y_{1}+b(x_{0}^{l}y_{1},x_{1}^{l}y_{0})),
$$
$$
\Tilde{B}_{0} = x_{0}x_{1}y_{1}(-x_{1}^{l}y_{1}+a(x_{0}^{l}y_{1},x_{1}^{l}y_{0})), \quad \mbox{and} \quad
\Tilde{B}_{1} = -x_{0}x_{1}y_{0}(-x_{1}^{l}y_{1}+a(x_{0}^{l}y_{1},x_{1}^{l}y_{0})).
$$

We will show the following equality:
    $
        \Delta_{\mathcal{G}} = F_{0}+F_{1}+M_{0}+M+\Phi_{l}^{*}\overline{L}+\Phi_{l}^{*}\overline{C}.
    $
Note that the curves $\Phi_{l}^{*}\overline{L}$ and $\Phi_{l}^{*}\overline{C}$ are $\mathcal{G}$-invariant. Indeed, since $\mdc(l,p) = 1$ and $x_{0},x_{1}$ do not divide the equations which define $\overline{L}$ and $\overline{C}$, using the Lemma \ref{pullback} we conclude that $\Phi_{l}^{*}\overline{L}$ and $\Phi_{l}^{*}\overline{C}$ are reduced and have distinct irreducible components. In particular, follows that  $\Phi_{l}^{*}\overline{L}$ and $\Phi_{l}^{*}\overline{C}$ are $\mathcal{G}$-invariant (see Proposition \ref{inv}). Since $F_{0}$, $F_{1}$, $M_{0}$, $M$ are $\mathcal{G}$-invariant 
    $
        \Delta_{\mathcal{G}}\geq F_{0}+F_{1}+M_{0}+M+\Phi_{l}^{*}\overline{L}+\Phi_{l}^{*}\overline{C}.
    $
Note that $\Delta_{\mathcal{G}}$ is a divisor that has bi-degree given by the formula:
    $$ 
        pK_{\mathcal{G}}+N_{\mathcal{G}} \equiv p(lF_{0}+M_{0})+((l+2)F_{0}+3M_{0})\equiv (pl+l+2)F_{0}+(p+3)M_{0}.
    $$ 
In the other hand
$$F_{0}+F_{1}+M_{0}+M+\Phi_{l}^{*}\overline{L}+\Phi_{l}^{*}\overline{C} \equiv F_{0}+F_{0}+M_{0}+M_{0}+(lF_{0}+M_{0})+(plF_{0}+pM_{0})$$
$$
= (pl+l+2)F_{0}+(p+3)M_{0}.$$
So, by bi-degree comparison,  we conclude the following equality
$
\Delta_{\mathcal{G}} = F_{0}+F_{1}+M_{0}+M+\Phi_{l}^{*}\overline{L}+\Phi_{l}^{*}\overline{C}.
$
This shows that $\mathcal{G}$ satisfies (\ref{Tres}). 

We need to show that $F_0\cap M  = \{x_{0} = 0\}\cap \{y_{1}=0\}$ is a $p$-reduced singularity of $\mathcal{G}$.  Indeed, restricting the foliation $\mathcal{G}$ to the open set $U_{10}$ we obtain a foliation in $\mathbb{A}_{\corpo}^{2}$ given by the $1$-form
$
\omega = ly(y+b(x^{l}y,1))dx-x(-y+a(x^{l}y,1))dy
$
and we conclude that $(0,0)$ is $p$-reduced since we are assuming that $a(x_{0}y_{1},1) = a_{0}+O(2)$ and $b(x_{0}y_{1},1) = b_{0}+O(2)$ with $a_{0}/b_{0}\not\in \mathbb{F}_{p}$. By symmetry, that is, replacing $F_{0}$ by $M_{0}$,
follows that given $d_{1},d_{2}\in \mathbb{Z}_{\geq 0}$ with $p\nmid d_{i}$ (if $d_{i}\neq 0$) we can construct foliations in 
$\Projumprod$ with the following properties:

\begin{enumerate}[(i)]

    \item The canonical divisor $K_{\mathcal{G}}$ is numerically equivalent to the divisor  $d_{1}F_{0}+d_{2}M_{0}$ where $F_0$ and $M_0$ are fibers of  the first and second projection, respectively;
 
    \item The curves $F_{0}$ and $M_0$ are $\mathcal{G}$-invariant and  $\{Q\} = F_{0}\cap M_0$ is a $p$-reduced singularity of $\mathcal{G}$;
    
    \item The $p$-divisor $\Delta_{\mathcal{G}}$ is reduced.
\end{enumerate}
This finishes the proof of the proposition.
\end{proof}

Considering the junction of the case A and B we obtain the complete proof of the Theorem \ref{casop1p1}.

\section{The \texorpdfstring{$p$}{p}-divisor for foliations on Hirzebruch surfaces} \label{pdivisorSigma}

Let $d\in \mathbb{Z}_{\geq 0}$ and $M_{d}$ be a section of the  natural projection $\pi: \Sigma_{d}\longrightarrow \Projum$ and $F$ be a fiber of $\pi$ such that $M_{d}^{2} = d$ and $M_{d}\cdot F = 1$. The curves $M_{d}$ and $F$ form a base of the vector space $\Num_{\mathbb{Q}}(\Sigma_{d})$. Note that for any other divisor
$D \in \Div(\Sigma_{d})$ such that $\langle D, F \rangle=  \Num_{\mathbb{Q}}(\Sigma_{d})$ and which satisfies $D^{2} = d$ and $D\cdot F = 1$ we have $D \equiv M_{d}$. Indeed, write
$D = aM_{d}+bF$ for some $a,b \in \mathbb{Q}$ and observe that $1= D\cdot F = a$ and $d = D^{2} = a^{2}d+2ab = d+2b$ and so $b = 0$.

Fix $\{F,M_{d}\}$ as a basis for $\Num_{\mathbb{Q}}(\Sigma_{d})$ and let $Q$ be the point in the intersection $F\cap M$, where $M=\{y_1=0\}$ is the curve in $\Sigma_{d}$ with negative self-intersection, that is, $M^{2} = -d$. Let
$Bl_{Q}\colon Bl_{Q}(\Sigma_{d}) \longrightarrow  \Sigma_{d}$ the blowup of $\Sigma_{d}$ at $Q$ and
$
c_{\Tilde{F}}\colon Bl_{Q}(\Sigma_{d}) \longrightarrow \Sigma_{d+1} 
$
the map that consists in the contraction of the strict transform of 
$F$ in $Bl_{Q}(\Sigma_{d})$. Let $\overline{E}$ the exceptional divisor associated to $Bl_{Q}$ and 
$\Phi_{d}^{d+1}\colon \Sigma_{d+1}-\!\! \rightarrow \Sigma_{d}$
the rational map which consist in the composition of birational maps:
$$
\Phi_{d}^{d+1} = Bl_{Q}\circ c_{\Tilde{F}}^{-1}: \Sigma_{d+1}-\!\! \rightarrow Bl_{Q}(\Sigma_{d}) \longrightarrow \Sigma_{d}. 
$$
Let $\Tilde{M}_{d+1}$ the strict transform of  $M_{d}$ in $Bl_{Q}(\Sigma_{d})$ and consider $E = (c_{\Tilde{F}})_{*}\overline{E}$ and $M_{d+1} = (c_{\Tilde{F}})_{*}\Tilde{M}_{d+1}$ the induced curves by the contraction. Observe that $\{E, M_{d+1}\}$ forms a basis for $\Num_{\mathbb{Q}}(\Sigma_{d})$ and satisfies the following conditions:
$E^{2} = 0$, $E\cdot M_{d+1} = 1$ and $M_{d+1}^{2} = d+1$.

\begin{lemma}\label{elementar}  If $\mathcal{F}$ 
is a foliation in $\Sigma_{d}$ with $K_{\mathcal{F}} \equiv d_{1}F+d_{2}M_{d}$ then $\mathcal{G}=(\Phi_{d}^{d+1})^{*}\mathcal{F}$ is a foliation in $\Sigma_{d+1}$ with $
K_{\mathcal{G}} \equiv (d_{1}-l(Q)+1)E+d_{2}M_{d+1}.$
\end{lemma}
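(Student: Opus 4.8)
The plan is to factor the birational map $\Phi_{d}^{d+1} = Bl_{Q}\circ c_{\Tilde{F}}^{-1}$ through $Bl_{Q}(\Sigma_{d})$ and to track the foliated canonical class across the blowup $Bl_{Q}$ and the contraction $c_{\Tilde{F}}$ separately. Write $\mathcal{H} = Bl_{Q}^{*}\mathcal{F}$ for the pulled back foliation on $Bl_{Q}(\Sigma_{d})$, so that $\mathcal{G} = (c_{\Tilde{F}})_{*}\mathcal{H}$. First I would record how $K_{\mathcal{H}}$ relates to $Bl_{Q}^{*}K_{\mathcal{F}}$: by the very definition of $l(Q)$ we have $N_{\mathcal{H}}^{*} = Bl_{Q}^{*}N_{\mathcal{F}}^{*}+l(Q)\overline{E}$, i.e. $N_{\mathcal{H}} = Bl_{Q}^{*}N_{\mathcal{F}}-l(Q)\overline{E}$. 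Combining this with $K_{Bl_{Q}(\Sigma_{d})} = Bl_{Q}^{*}K_{\Sigma_{d}}+\overline{E}$ and the adjunction $K_{\Sigma_{d}} = K_{\mathcal{F}}-N_{\mathcal{F}}$ gives
$$
K_{\mathcal{H}} = K_{Bl_{Q}(\Sigma_{d})}+N_{\mathcal{H}} = Bl_{Q}^{*}K_{\mathcal{F}}+(1-l(Q))\overline{E}.
$$

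Next I would rewrite $Bl_{Q}^{*}K_{\mathcal{F}}$ in terms of the curves adapted to the two exceptional processes. Since $Q$ lies on $F$ with multiplicity one we have $Bl_{Q}^{*}F = \Tilde{F}+\overline{E}$, and since the positive and negative sections of $\Sigma_{d}$ are disjoint we have $Q\notin M_{d}$, whence $Bl_{Q}^{*}M_{d} = \Tilde{M}_{d+1}$. Substituting $K_{\mathcal{F}}\equiv d_{1}F+d_{2}M_{d}$ into the formula above yields
$$
K_{\mathcal{H}} \equiv d_{1}\Tilde{F}+d_{2}\Tilde{M}_{d+1}+(d_{1}+1-l(Q))\overline{E}.
$$
I would then push this class forward along $c_{\Tilde{F}}$. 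Because $c_{\Tilde{F}}$ contracts the $(-1)$-curve $\Tilde{F}$ and $\mathcal{H} = c_{\Tilde{F}}^{*}\mathcal{G}$, applying $(c_{\Tilde{F}})_{*}$ and using $(c_{\Tilde{F}})_{*}c_{\Tilde{F}}^{*} = \mathrm{id}$ together with $(c_{\Tilde{F}})_{*}\Tilde{F} = 0$ gives $K_{\mathcal{G}} = (c_{\Tilde{F}})_{*}K_{\mathcal{H}}$. Since $(c_{\Tilde{F}})_{*}\overline{E} = E$ and $(c_{\Tilde{F}})_{*}\Tilde{M}_{d+1} = M_{d+1}$ while the $\Tilde{F}$ term dies, this produces
$$
K_{\mathcal{G}} \equiv (d_{1}-l(Q)+1)E+d_{2}M_{d+1},
$$
which is the asserted formula.

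The step I expect to be most delicate is the passage across the contraction, namely the identity $K_{\mathcal{G}} = (c_{\Tilde{F}})_{*}K_{\mathcal{H}}$: one must know that the foliated canonical class is compatible with the contraction of the $(-1)$-curve $\Tilde{F}$, and, prior to that, that $\mathcal{G}$ genuinely defines a foliation on $\Sigma_{d+1}$. The cleanest justification is that the pushforward (after saturation) of a foliation under a birational morphism is again a foliation, so that $\mathcal{H}$ coincides with the pullback $c_{\Tilde{F}}^{*}\mathcal{G}$; the blowup formula applied to $c_{\Tilde{F}}$ then shows that $K_{\mathcal{H}}-c_{\Tilde{F}}^{*}K_{\mathcal{G}}$ is supported on $\Tilde{F}$ and therefore vanishes under $(c_{\Tilde{F}})_{*}$. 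The remaining ingredients — the total-transform identities $Bl_{Q}^{*}F = \Tilde{F}+\overline{E}$ and $Bl_{Q}^{*}M_{d} = \Tilde{M}_{d+1}$, and the pushforward values of $\overline{E}$, $\Tilde{M}_{d+1}$, $\Tilde{F}$ — amount to routine numerical bookkeeping on the three surfaces $\Sigma_{d}$, $Bl_{Q}(\Sigma_{d})$ and $\Sigma_{d+1}$.
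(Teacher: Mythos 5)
Your proposal is correct and follows essentially the same route as the paper: factor $\Phi_{d}^{d+1}$ through $Bl_{Q}(\Sigma_{d})$, establish $K_{\mathcal{H}} \equiv Bl_{Q}^{*}K_{\mathcal{F}}+(1-l(Q))\overline{E}$, substitute $Bl_{Q}^{*}F = \Tilde{F}+\overline{E}$ and $Bl_{Q}^{*}M_{d} = \Tilde{M}_{d+1}$, and push forward along $c_{\Tilde{F}}$ using $K_{\mathcal{G}} = (c_{\Tilde{F}})_{*}K_{\mathcal{H}}$. Your additional justifications (deriving the blowup formula from the definition of $l(Q)$ via adjunction, and explaining why the canonical class is compatible with the contraction) are details the paper takes for granted, so they strengthen rather than diverge from its argument.
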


\begin{proof} Let $\mathcal{H} = Bl_{Q}^{*}\mathcal{F}$ be the induced foliation in $Bl_{Q}(\Sigma_{d})$. Then, 
$K_{\mathcal{G}} = (c_{\Tilde{F}})_{*}K_{\mathcal{H}}$ and 
$$
K_{\mathcal{H}} = Bl_{Q}^{*}K_{\mathcal{F}}+(1-l(Q))E \equiv Bl_{Q}^{*}(d_{1}F+d_{2}M_{d})+(1-l(Q))\overline{E}.
$$
Since $Bl_{Q}^{*}F = \Tilde{F}+\overline{E}$ and $Bl_{Q}^{*}M_{d} = \Tilde{M}_{d+1}$, we conclude that
$
K_{\mathcal{H}} \equiv d_{1}\Tilde{F}+d_{2}\Tilde{M}_{d+1}+(d_{1}-l(Q)+1)\overline{E}
$
and it follows
$$
K_{\mathcal{G}} = (c_{\Tilde{F}})_{*}K_{\mathcal{H}} \equiv (d_{1}-l(Q)+1)(c_{\Tilde{F}})_{*}\overline{E}+d_{2}(c_{\Tilde{F}})_{*}\Tilde{M}_{d+1} \equiv (d_{1}-l(Q)+1)E+d_{2}M_{d+1}
$$
which finishes the proof of the lemma.
\end{proof}

\begin{thmB} Let $\Sigma_{d}$ the $d$-Hirzebruch surface over $\corpo$ and $d_{1},d_{2} \in \mathbb{Z}_{\geq 0}$ such that $p\nmid d_{i}$, if $d_i\neq 0$. Then, a generic foliation in $\Sigma_{d}$ with normal bundle  $N\equiv (d_{1}-d+2)F+(d_{2}+2)M_{d}$ has reduced $p$-divisor. More precisely, $U(\Sigma_{d},\mathcal{O}_{\Sigma_{d}}(F+M_{d}),N)\neq \emptyset$ if
\begin{itemize}
    \item $N\cdot F-2\geq 0$ and $p\nmid N\cdot F-2$ (if nonzero);
    \item  $\deg(N)-(1+d)N\cdot F+d-2 \geq 0$ and $p\nmid \deg(N)-(1+d)N\cdot F+d-2$ (if nonzero).
\end{itemize}
\end{thmB}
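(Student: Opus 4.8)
The plan is to reduce Theorem \ref{Hirz} to the construction of a \emph{single} example on $\Sigma_d$ and then to transport an example from $\Sigma_0=\Projumprod$ up to $\Sigma_d$ by iterated elementary transformations. First I would invoke the openness Lemma \ref{aberto}: since $U(\Sigma_d,\mathcal{O}_{\Sigma_d}(F+M_d),N)$ is open in $\mathbb{F}ol_N(\Sigma_d)$, it suffices to exhibit one foliation with normal bundle $N$ and reduced $p$-divisor. By adjunction $K_{\Sigma_d}=K_{\mathcal{F}}-N_{\mathcal{F}}$ together with $K_{\Sigma_d}\equiv (d-2)F-2M_d$, the normal class $N\equiv(d_1-d+2)F+(d_2+2)M_d$ is equivalent to the canonical class $K_{\mathcal{F}}\equiv d_1F+d_2M_d$. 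A short intersection computation also identifies the two displayed numerical hypotheses with $d_1,d_2\ge 0$ and $p\nmid d_i$ (when $d_i\neq 0$): using $F^2=0$, $M_d\cdot F=1$, $M_d^2=d$ one gets $N\cdot F-2=d_2$ and $\deg(N)-(1+d)N\cdot F+d-2=d_1$. Thus the theorem becomes: for every $d\ge 0$ and every such pair $(d_1,d_2)$ there is a foliation on $\Sigma_d$ with $K_{\mathcal{F}}\equiv d_1F+d_2M_d$ and reduced $p$-divisor.

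I would prove this by induction on $d$, carrying along the three properties already used on $\Projumprod$: (i) $K_{\mathcal{F}}\equiv d_1F+d_2M_d$; (ii) the section $M$ with $M^2=-d$ is $\mathcal{F}$-invariant and meets an invariant fiber $F_\ast$ in a $p$-reduced singularity $Q=F_\ast\cap M$ (so $F_\ast$ and $M$ are its two separatrices); (iii) $\Delta_{\mathcal{F}}$ is reduced. The base case $d=0$ is exactly Theorem \ref{casop1p1}, whose Case A/Case B constructions deliver (i)--(iii) with $M=\{y_1=0\}$ and $Q=F_\ast\cap M$. For the inductive step I would feed such an $\mathcal{F}$ into the elementary transformation $\Phi_d^{d+1}$ of Lemma \ref{elementar} centered at $Q$, and set $\mathcal{G}=(\Phi_d^{d+1})^\ast\mathcal{F}=(c_{\tilde F_\ast})_\ast(Bl_Q^\ast\mathcal{F})$. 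Because $Q$ is $p$-reduced it is non-degenerate and non-dicritical, so (as in Lemma \ref{explosao}) $N_{Bl_Q^\ast\mathcal{F}}^\ast-Bl_Q^\ast N_{\mathcal{F}}^\ast=\overline E$ and hence $l(Q)=1$; Lemma \ref{elementar} then yields $K_{\mathcal{G}}\equiv d_1E+d_2M_{d+1}$, which is property (i) on $\Sigma_{d+1}$ with $E$ playing the role of the fiber.

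Re-establishing (ii) and (iii) is the heart of the argument. For (ii), the new negative section is $M_{d+1}^{\mathrm{neg}}=(c_{\tilde F_\ast})_\ast\tilde M$, the image of the strict transform of $M$; it is invariant as the birational image of an invariant curve, and the next $p$-reduced point is $R=E\cap M_{d+1}^{\mathrm{neg}}$, the image of $\overline E\cap\tilde M$. Writing the local model $\omega=y\,dx-\alpha x\,dy+O(2)$ at $Q$ with $\alpha\notin\mathbb{F}_p$ and passing to the chart $y=xt$, the pullback $Bl_Q^\ast\omega$ factors through $\overline E=\{x=0\}$ as $\tilde\omega=(1-\alpha)t\,dx-\alpha x\,dt+O(2)$, so the eigenvalue at $R$ is the image of $\alpha$ under a linear-fractional transformation with coefficients in the prime field; since such a map permutes $\mathbb{F}_p\cup\{\infty\}$, it sends $\corpo\setminus\mathbb{F}_p$ into itself, and $R$ is again $p$-reduced, lying on both the fiber $E$ and the new negative section. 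For (iii) I would argue reducedness is preserved by each half of $\Phi_d^{d+1}$: blowing up at the $p$-reduced $Q$ keeps $\Delta$ reduced by Lemma \ref{explosao} (giving $\Delta_{Bl_Q^\ast\mathcal{F}}=\tilde\Delta_{\mathcal{F}}+\overline E$), and contracting the invariant fiber $\tilde F_\ast$ is an isomorphism away from the single point $Q'$ it collapses, so every surviving component keeps multiplicity one; the only component through $Q'$ is $E$, which stays smooth of multiplicity one, whence $\Delta_{\mathcal{G}}$ is reduced.

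The main obstacle I expect is precisely this inductive step: one must check that the two operations composing $\Phi_d^{d+1}$ interact correctly with the $p$-divisor --- that $l(Q)=1$, that the eigenvalue transform at $R$ preserves the condition $\alpha\notin\mathbb{F}_p$, and that contracting the invariant fiber introduces no non-reduced behaviour at the collapsed point. Once the three properties are shown to propagate from $\Sigma_d$ to $\Sigma_{d+1}$, Lemma \ref{pullback} is needed only inside the base case, and the induction together with Lemma \ref{aberto} completes the proof of Theorem \ref{Hirz}.
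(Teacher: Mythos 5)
Your proposal is correct and follows essentially the same route as the paper: reduce via the openness Lemma \ref{aberto} and the numerical identifications $N\cdot F-2=d_2$, $\deg(N)-(1+d)N\cdot F+d-2=d_1$ to constructing one example, then induct on $d$ with base case Theorem \ref{casop1p1} and inductive step the elementary transformation $\Phi_d^{d+1}$ centered at the $p$-reduced point $Q=F_\ast\cap M$, propagating exactly the paper's three properties via Lemma \ref{explosao}, Lemma \ref{elementar} with $l(Q)=1$, and the fact that the contraction is an isomorphism off the collapsed curve. Your only deviation is cosmetic: you justify persistence of $p$-reducedness on the exceptional divisor by observing the eigenvalue transforms under a linear-fractional map over the prime field, where the paper simply computes $\sigma=(\alpha+1)t\,dx+x\,dt+O(2)$ directly.
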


\begin{proof} The proof will be done by induction on $d$. The case $d = 0$ was considerate in the Theorem \ref{casop1p1}. Suppose that the result is true for all $j$-Hirzebruch surfaces with $j\leq d$, that is, suppose that
for each $j\in\{0,\ldots,d\}$ and for each $d_{1},d_{2} \in \mathbb{Z}_{\geq 0}$ with $p\nmid d_{i}$ (if $d_{i}\neq 0$) 
we can find a foliation $\mathcal{H}$ such that
\begin{enumerate}[(i)]
    \item \label{uum} $K_{\mathcal{H}}\equiv d_{1}F_0+d_{2}M_{j}$, where $F_0$ is a fiber of the natural projection $\pi\colon \Sigma_{j}\longrightarrow \Projum$ and $M_{j}$ is a section of $\pi$ such that $M_{j}^{2} = j$ and $M_{j}\cdot F = 1$;
    \item \label{ddois} If $M$ is the section of  negative self-intersection of $\Sigma_{j}$ then $F_0$ and $M$ are $\mathcal{H}$-invariant and $\{Q\} = F_{0}\cap M$ is a $p$-reduced singularity of  $\mathcal{H}$ where $M^{2} = -j$;
    \item \label{ttres} The $p$-divisor $\Delta_{\mathcal{H}}$ is reduced.
\end{enumerate}
We will show that the same holds over $\Sigma_{d+1}$. Fix $d_{1}, d_{2} \in \mathbb{Z}_{\geq 0}$ such that $p\nmid d_{i}$, if $d_{i}\neq 0$. Let $\mathcal{G}$ a foliation in $\Sigma_{d}$ which satisfies (\ref{uum}),(\ref{ddois}) and (\ref{ttres}) for $j=d$.

Let $Q$ the $p$-reduced singularity in $F_0\cap M$ and $\pi_{Q}\colon Bl_{Q}(\Sigma_{d}) \longrightarrow \Sigma_{d}$ the blowup with center at $Q$. By the Lemma \ref{explosao}, we know that $Bl_{Q}^{*}\mathcal{F}$ is a foliation in $Bl_{Q}(\Sigma_{d})$ with reduced $p$-divisor. More precisely, if $\overline{E}$ denotes the exceptional divisor
and $\mathcal{G} = \pi_{Q}^{*}\mathcal{F}$, we know that
$
\Delta_{\mathcal{G}} = \Tilde{\Delta}_{\mathcal{F}}+\overline{E}.
$
We claim that since $Q$ is $p$-reduced we ensure that over $\overline{E}$ there exists at least a $p$-reduced singularity of $\mathcal{G}$. 

Indeed, in an open neighborhood of $Q$ the foliation is given by a $1$-form of the type$
\omega = \alpha ydx+xdy+O(2)$
and considering the chart $\pi_{Q}\colon (x,t)\mapsto (x,xt)$ we see that in a neighborhood of $\overline{E}$ the foliation $\mathcal{G}$ is given by the $1$-form:$
\sigma =(\alpha+1)tdx+xdt+O(2).$
In particular, $Q$ is $p$-reduced. Denote by $c_{F}\colon Bl_{Q}(\Sigma_{d}) \longrightarrow Y$ the contraction of the line $\Tilde{F}$ that  is the strict transform of $F_0$. Since $Q\in F_0$, the line $\Tilde{F}$ has self-intersection
$-1$ so that there exists that contraction. The out surface, $Y$,  is the Hirzebruch surface of type $d+1$ ($\Sigma_{d+1}$). We will show that $c_{F_{*}}\mathcal{G}$ is a foliation in $\Sigma_{d+1}$ that satisfies (\ref{uum}), (\ref{ddois}) and (\ref{ttres}). Let $\overline{E}$ the exceptional divisor associated with the blowup $Bl_{Q}$ and $\Tilde{M}_{d+1}$ the strict transform  of the curve  $M_{d}$. Denote by $E = c_{F_{*}}\overline{E}$ and $M_{d+1} = c_{F_{*}}\tilde{M_{d+1}}$ the induced curves by the contraction $c_{F}$. Observe that we have the following formulas: $E^{2} = 0$,$E\cdot M_{d+1} = 1$ and $M_{d+1}^{2} = d+1.$ If $\{P\} = c_{\Tilde{F}}(\Tilde{F})$ then $c_{\Tilde{F}}\colon Bl_{Q}(\Sigma_{d})-\{\Tilde{F}\} \longrightarrow \Sigma_{d+1}-\{P\}$ is an isomorphism. In particular, as $\Delta_{\mathcal{F}}$ is reduced we have
$\Delta_{c_{\Tilde{F}_{*}}\mathcal{G}}$ is reduced and so that  we have (\ref{ttres}). The local verification done above shows that $E\cap M = \{Q_{2}\}$ is a $p$-reduced singularity of  $c_{\Tilde{F}_{*}}\mathcal{G}$ and the Lemma \ref{elementar} ensures that $K_{\mathcal{G}}\equiv d_{1}E+d_{2}M_{d+1}$. So, we obtains (\ref{uum}). This finishes the proof of the theorem.
\end{proof}

\begin{OBS} Let $d \in \mathbb{Z}_{\geq 0}$ and $F$ be a section of the natural projection $\pi\colon\Sigma_{d}\longrightarrow \Projum$ and $M_{d}$ a section of $\pi$ such that $F\cdot M_{d} = 1$ and $M_{d}^{2} = d.$ Define
$$S_{d} = \{(d_{1},d_{2})\in \mathbb{Z}^{2}\mid \mbox{there exists a foliation in $\Sigma_{d}$ with canonical divisor $K\equiv d_{1}F+d_{2}M_{d}$}\}.$$

Follows from \cite[Proposition 3.6]{galindo2020foliations} that:
\begin{itemize}
    \item $S_{0} = \{(d_{1},d_{2})\in \mathbb{Z}^{2}\mid d_{1},d_{2}\geq 0\}\cup\{(-2,0)\}\cup\{(0,-2)\}$,


    \item If $d>0$: $S_{d} = \{(d_{1},d_{2})\in \mathbb{Z}^{2}\mid d_{1}\geq -1,d_{2}\geq 0\}\cup \{(d,-2)\}.$ 
\end{itemize}
Now, define
$$
R_{d} = \{(d_{1},d_{2})\in \mathbb{Z}^{2}\mid \mbox{there exists a foliation in $\Sigma_{d}$ with reduced $p$-divisor and $K\equiv d_{1}F+d_{2}M_{d}$}\}.
$$
Theorem \ref{Hirz} can be formulated in the following way:
\begin{itemize}
    \item $R_{0} = S_{0}-S_{0}(p)\cup\{(-2,0)\}\cup\{(0,-2)\},$
    \item Se $d>0$: $R_{d} = S_{d}-S_{d}(p)\cup\{(-1,d)\mid d \in \mathbb{Z}_{\geq0}\}\cup \{(d,-2)\}$
\end{itemize}
where $S_{d}(p)= \{(d_{1},d_{2})\in S_{d}\mid p\mid d_{1}\mbox{ or } p\mid d_{2}\}-\{(0,0)\}$. 
\end{OBS}

The following proposition shows that for each point $P$ on $l = \{(-1,e) \in \mathbb{Z}^2\mid e \in \mathbb{Z}_{\geq 0}\}$ we can find an example of foliation such that their $p$-divisor has a $p$-factor.

\begin{prop} Let $\mathcal{F}$ be a foliation that is not $p$-closed of degree $d$ in $\Projdois$ and $Q \in \Projdois-\sing(\mathcal{F})\cup \mathcal{Z}(\Delta_{\mathcal{F}})$. Let $\mathcal{G} = Bl_{Q}^{*}\mathcal{F}$ the induced foliation in $Bl_{Q}(\Projdois)$. Then,
$\mathcal{G}$ is not $p$-closed and $\Delta_{\mathcal{G}}$ has a  $p$-factor. Moreover,
$
K_{\mathcal{G}} \equiv -F+dM_{1}.
$
\end{prop}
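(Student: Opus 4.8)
The plan is to reduce everything to the behaviour of the normal and canonical sheaves under the blow-up of a \emph{regular} point of $\mathcal{F}$, and then to read off the $p$-divisor from its numerical class. First, since $Q\notin\sing(\mathcal{F})$ and $Q\notin\mathcal{Z}(\Delta_{\mathcal{F}})$, the map $\pi_{Q}$ restricts to an isomorphism $Bl_{Q}(\Projdois)\setminus E \xrightarrow{\sim} \Projdois\setminus\{Q\}$ carrying $\mathcal{G}$ to $\mathcal{F}$. Because $p$-closedness can be tested on any single open set (the Remark following the definition of $p$-closed), and $\mathcal{F}$ is not $p$-closed, a local generator of $\mathcal{G}$ satisfies $v\wedge v^{p}\neq 0$ away from $E$; hence $\mathcal{G}$ is not $p$-closed and $\Delta_{\mathcal{G}}$ is defined.

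Next I would compute $N_{\mathcal{G}}$ and $K_{\mathcal{G}}$. Working in a chart $\pi_{Q}\colon (x,t)\mapsto (x,xt)$ with $E=\{x=0\}$, if $\mathcal{F}$ is given near $Q$ by a $1$-form $\omega$ with $\omega(Q)\neq 0$, then $\pi_{Q}^{*}\omega$ does not vanish along $E$ (its restriction to $E$ is a nonzero multiple of $dx$), so $\pi_{Q}^{*}\omega$ is already saturated and $N_{\mathcal{G}}=\pi_{Q}^{*}N_{\mathcal{F}}$, i.e. $l(Q)=0$. Since $E$ is $\mathcal{G}$-invariant, the adjunction $K_{X}=K_{\mathcal{F}}-N_{\mathcal{F}}$ together with $K_{Bl_{Q}(\Projdois)}=\pi_{Q}^{*}K_{\Projdois}+E$ gives
$$K_{\mathcal{G}}=K_{Bl_{Q}(\Projdois)}+N_{\mathcal{G}}=\pi_{Q}^{*}K_{\mathcal{F}}+E.$$
Using $K_{\mathcal{F}}\equiv (d-1)H$ and the identifications $\pi_{Q}^{*}H\equiv M_{1}$, $E\equiv M_{1}-F$ on the Hirzebruch surface $Bl_{Q}(\Projdois)=\Sigma_{1}$, this yields $K_{\mathcal{G}}\equiv (d-1)M_{1}+(M_{1}-F)=-F+dM_{1}$, as claimed.

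Finally, for the $p$-factor I would combine the class computation with the locality above. From $[\Delta_{\mathcal{G}}]=pK_{\mathcal{G}}+N_{\mathcal{G}}=\pi_{Q}^{*}(pK_{\mathcal{F}}+N_{\mathcal{F}})+pE=\pi_{Q}^{*}[\Delta_{\mathcal{F}}]+pE$ and $m_{Q}(\Delta_{\mathcal{F}})=0$ (because $Q\notin\mathcal{Z}(\Delta_{\mathcal{F}})$), we get $\pi_{Q}^{*}\Delta_{\mathcal{F}}=\widetilde{\Delta}_{\mathcal{F}}$, so $[\Delta_{\mathcal{G}}]\equiv \widetilde{\Delta}_{\mathcal{F}}+pE$. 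On the other hand $\Delta_{\mathcal{G}}$ and $\Delta_{\mathcal{F}}$ agree on $Bl_{Q}(\Projdois)\setminus E$, whence $\Delta_{\mathcal{G}}=\widetilde{\Delta}_{\mathcal{F}}+kE$ for some $k\geq 0$; intersecting $(k-p)E\equiv 0$ with $E$ (using $E^{2}=-1$) forces $k=p$. Thus $pE\leq \Delta_{\mathcal{G}}$ and $E$ is a $p$-factor of $\Delta_{\mathcal{G}}$.

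The main obstacle is getting the two sheaf-change formulas exactly right — recognizing that a regular point contributes $l(Q)=0$, so $N_{\mathcal{G}}=\pi_{Q}^{*}N_{\mathcal{F}}$, while $K_{\mathcal{G}}=\pi_{Q}^{*}K_{\mathcal{F}}+E$ — and then upgrading the numerical identity $[\Delta_{\mathcal{G}}]\equiv \widetilde{\Delta}_{\mathcal{F}}+pE$ to an equality of effective divisors via the isomorphism off $E$. The local model $\pi_{Q}^{*}\omega=t\,dx+x\,dt$ also explains geometrically why the new singularity created on $E$ has eigenvalue $-1\in\mathbb{F}_{p}$ and is therefore not $p$-reduced, consistent with $E$ occurring to order exactly $p$.
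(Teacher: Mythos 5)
Your proposal is correct and takes essentially the same route as the paper: $Q\notin\sing(\mathcal{F})$ gives $l(Q)=0$, hence $N_{\mathcal{G}}=\pi_{Q}^{*}N_{\mathcal{F}}$ and $K_{\mathcal{G}}=\pi_{Q}^{*}K_{\mathcal{F}}+E$, so $[\Delta_{\mathcal{G}}]=pK_{\mathcal{G}}+N_{\mathcal{G}}=\Tilde{\Delta}_{\mathcal{F}}+pE$ by $Q\notin\mathcal{Z}(\Delta_{\mathcal{F}})$, and the basis computation $M_{1}\equiv F+E$ turns $K_{\mathcal{G}}\equiv (d-1)F+dE$ into $-F+dM_{1}$, exactly as in the paper. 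Your additional steps — checking non-$p$-closedness of $\mathcal{G}$ on a chart away from $E$, and upgrading the numerical identity to an equality of effective divisors by comparing off $E$ and intersecting with $E$ — merely make explicit what the paper leaves implicit, and are sound.
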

\begin{proof} The condition $Q \not\in \sing(\mathcal{F})$ implies that $l(Q) = 0$. So, we have $N_{\mathcal{G}} = Bl_{Q}^{*}N_{\mathcal{F}}$ and $K_{\mathcal{G}} = Bl_{Q}^{*}K_{\mathcal{F}}+(1-l(Q))E = Bl_{Q}^{*}K_{\mathcal{F}}+E.$ So,
$
[\Delta_{\mathcal{G}}] = pK_{\mathcal{G}}+N_{\mathcal{G}} =  Bl_{Q}^{*}\Delta_{\mathcal{F}}+pE = \Tilde{\Delta}_{\mathcal{F}}+pE
$
where the last equality follows from the condition $Q \not\in \mathcal{Z}(\Delta_{\mathcal{F}})$. In particular, $\od_{E}(\Delta_{\mathcal{G}}) = p.$ Now, we have
$
K_{\mathcal{G}}= Bl_{Q}^{*}K_{\mathcal{F}}+E \equiv (d-1)(E+F)+E\equiv (d-1)F+dE.
$
Write $M_{1} \equiv aF+bE$. Since $1 = M_{1}\cdot F = b$ we have $M_{1} \equiv aF+E$ and since $1 = M_{1}^{2} = (aF+E)^{2} = 2a-1$ we obtain $a = 1$. So, $M_{1} = F+E$ and this implies $E = M_{1}-F$. So,
    $
        K_{\mathcal{G}} \equiv (d-1)F+dE \equiv (d-1)F+d(M_{1}-F) \equiv -F+dM_{1}
    $
which ends the proof of the proposition.
\end{proof}

\normalfont \noindent \textbf{Acknowledgements.} W.Mendson thanks J.V. Pereira for discussions about the results of this paper and for their remarks about this work. The author also thanks Eduardo Vital for suggestions and comments on the manuscript and also thanks Cesar Hilario for comments on the introduction. The author acknowledge support of CNPq, Faperj, and the Instituto de Matemática Pura e Aplica (IMPA).

\nocite{MR1440180}\nocite{MR3687427}\nocite{galindo2020foliations}\nocite{MR3328860}\nocite{mendson2022}

\nocite{MR1440180}\nocite{MR3687427}\nocite{galindo2020foliations}\nocite{MR3328860}

\bibliographystyle{siam}

\bibliography{annot}

\end{document}